\documentclass[11pt] {amsart}
\usepackage{amscd, amssymb}
\usepackage{graphics}

\theoremstyle{plain}

\numberwithin{equation}{section}

\newtheorem{theorem}[equation]{Theorem}

\newtheorem{proposition}[equation]{Proposition}

\newtheorem{lemma}[equation]{Lemma}
\newtheorem{corollary}[equation]{Corollary}
\theoremstyle{definition}
\newtheorem{remark}[equation]{Remark}
\newtheorem{example}[equation]{Example}
\newtheorem{definition}[equation]{Definition}

\def    \Hat  {\widehat}

\def    \Gh     {\widehat{G}}
\def    \Th     {\widehat{T}}

\newcommand{\printname}[1]

\def    \R  {{\Bbb R}}
\def    \Z  {{\Bbb Z}}

\def    \RP {{\Bbb {RP}}}
\def    \C  {{\Bbb C}}

\def    \Tilde  {\widetilde}
\begin{document}
\title[The fundamental group of $G$-manifolds]{The fundamental group of $G$-manifolds}
\author{Hui Li}
\address{School of mathematical Sciences, Box 173,
        Suzhou University, Suzhou, 215006, China.}
\email{hui.li@suda.edu.cn}

\thanks{2010
 MSC. Primary$\colon$  53D05, 53D20; Secondary$\colon$ 55Q05, 57R19.}
\keywords{Symplectic manifold, fundamental group, Hamiltonian group
action, moment map, symplectic quotient.}
\begin{abstract}
 Let $G$ be a connected compact Lie group, and
let $M$ be a connected Hamiltonian $G$-manifold with
equivariant moment map $\phi$.

 We prove that if there is a simply connected orbit $G\cdot x$, then
$\pi_1\left(M\right)\cong\pi_1\left(M/G\right)$; if additionally
$\phi$ is proper, then
$\pi_1\left(M\right)\cong\pi_1\left(\phi^{-1}(G\cdot a)\right)$,
where $a=\phi(x)$.

   We also prove that if a maximal torus of $G$ has a fixed point $x$,
 then $\pi_1\left(M\right)\cong\pi_1\left(M/K\right)$,  where $K$ is any
 connected subgroup of $G$;  if additionally
 $\phi$ is proper, then
 $\pi_1\left(M\right)\cong\pi_1\left(\phi^{-1}(G\cdot a)\right)\cong\pi_1\left(\phi^{-1}(a)\right)$,
 where $a=\phi(x)$.

 Furthermore, we prove that  if $\phi$ is proper, then $\pi_1\big(M/\Gh\big)\cong\pi_1\big(\phi^{-1}(G\cdot
a)/\Gh\big)$ for all $a\in\phi(M)$, where $\Gh$ is any connected
subgroup of $G$ which contains the identity component of each
stabilizer group. In particular,
$\pi_1\left(M/G\right)\cong\pi_1\left(\phi^{-1}(G\cdot a)/G \right)$
for all $a\in\phi(M)$.
\end{abstract}
 \maketitle
 \section{Introduction}

  Let $M$ be a smooth manifold. Let a connected compact Lie group $G$ act
  on $M$. We call $M$ a {\bf $G$-manifold}.

  Let $(M, \omega)$ be a symplectic manifold. Assume that a connected compact Lie group $G$
  acts on $M$  with  moment map  $\phi\colon M\rightarrow \mathfrak{g}^*$,
   where $\mathfrak{g}^*$ is the dual of the Lie algebra of $G$. In this case, we call $(M, \omega)$ a {\bf Hamiltonian $G$-manifold}.
 We will always assume that $\phi$ is equivariant with respect to the $G$ action,
 where $G$ acts on $\mathfrak{g}^*$ by the coadjoint action. Given a value
 $a$ in $\mathfrak{g}^*$, the space ${\bf M_{a}}=\phi^{-1}(G\cdot a)/G$ is called the {\bf symplectic quotient}
  or the {\bf reduced space} at the coadjoint orbit $G\cdot a$. If $G_a$ is the stabilizer group of $a$ under
  the coadjoint action, then we also have that $M_a = \phi^{-1}(a)/G_a$.

 In this paper, unless otherwise stated, $G$ always denotes a connected compact Lie group,
 $\mbox{im}(\phi)$ means the image of $\phi$, and
 {\bf fundamental group} always means the fundamental
 group of a space as a topological space.

 For a compact Hamiltonian $G$-manifold $M$,  we proved the following
 results, which combine Theorem 0.1 in \cite{L1} and Theorems 1.2, 1.3 and 1.6 in \cite{L2}.

  \begin{theorem}\label{thm1}
     Let $(M, \omega)$ be a connected compact Hamiltonian $G$-manifold
  with  moment map $\phi$, where $G$ is a connected compact
   Lie group. Then $\pi_1\left(M\right)\cong \pi_1\left(M/G\right)\cong\pi_1\left(M_a\right)$ for all
  $a\in \mbox{im}(\phi)$.
   \end{theorem}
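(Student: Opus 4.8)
The plan is to deduce every isomorphism in Theorem~\ref{thm1} from a single geometric fact about orbits, together with two standard transfer steps. I begin from the classical description of the fundamental group of a quotient by a connected compact group: if a connected compact Lie group $G$ acts on a connected manifold $X$, then $\pi_1(X)\to\pi_1(X/G)$ is surjective (lift a loop in $X/G$ to a path in $X$ using a Palais slice cover, then close it up inside an orbit, which is connected since $G$ is), and its kernel is generated, as a normal subgroup, by the images of the maps $\pi_1(G\cdot x)\to\pi_1(X)$ for $x\in X$. In particular $\pi_1(X)\cong\pi_1(X/G)$ as soon as the inclusion of every orbit into $X$ is trivial on $\pi_1$. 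So the real content to establish is the statement $(\star)$: every $G$-orbit in $M$ is $\pi_1$-trivially embedded in $M$.

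I would prove $(\star)$ first for a torus $G=T$; this is where compactness and the Hamiltonian hypothesis enter. A $T$-orbit is a quotient torus, and $\pi_1(T\cdot x)$ is generated by the classes of circle orbits $S^1_\eta\cdot x$, where $S^1_\eta=\exp(\R\eta)$ ranges over the rational directions $\eta\in\mathfrak t$. For a fixed such $\eta$, the circle $S^1_\eta$ acts on $M$ in a Hamiltonian way with moment map the component $\phi^\eta=\langle\phi,\eta\rangle$, which is a Morse--Bott function on the compact manifold $M$. Choosing an $S^1_\eta$-invariant compatible metric and running the negative gradient flow of $\phi^\eta$ carries $x$ to a point $x_\infty$ in the critical submanifold where $\phi^\eta$ is minimal; since $x_\infty$ is fixed by $S^1_\eta$ and the flow is $S^1_\eta$-equivariant, the flow carries the whole circle $S^1_\eta\cdot x$ to $S^1_\eta\cdot x_\infty=\{x_\infty\}$. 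This is an explicit null-homotopy in $M$ of the loop $S^1_\eta\cdot x$, which proves $(\star)$ for tori and hence $\pi_1(M)\cong\pi_1(M/T)$. To pass to a general $G$ I would use that, since $G/T$ is simply connected, $\pi_1(T)\to\pi_1(G)$ is onto; combined with the fact that every element of $G$ (in particular every element of a stabilizer $G_x$) lies in some maximal torus, this shows that the image of $\pi_1(G\cdot x)\to\pi_1(M)$ is generated by classes each represented by a loop lying inside an orbit of a conjugate of $T$, and these all die in $\pi_1(M)$ by the torus case; so $(\star)$, and therefore $\pi_1(M)\cong\pi_1(M/G)$, holds for $G$. (Alternatively, the Guillemin--Sternberg symplectic cross-section over the interior of a Weyl chamber identifies $M/G$ with the quotient of a Hamiltonian $T$-manifold and lets one invoke the torus case directly.)

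For the reduced spaces I would use the shifting trick. Given $a\in\mbox{im}(\phi)$, let $\mathcal O$ be the coadjoint orbit through $-a$ equipped with its Kirillov--Kostant--Souriau form, with moment map the inclusion $\mathcal O\hookrightarrow\mathfrak g^*$. Then $M\times\mathcal O$ is a compact Hamiltonian $G$-manifold with moment map $\psi(m,\nu)=\phi(m)+\nu$, one has $0\in\mbox{im}(\psi)$, the assignment $(m,\phi(m))\mapsto m$ is a $G$-equivariant homeomorphism from $\psi^{-1}(0)$ onto $\phi^{-1}(G\cdot a)$, and $(M\times\mathcal O)/\!/G\cong M_a$. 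Two inputs then finish the job. First, coadjoint orbits of compact connected groups are generalized flag manifolds and hence simply connected, so $\pi_1(M\times\mathcal O)=\pi_1(M)$. Second, for a compact Hamiltonian $G$-manifold $X$ with moment map $\psi$ and $0\in\mbox{im}(\psi)$ one has $\pi_1(X)\cong\pi_1(\psi^{-1}(0))$: this I would obtain from Kirwan's analysis of $|\psi|^2$ as a minimally degenerate Morse function whose minimum stratum is an open neighbourhood of $\psi^{-1}(0)$ and whose higher strata have positive even codimension, so that $X$ is built from such a neighbourhood by attaching pieces along which $\pi_1$ does not change once the relevant attaching loops are trivialized by the same circle-orbit contraction as above; applying the orbit-triviality $(\star)$, now inside $\psi^{-1}(0)$, then also gives $\pi_1(\psi^{-1}(0))\cong\pi_1(\psi^{-1}(0)/G)$. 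Concatenating, $\pi_1(M)=\pi_1(M\times\mathcal O)\cong\pi_1(\psi^{-1}(0))\cong\pi_1\big(\psi^{-1}(0)/G\big)\cong\pi_1(M_a)$, which together with $\pi_1(M)\cong\pi_1(M/G)$ from the previous step establishes Theorem~\ref{thm1}.

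The step I expect to be the main obstacle is the control of $\pi_1$ under the attachment of \emph{codimension-two} pieces --- both when crossing the singular levels of $|\psi|^2$ and, implicitly, in the passage to a non-free quotient --- since attaching a $2$-cell can in general kill part of $\pi_1$. Showing rigorously that all of the relevant attaching circles are already null-homotopic in the appropriate subspace, via the equivariant gradient-flow contraction of circle orbits, is the crux; it is also what makes it necessary to dispose of the torus case first and to treat the components of the stabilizer groups with extra care.
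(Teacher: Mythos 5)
Your reduction of the theorem to the statement $(\star)$ (every loop lying in an orbit dies in $\pi_1(M)$) is sound in outline --- it is essentially Lemma~\ref{liftpoint} and Proposition~\ref{propM=M/G} of the paper --- but your proof of $(\star)$ fails at the decisive point. When the stabilizer $T_x$ is disconnected, $\pi_1(T\cdot x)$ is \emph{not} generated by the classes of one-parameter circle orbits $S^1_\eta\cdot x$: for $T=S^1$ and $T_x=\Z_k$ the orbit map $t\mapsto e^{2\pi i t}\cdot x$ traverses the embedded orbit circle $k$ times, so your gradient-flow contraction only kills $k$ times the generator of $\pi_1(T\cdot x)$, never the generator itself. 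This is not a technicality but the crux of the whole theorem: in Example~\ref{ex2} ($\RP^2$ with the rotation action) every full circle orbit is null-homotopic while the loop generating $\pi_1$ of the $\Z_2$-orbit is not, so no argument that contracts only honest circle-subgroup orbits can prove $(\star)$. You must kill the ``fractional'' loops $t\mapsto\exp(t\xi)\cdot x$ with $\exp(\xi)\in T_x\setminus T_x^0$ as well; this is where the Hamiltonian structure has to be used more seriously, either as in the paper's Lemmas~\ref{Spi=} and \ref{Gpi=} (local normal form, the $\C/\Z_k\cong\C$ argument), or by noting that the negative gradient flow of $\langle\phi,\xi\rangle$ for a $T$-invariant metric is $T$-equivariant, hence deforms such a fractional loop through loops to a constant loop at a limit point fixed by $\overline{\exp(\R\xi)}$. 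The same gap propagates to your nonabelian step, since the generators coming from $\pi_0(G_x)$ are exactly such fractional loops inside maximal-torus orbits.

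The second half of your argument is wrong as stated, not merely incomplete. For a compact Hamiltonian $G$-manifold with $0\in\mathrm{im}(\psi)$ it is false in general that $\pi_1(X)\cong\pi_1\left(\psi^{-1}(0)\right)$, and also false that $\pi_1\left(\psi^{-1}(0)\right)\cong\pi_1\left(\psi^{-1}(0)/G\right)$: take $X=S^2$ with the rotation action and the height moment map with the equator at level $0$; then $\pi_1(X)=1$, $\pi_1\left(\psi^{-1}(0)\right)\cong\Z$, and $\pi_1\left(\psi^{-1}(0)/S^1\right)=1$. Only the composite statement $\pi_1(X)\cong\pi_1\left(\psi^{-1}(0)/G\right)$ is true, and your proposed mechanism cannot repair the intermediate links: the orbit contraction uses a gradient flow that leaves $\psi^{-1}(0)$ and the minimal Kirwan stratum, so the attaching circles of the codimension-two strata are in general \emph{not} null-homotopic in $\psi^{-1}(0)$ --- in the $S^2$ example $\pi_1$ genuinely drops when the higher strata are attached. (This is also why the theorems of this paper assert $\pi_1(M)\cong\pi_1\left(\phi^{-1}(G\cdot a)\right)$ only at the special value $a=\phi(x)$ carrying a simply connected orbit.) After the shifting trick you therefore need an honest substitute for this step, e.g.\ Kirwan's theorem that reduction at $0$ preserves $\pi_1$, or the route the paper takes: identify $\pi_1(M)$ with $\pi_1(M_b)$ at an extremal vertex $b$ of $\phi(M)\cap\mathfrak t^*_+$, where $\phi^{-1}(b)$ is fixed by $G_b\supset T$ and is the minimum of a circle component of the moment map, and then propagate $\pi_1(M_a)\cong\pi_1(M_{a'})$ across nearby values and prove $\pi_1(M/G)\cong\pi_1(M_c)$ by local normal forms, equivariant deformation retractions, and removal of quotient strata with simply connected links.
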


  Here is a counter example to Theorem~\ref{thm1} when $M$  is not compact.
\begin{example}\label{ex}
 Let $M=S^1\times\R$, and let $S^1$ rotate the first factor; then  the
  moment map is the projection to the second factor $\R$. We have that
  $\pi_1\left(M/S^1\right)\cong\pi_1\left(M_a\right)\ncong\pi_1\left(M\right)$
 for all $a\in \mbox{im}(\phi)$.
\end{example}

 In this paper, we study the fundamental group of noncompact Hamiltonian
 $G$-manifolds. We assume that the moment map is proper. A map is {\bf proper}
  if the inverse image of each compact set is compact.
  Clearly, the moment map of a compact Hamiltonian $G$-manifold is
  proper. The study of the noncompact
  case, which requires new approaches, makes the reasons for the isomorphisms of the fundamental
  groups more clear.

  Let $M$ be a Hamiltonian $G$-manifold. If $M$ is compact, the
  components of the moment map have minima and maxima.
  These allowed us to associate the fundamental group of $M$
  to that of a symplectic quotient at an extremal value (see the outline of proof for more
  detail.). Without assuming that $M$ is compact, we study the induced map by the quotient
 $\pi_1\left(M\right)\to\pi_1\left(M/G\right)$ and obtain the
 following two theorems. In particular, we are able to show
 that this map is an isomorphism if there exists a simply
 connected orbit (compare with Example~\ref{ex}). While for
 compact $M$, the extrema of the components of the moment map provide a
 lot of simply connected orbits.

\begin{theorem}\label{M=M/G}
  Let $(M, \omega)$ be a connected Hamiltonian $G$-manifold with an equivariant moment map $\phi$, where $G$
   is a connected compact Lie group.
 \begin{itemize}
 \item [(A)] If there is a simply connected orbit, then
     $\pi_1\left(M\right)\cong\pi_1\left(M/G\right).$
 \item [(B)] If $\phi$ is proper and if there is a simply
     connected orbit $G\cdot x$, then
 $\pi_1\left(M\right)\cong\pi_1\left(\phi^{-1}(G\cdot
 a)\right)$,  where $a = \phi(x)$.
\end{itemize}
 \end{theorem}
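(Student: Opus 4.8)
\emph{Proof proposal.} The plan is to prove (A) by lifting the $G$-action to the universal cover $p\colon\widetilde M\to M$ and recognizing $\widetilde M/G$ as the universal cover of $M/G$, and then to obtain (B) by a deformation argument for the norm square of the moment map. First observe that $\pi_*\colon\pi_1(M)\to\pi_1(M/G)$ is surjective: given a loop $\bar\gamma$ in $M/G$, lift it to a path $\gamma$ in $M$ (the quotient map is open and admits local sections by the slice theorem), note that $\gamma(0)$ and $\gamma(1)$ lie in one $G$-orbit, and — since $G$ is connected — join them by a path inside that orbit; the resulting loop in $M$ maps to a loop freely homotopic to $\bar\gamma$. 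This uses neither the symplectic form nor the simply connected orbit, so the content of (A) is the \emph{injectivity} of $\pi_*$.

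Next, the simply connected orbit enters: the orbit map $G\to M$, $g\mapsto g\cdot x$, factors through $G\cdot x$, hence $\pi_1(G)\to\pi_1(M)$ is the zero map, and since $M$ is connected the same holds for the orbit map through any point. By the lifting criterion the action lifts to an action of $G$ on $\widetilde M$ making $p$ equivariant, and by uniqueness of lifts the deck group $\Gamma=\pi_1(M)$ acts on $\widetilde M$ commuting with $G$; then $\widetilde M$ is a Hamiltonian $G$-manifold with moment map $\phi\circ p$, and $\widetilde M/G$ carries a $\Gamma$-action with $(\widetilde M/G)/\Gamma=M/G$. Applying the surjectivity statement to $\widetilde M$ gives $\pi_1(\widetilde M/G)=1$, so it will suffice to show that $\Gamma$ acts freely on $\widetilde M/G$: then $\widetilde M/G\to M/G$ is the universal covering and $\pi_*$ is an isomorphism (one may also phrase this via Armstrong's theorem on fundamental groups of orbit spaces).

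The freeness of the $\Gamma$-action on $\widetilde M/G$ is the heart of the argument. For $\widetilde x\in p^{-1}(x)$ the restriction $p\colon G\cdot\widetilde x\to G\cdot x$ is a covering of a simply connected space with connected total space, hence a homeomorphism; thus if $\sigma\in\Gamma$ preserves $G\cdot\widetilde x$ it restricts to a deck transformation of a trivial covering, i.e. to the identity on $G\cdot\widetilde x$, and $\sigma=1$ because $\Gamma$ acts freely on $\widetilde M$. In other words $\Gamma$ acts freely on the fibre of $\widetilde M/G\to M/G$ over the point represented by $G\cdot x$. The main obstacle is to propagate this to all of $\widetilde M/G$: one must rule out that some $\sigma\neq 1$ fixes a point of $\widetilde M/G$ while fixing no point of that distinguished fibre. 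Here I expect the moment map to do the work: $\phi\circ p$ descends to $\widetilde M/G$, the $\Gamma$-action preserves it, and, using the Marle--Guillemin--Sternberg slice theorem together with connectedness properties of moment map fibres (in the proper case, the downward gradient flow of $\|\phi\|^2$), one connects an arbitrary $G$-orbit to one lying over $G\cdot x$ through a continuous family of orbits along which the $\sigma$-action can be tracked, forcing $\sigma$ to preserve some $G$-orbit over $G\cdot x$ — a contradiction unless $\sigma=1$. Together with surjectivity this proves (A).

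For (B), assume $\phi$ proper and put $a=\phi(x)$. Fix a $G$-invariant inner product on $\mathfrak g^*$ and set $\psi(m)=\tfrac12\operatorname{dist}(\phi(m),G\cdot a)^2$, a proper, non-negative, $G$-invariant function with $\psi^{-1}(0)=\phi^{-1}(G\cdot a)$. By the Kirwan--Ness analysis of the norm square of a moment map, $\psi$ is minimally degenerate and its critical set away from $\psi^{-1}(0)$ is a union of $G$-invariant submanifolds of positive even codimension; hence the stable set of $\psi^{-1}(0)$ under the (completable) downward gradient flow of $\psi$ is open, its complement has codimension at least $2$, and it deformation retracts $G$-equivariantly onto $\phi^{-1}(G\cdot a)$. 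Therefore $\pi_1(M)\cong\pi_1(\phi^{-1}(G\cdot a))$. (Alternatively, with (A) in hand one applies the same circle of ideas to the compact-group space $\phi^{-1}(G\cdot a)$, which contains the simply connected orbit $G\cdot x$, and to $M/G$, and then composes the resulting isomorphisms through $M_a=\phi^{-1}(G\cdot a)/G$.) The routine steps are surjectivity, the lifting of the action, and the formal deductions; the genuinely delicate point, where all the hypotheses are used, is the propagation of freeness in the third paragraph.
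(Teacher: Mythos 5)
Your reduction of (A) is legitimate as far as it goes: lifting the $G$-action to the universal cover $\widetilde M$ (possible because the evaluation map kills $\pi_1(G)$ thanks to the simply connected orbit) and observing that injectivity of $\pi_1(M)\to\pi_1(M/G)$ is equivalent to freeness of the deck group $\Gamma$ on $\widetilde M/G$ is a correct reformulation; freeness of $\Gamma$ on $\widetilde M/G$ is in fact equivalent to the statement that every loop contained in a single $G$-orbit is null-homotopic in $M$, which is exactly the paper's key Lemma~\ref{Gpi=} (fed into the criterion of Lemma~\ref{liftpoint}). But you do not prove this step; you only say you ``expect the moment map to do the work.'' That step is the entire content of the theorem: Example~\ref{ex2} (the rotation action on $\RP^2$, which has a fixed point, hence a simply connected orbit) shows the statement fails for general $G$-manifolds, so the Hamiltonian structure must enter precisely here, and your sketch invokes tools (negative gradient flow of the norm square, connectedness of fibres) that require properness of $\phi$, which is not assumed in (A). The paper's mechanism is different and unavoidable in some form: by the local normal form, a neighborhood of an exceptional circle orbit is $S^1\times_{\Z_k}(\R\times V)$ and $\C/\Z_k\cong\C$, so in a Hamiltonian $S^1$-manifold every orbit is homotopic to a principal orbit (Lemma~\ref{Spi=}); the nonabelian case is reduced to circle subgroups using Proposition~\ref{G/T} and the fibration $\mathcal O\to G/T$. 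Nothing in your outline substitutes for this, so (A) has a genuine gap at its central point.

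Part (B) as your main route stands is wrong, not merely incomplete. First, a complement of codimension $2$ only gives a surjection $\pi_1(M-Z)\to\pi_1(M)$, not an isomorphism (one needs simply connected links, i.e.\ codimension at least $3$; compare Lemma~\ref{remove:str}), so even granting the Kirwan--Ness claims the conclusion does not follow. Second, the statement your route would prove makes no use of $a$ being the image of a simply connected orbit, and that statement is false: for $M=\C$ with the standard rotation, $\phi=|z|^2/2$ and $a>0$, one has $\pi_1(\C)=1$ while $\pi_1(\phi^{-1}(a))=\Z$; here the stable set of $\psi^{-1}(0)$ is $\C\setminus\{0\}$, whose complement has codimension exactly $2$, which is precisely how the argument breaks. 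Your parenthetical alternative is essentially the paper's actual proof, but it silently assumes the isomorphism $\pi_1(M/G)\cong\pi_1(M_a)$, which is Theorem~\ref{hatG} --- a substantial result proved by a separate analysis of links of strata in the quotient stratified spaces, cross sections, and an inductive removal process --- and also needs the identification $\pi_1(\phi^{-1}(G\cdot a))\cong\pi_1(M_a)$, which the paper obtains (Lemma~\ref{levelset}) by retracting an invariant neighborhood onto $\phi^{-1}(G\cdot a)$ and applying (A) to that neighborhood, since the level set itself is not a manifold. As written, both halves of your proposal are missing their decisive ingredient.
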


\begin{theorem}\label{M=M/G+}
   Let $(M, \omega)$ be a connected Hamiltonian $G$-manifold with an equivariant moment map $\phi$, where $G$
   is a connected compact Lie group.
  \begin{itemize}
  \item [(A)] If a maximal torus of $G$ has a fixed point,
      then $\pi_1\left(M\right)\cong\pi_1\left(M/K\right)$,
      where $K$ is any connected subgroup of $G$.
  \item [(B)] If $\phi$ is proper and if a maximal torus of
      $G$ has a fixed point $x$, then
      $\pi_1\left(M\right)\cong\pi_1\left(\phi^{-1}(G\cdot
    a)\right)\cong\pi_1\left(\phi^{-1}(a)\right)$,  where
    $a = \phi(x)$.
  \end{itemize}
  \end{theorem}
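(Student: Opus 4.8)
The plan is to derive Theorem~\ref{M=M/G+} from Theorem~\ref{M=M/G}, the bridge being the observation that a point fixed by a maximal torus always lies on a simply connected orbit. Concretely, I would first record: if a connected compact Lie group $H$ with maximal torus $T_H$ acts on a manifold and $y$ is fixed by $T_H$, then $T_H\subseteq H_y$, so $H_y$ is a closed subgroup of full rank; since $\pi_1(T_H)\to\pi_1(H)$ is surjective for any compact connected $H$ and factors through $\pi_1(H_y)$, the map $\pi_1(H_y)\to\pi_1(H)$ is surjective, whence in the homotopy exact sequence of $H_y\to H\to H/H_y$ the map $\pi_1(H)\to\pi_1(H/H_y)$ vanishes and $\pi_1(H\cdot y)\cong\pi_0(H_y)$. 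For a point fixed by a maximal torus the isotropy Lie algebra is a Levi subalgebra, and one checks that $H_y$ is itself connected, so $H\cdot y$ is simply connected.

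For part (A): given a connected subgroup $K\le G$ with maximal torus $T_K$, conjugacy of maximal tori of $G$ gives $g\in G$ with $gT_Kg^{-1}\subseteq T$, so $T_K$ fixes the $g^{-1}$-translate of the given $T$-fixed point. Since $M$ is also a connected Hamiltonian $K$-manifold --- with $K$-equivariant moment map obtained by composing $\phi$ with the restriction $\mathfrak{g}^*\to\mathfrak{k}^*$ --- the observation above furnishes a simply connected $K$-orbit, and Theorem~\ref{M=M/G}(A) applied to the $K$-action gives $\pi_1(M)\cong\pi_1(M/K)$.

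For part (B): taking $K=G$ above, $G\cdot x$ is a simply connected orbit with $\phi(x)=a$, so by properness of $\phi$ and Theorem~\ref{M=M/G}(B) we get $\pi_1(M)\cong\pi_1(\phi^{-1}(G\cdot a))$. For the remaining isomorphism I would use that equivariance of $\phi$ identifies $\phi^{-1}(G\cdot a)$ with the associated bundle $G\times_{G_a}\phi^{-1}(a)$ over the coadjoint orbit $G/G_a$ with fiber $\phi^{-1}(a)$, where $G_a$ is the coadjoint stabilizer of $a$; since $G_a$ is the centralizer of a torus it is connected of full rank, so $G/G_a$ is simply connected and $\pi_1(\phi^{-1}(a))\to\pi_1(\phi^{-1}(G\cdot a))$ is surjective. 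It is injective once the connecting homomorphism $\pi_2(G/G_a)\to\pi_1(\phi^{-1}(a))$ is trivial; taking $x\in\phi^{-1}(a)$ as basepoint, this map factors as the connecting homomorphism $\partial\colon\pi_2(G/G_a)\to\pi_1(G_a)$ of the principal bundle $G\to G/G_a$ followed by the orbit map $(\mu_x)_*\colon\pi_1(G_a)\to\pi_1(\phi^{-1}(a))$, $\mu_x(g)=g\cdot x$. As $\mu_x$ factors through $G_a/(G_a)_x$ and $(G_a)_x\supseteq T$ (since $T$ fixes both $x$ and $a$), the argument of the first paragraph shows $\pi_1(G_a)\to\pi_1(G_a/(G_a)_x)$ is zero, so $(\mu_x)_*=0$; hence $\pi_1(\phi^{-1}(a))\cong\pi_1(\phi^{-1}(G\cdot a))$, giving the chain of isomorphisms in (B).

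The hard part will be the connectedness assertion in the first paragraph: the clean reduction to Theorem~\ref{M=M/G} needs the isotropy group of a torus-fixed point to be connected --- equivalently, the orbit to be simply connected rather than merely to have $\pi_1$ equal to $\pi_0$ of the isotropy --- and this is where the symplectic structure at a torus-fixed point must genuinely be brought in. By contrast, part (B) sidesteps this, since there one only needs $\pi_1((G_a)_x)\to\pi_1(G_a)$ to be surjective, which holds just because $(G_a)_x$ contains a maximal torus. A minor technical point: $\phi^{-1}(a)$ may be singular, so the homotopy exact sequence and the connecting-map computation should be carried out on the fiber bundle over the smooth base $G/G_a$, which is a Serre fibration.
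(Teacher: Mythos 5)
Your overall architecture matches the paper's: part (A) is reduced, exactly as in the paper, to Theorem~\ref{M=M/G}(A) for the $K$-action once one knows that a point fixed by a maximal torus lies on a simply connected orbit, and part (B) is reduced to Theorem~\ref{M=M/G}(B); your proof of $\pi_1\left(\phi^{-1}(a)\right)\cong\pi_1\left(\phi^{-1}(G\cdot a)\right)$ via the bundle $G\times_{G_a}\phi^{-1}(a)\to G/G_a$ and the vanishing of the connecting homomorphism is a legitimate alternative to the paper's route through the cross section (Lemma~\ref{xaa}), and it only uses that $(G_a)_x$ has full rank. The genuine gap is the step you yourself flag: you never prove that the stabilizer $H_y$ of a torus-fixed point is connected, equivalently (by your own exact-sequence computation $\pi_1(H\cdot y)\cong\pi_0(H_y)$) that the orbit is simply connected. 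This is precisely the content of the paper's Proposition~\ref{G/T}, which is proved there with no connectedness hypothesis: the connected case is your fibration argument, and the disconnected case is handled by Armstrong's theorem (Theorem~\ref{thmarm}) applied to the finite group $H/H^0$ acting on $G/H^0$. Since both of your parts (A) and (B) feed on this input (you need a simply connected $G$-orbit even to invoke Theorem~\ref{M=M/G}(B)), the proposal is incomplete exactly at the step that carries the theorem.

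Moreover, the way you propose to close the gap cannot work: connectedness of $H_y$ is false in general, even for Hamiltonian actions with proper moment map. Take $M=T^*\RP^2$ with the cotangent lift of the $SO(3)$-action on $\RP^2$: the zero-section point $x=([\ell],0)$ is fixed by the maximal torus of rotations about the line $\ell$, yet its stabilizer is $O(2)$ and its orbit is the zero section $\RP^2$, so by your computation $\pi_1(G\cdot x)\cong\pi_0\left(O(2)\right)=\Z_2\neq 1$; no local symplectic argument at $x$ will restore connectedness. You should also note that this example sits uneasily with the disconnected case of Proposition~\ref{G/T} itself: the identification $G/H=\left(G/H^0\right)/(H/H^0)$ is via the right-translation action of $H/H^0$ on $G/H^0$, which is free (the fixed points produced in the paper's proof are fixed points of left translation), so Armstrong's theorem applied to the actual deck action gives $\pi_1\left(G/H\right)\cong H/H^0$, in agreement with your exact sequence; and in the example above $\pi_1(M)=\Z_2$ while $M/SO(3)\cong\R^2/O(2)$ is simply connected, so the conclusion of part (A) with $K=G$ fails there. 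The upshot is that both your argument and the statement need an additional hypothesis guaranteeing a simply connected orbit (for instance that the torus-fixed point has connected stabilizer); under such a hypothesis your proposal becomes complete and is essentially the paper's proof.
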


 Next, we consider the fundamental groups of the different
 quotients.  If $M$ is compact, a maximal
 torus of $G$ has to have a fixed point; so $G$ is the only subgroup  which contains all the stabilizer groups.
 If $M$ is not compact, as Example~\ref{ex}
  shows, the above fact may not be true.  Our next result extends the
  statement that $\pi_1\left(M/G\right)\cong\pi_1\left(M_a\right)$ for all $a\in
\mbox{im}(\phi)$ for compact manifolds to non-compact ones with
proper moment maps.

 \begin{theorem}\label{hatG}
      Let $(M, \omega)$ be a connected Hamiltonian $G$-manifold
      with a proper equivariant moment map $\phi$, where $G$
   is a connected compact Lie group.
Let $\Gh$ be any connected subgroup of $G$ which contains the
identity component of each stabilizer group. Then
$\pi_1\big(M/\Gh\big)\cong\pi_1\big(\phi^{-1}(G\cdot
a)/\Gh\big)$ for all $a\in\mbox{im}(\phi).$ In particular,
$\pi_1\big(M/G\big)\cong\pi_1\big(M_a\big)$ for all
$a\in\mbox{im}(\phi).$
\end{theorem}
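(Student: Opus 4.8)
The natural candidate for the isomorphism is the homomorphism $j_*\colon\pi_1\big(\phi^{-1}(G\cdot a)/\Gh\big)\to\pi_1\big(M/\Gh\big)$ induced by the inclusion $\phi^{-1}(G\cdot a)\hookrightarrow M$, and the plan is to prove that $j_*$ is an isomorphism. The overall strategy mirrors the compact case of \cite{L1,L2}: realize $\phi^{-1}(G\cdot a)$ as the minimum set of a $G$-invariant exhaustion function on $M$ and deformation retract $M$ onto a neighborhood of that set along the negative gradient flow of the function. The point of the present statement is to carry this out using only properness of $\phi$ in place of compactness of $M$, and for an arbitrary connected subgroup $\Gh$ containing the identity components of all stabilizers rather than for $G$ itself.

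First I would reduce to the value $0$. Fix a $G$-invariant inner product on $\mathfrak g^*$ and a $G$-invariant Riemannian metric on $M$ compatible with $\omega$. By the shifting trick, pass to $\Tilde M=M\times(G\cdot a)^{-}$, the product with the coadjoint orbit through $a$ carrying the opposite of its Kirillov--Kostant--Souriau form, with the diagonal $G$-action and moment map $\Tilde\phi(m,\xi)=\phi(m)-\xi$: then $\Tilde\phi^{-1}(0)$ is $G$-equivariantly homeomorphic to $\phi^{-1}(G\cdot a)$, the map $\Tilde\phi$ is proper since $G\cdot a$ is compact, and $\Gh$ contains the identity component of every $G$-stabilizer on $\Tilde M$ because those lie inside the stabilizers on the $M$-factor. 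Since $G\cdot a$ is compact and simply connected, the projection $\Tilde M/\Gh\to M/\Gh$ induces an isomorphism on $\pi_1$; combining this with the homeomorphism $\Tilde\phi^{-1}(0)/\Gh\cong\phi^{-1}(G\cdot a)/\Gh$, it suffices to prove the theorem for $\Tilde M$ at the value $0$. So from now on assume $0\in\mbox{im}(\phi)$, and aim to show that the inclusion induces $\pi_1\big(\phi^{-1}(0)/\Gh\big)\cong\pi_1\big(M/\Gh\big)$.

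Next I would take $f=\|\phi\|^2$, a $G$-invariant proper function on $M$ whose minimum set is $f^{-1}(0)=\phi^{-1}(0)$. By the usual analysis of the norm square of a moment map — and here properness of $\phi$ is exactly what keeps the stratification locally finite and makes the negative gradient flow of $f$ converge — one gets a $G$-invariant decomposition $M=\bigsqcup_\beta S_\beta$ in which $S_0$, the set of points flowing into $\phi^{-1}(0)$, is open and admits a $G$-equivariant deformation retraction onto $\phi^{-1}(0)$, while each $S_\beta$ with $\beta\neq 0$ is a $G$-invariant submanifold of positive even codimension in $M$. All of this is $G$-invariant, hence descends to the quotient by $\Gh$: we obtain $M/\Gh=\bigsqcup_\beta S_\beta/\Gh$ with $S_0/\Gh$ open and deformation retracting onto $\phi^{-1}(0)/\Gh$, and with each $S_\beta/\Gh$ $(\beta\neq 0)$ a closed subset of positive codimension in the stratified space $M/\Gh$, the codimension being read stratum by stratum. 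Building $M/\Gh$ up from $S_0/\Gh$ by adjoining the $S_\beta/\Gh$ in order of increasing critical value of $f$, and using that adjoining a closed stratum of codimension $\geq 3$ does not change $\pi_1$, that adjoining one of codimension $1$ is a boundary-collar step and also does not change $\pi_1$, while adjoining one of codimension $2$ can only kill elements of $\pi_1$, one concludes that $j_*$ is surjective and that the injectivity of $j_*$ comes down to the strata of codimension exactly $2$ in $M/\Gh$.

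The main obstacle is precisely this last point: showing that a codimension-$2$ stratum $S_\beta/\Gh$ is transparent to $\pi_1$. As in \cite{L1,L2}, one would handle it through a local model — near such a stratum $f$ is controlled by its normal Hessian, whose negative part has rank $2$ — to show that the meridian loop around $S_\beta/\Gh$ bounds a disk inside $S_0/\Gh$, so that $S_0/\Gh\hookrightarrow(S_0\cup S_\beta)/\Gh$ is an isomorphism on $\pi_1$; the induction then closes and yields $\pi_1\big(\phi^{-1}(0)/\Gh\big)\cong\pi_1\big(M/\Gh\big)$. Two further technical points, each needing the noncompact hypotheses to be treated with care, are: (i) the interaction of the gradient-flow stratification with the possibly non-free $\Gh$-action, so that the codimension assertions really do hold in the stratified quotient; and (ii) the verification, used in the reduction, that adjoining the simply connected compact factor $G\cdot a$ via the shifting trick changes none of the fundamental groups involved.
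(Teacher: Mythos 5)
Your strategy (shifting trick plus the Kirwan stratification of $\|\phi\|^2$) is genuinely different from the paper's, which never runs the norm-square flow globally: the paper instead moves across the moment image, using local normal forms and cross sections to compute the links of the strata of the $\Gh$-quotients, Armstrong's theorem to show those links are simply connected, Lemma~\ref{remove:str} to remove them, and Theorem~\ref{retract} to deform. As written, however, your argument has gaps exactly where that machinery is needed. The decisive one: in a stratified quotient $M/\Gh$ the invariant that governs whether adjoining or removing a stratum changes $\pi_1$ is its \emph{link}, not its codimension, and ``codimension read stratum by stratum'' proves nothing. A piece of a Kirwan stratum $S_\beta$ along which a finite stabilizer $\Gamma\nsubseteq\Gh$ acts could a priori have link $S^{2k-1}/\Gamma$ in $M/\Gh$ --- a lens space, not simply connected, even though the codimension is $2k\geq 4$. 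Ruling out such links for $\Gh$-quotients (rather than $G$-quotients) is precisely the content of Lemmas~\ref{Tlinksame}, \ref{Tlink0}, \ref{Glinksame} and \ref{Glink0}, which use the symplectic normal form, the cross section theorem, and Armstrong's theorem (in particular the fact that, after dividing by the identity component of the stabilizer, each residual finite element still has a fixed point); you would need the analogous link analysis for the $\Gh$-images of the strata $S_\beta$, and your sketch does not supply it. On top of this, the codimension-two case --- the only place where injectivity of $j_*$ could fail --- is exactly the step you leave open (``show the meridian bounds a disk in $S_0/\Gh$''); that is the heart of the proof, not a finishing touch, and the remark about the rank-two negative Hessian does not yield it.

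A second unproved step is the shifting-trick reduction: the assertion that $\Tilde M/\Gh\to M/\Gh$ induces an isomorphism on $\pi_1$ because $G\cdot a$ is compact and simply connected does not follow from anything stated. This map is not a fibration; its ``fibers'' are the varying quotients $(G\cdot a)/(\Gh\cap G_m)$, and a claim of this type is of essentially the same nature and difficulty as the theorem itself, so it cannot be used as a free reduction. (By contrast, the paper avoids shifting altogether and compares quotients at different coadjoint orbits directly, via Lemmas~\ref{Tisom'}, \ref{Gisom} and \ref{Gisom'}.) Two smaller points: the smoothness and even positive codimension of the Kirwan strata, and the convergence and continuity of the retraction of $S_0$ onto $\phi^{-1}(0)$ in the noncompact proper setting, need citation rather than assertion; and the codimension-one ``collar'' case you allow for does not actually occur for $\|\phi\|^2$. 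In summary: the route is legitimately different and could conceivably be completed, but the two central claims --- simple connectivity of the links of the quotient strata (equivalently, harmlessness of the codimension-two strata) and the $\pi_1$-invariance under the shifting trick --- are precisely the hard content, and they are missing.
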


When $\mbox{im} (\phi)$ consists of regular values,  all the stabilizer groups are finite.
We have the following corollary of Theorem~\ref{hatG}.

 \begin{corollary}\label{regular}
      Let $(M, \omega)$ be a connected Hamiltonian $G$-manifold
      with a proper equivariant moment map $\phi$, where $G$
   is a connected compact Lie group. If $\mbox{im} (\phi)$ consists of regular
   values, then $\pi_1(M)\cong\pi_1\left(\phi^{-1}(G\cdot a)\right)$ for all $a\in \mbox{im}(\phi).$
   \end{corollary}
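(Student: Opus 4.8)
The plan is to obtain this as the special case $\Gh=\{e\}$ of Theorem~\ref{hatG}; the only work is to justify that the trivial subgroup is an admissible choice of $\Gh$, which reduces to checking that the hypothesis ``$\mbox{im}(\phi)$ consists of regular values'' forces every stabilizer group to be finite --- the remark made just before the corollary.

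First I would recall the standard identification, for a Hamiltonian $G$-manifold, of the image of $d\phi_x\colon T_xM\to\mathfrak{g}^*$ with the annihilator $\mathfrak{g}_x^{\circ}$ of the Lie algebra $\mathfrak{g}_x$ of the stabilizer $G_x$: this follows from $\langle d\phi_x(v),\xi\rangle=\omega_x\big(\xi_M(x),v\big)$ together with the fact that $\xi_M(x)=0$ precisely when $\xi\in\mathfrak{g}_x$ and the nondegeneracy of $\omega$. Hence $a\in\mbox{im}(\phi)$ being a regular value of $\phi$ is equivalent to $\mathfrak{g}_y=0$ for every $y\in\phi^{-1}(a)$. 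Since $G$ is compact, each $G_y$ is then a compact subgroup with trivial Lie algebra, hence finite, with identity component $\{e\}$. Letting $a$ range over $\mbox{im}(\phi)$, every stabilizer group of the $G$-action on $M$ is finite.

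Next I would apply Theorem~\ref{hatG} with $\Gh$ the trivial connected subgroup $\{e\}\subseteq G$. By the previous step it contains the identity component of each stabilizer group, so the hypotheses hold, and the theorem gives $\pi_1\big(M/\{e\}\big)\cong\pi_1\big(\phi^{-1}(G\cdot a)/\{e\}\big)$ for all $a\in\mbox{im}(\phi)$. Since $M/\{e\}=M$ and $\phi^{-1}(G\cdot a)/\{e\}=\phi^{-1}(G\cdot a)$, this is exactly $\pi_1(M)\cong\pi_1\big(\phi^{-1}(G\cdot a)\big)$, as claimed.

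I do not expect any real obstacle: the corollary is a direct specialization of Theorem~\ref{hatG}, the only extra ingredient being the elementary moment-map computation above. The single point worth stating explicitly in the write-up is that ``connected subgroup'' is understood to include the trivial subgroup, so that the choice $\Gh=\{e\}$ is legitimate.
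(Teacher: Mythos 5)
Your proposal is correct and is exactly the paper's intended argument: the paper's only stated justification is the remark that regular values force all stabilizers to be finite, after which Theorem~\ref{hatG} is applied with $\Gh$ the trivial subgroup, so that $M/\Gh=M$ and $\phi^{-1}(G\cdot a)/\Gh=\phi^{-1}(G\cdot a)$. Your explicit verification that $\operatorname{im}d\phi_x=\mathfrak{g}_x^{\circ}$ implies finiteness of the stabilizers simply fills in the detail the paper leaves implicit.
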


\begin{remark}
In Theorems~\ref{M=M/G}, \ref{M=M/G+}, and \ref{hatG}, where the properness of $\phi$ is
assumed, if we replace the properness by the assumption that
$\phi$ is proper as a map onto its image, the same results
still hold. The reason for this is in our proofs of the
theorems, we only need that $\phi$ has a local properness property.
\end{remark}

\begin{remark}
All our isomorphisms of fundamental groups are natural ones.
They are induced either by quotient of a group, or by natural
deformations in the space, or by inclusion of a subset in a
space, where in the last case the isomorphism may not be direct
--- it may be obtained by transitivity of isomorphisms. To
avoid wordy statements, we do not explicitly state the maps in
the theorems.
\end{remark}

In the case  when  $M$  is a  Hamiltonian $S^1$-manifold with a
proper moment map, using Morse theory, Godinho and Sousa-Dias
proved that if the circle action has a fixed point, then  $\pi_1(M)\cong\pi_1(M_a)$ for all
$a\in\mbox{im}(\phi)$  \cite{GSD}.\\

Now, we give a brief description of the ideas of proofs and
compare them with those for compact manifolds.

Let us first recall the proof of Theorem~\ref{thm1} for compact
$M$. In this case, $\mbox{im}(\phi)$ is compact, its
intersection with a closed positive Weyl chamber  is a convex
polytope (Theorem~\ref{convexity}). Let $b$ be a boundary
vertex furthest from the origin on the polytope, then
$\phi^{-1}(b)$ is a fixed set of $G_b$, where $G_b$ is the
stabilizer group of $b$ under the coadjoint action (see Lemma
6.14 in \cite{L2}. Note that $G_b$ contains a maximal torus of
$G$). Hence
$$\pi_1\left(\phi^{-1}(b)\right)\cong\pi_1(M_b).$$
Moreover, $\phi^{-1}(b)$ is the minimum  of the moment map of a
circle subgroup action. By arguments using Morse-Bott theory
(\cite{L1}) for Hamiltonian circle actions, we have
$$\pi_1(M)\cong\pi_1\left(\phi^{-1}(b)\right).$$
These together give the isomorphism
\begin{equation}\label{bdrbb}
\pi_1(M)\cong\pi_1(M_b), \,\,\,\mbox{where}\,\, b \,\,\mbox{is a
boundary value}.
\end{equation}
Next, using local equivariant diffeomorphism, local equivariant
deformation retraction, and a removing process, we proved that,
if $a$ and $a'$ are any two nearby values of $\phi$, then
\begin{equation}\label{aa'}
\pi_1(M_a)\cong\pi_1(M_{a'}).
\end{equation}
Finally, applying the same idea of removing and deformation
retraction alternately on the global space $M/G$, we proved that
there exists a value $c$ such that
\begin{equation}\label{c}
\pi_1(M/G)\cong\pi_1(M_c).
\end{equation}
Theorem~\ref{thm1} follows from (\ref{bdrbb}), (\ref{aa'}) and
(\ref{c}). The hard part of the proof for compact manifolds is
the proof of (\ref{aa'}) and (\ref{c}).

 If we do not assume that $M$ is compact, then the image of $\phi$ may
 not be compact. The proof of (\ref{bdrbb}) does not apply.
 In this paper, we take a new point of view. First, let us mention that
 the method of deforming and removing we used
 to prove (\ref{aa'}) and (\ref{c}) applies to the proof of Theorem~\ref{hatG}. For a more general
 subgroup $\Hat G\subset G$ as in
Theorem~\ref{hatG},  we make a key observation on the links of
the quotient strata to be removed from quotient stratified
spaces (Lemmas~\ref{Tlinksame} and \ref{Glinksame}).
 These observations make the removing process to work.

 For a noncompact Hamiltonian $G$-manifold $M$, without the extrema of the components of the moment map,
 looking at the relation between $\pi_1(M)$ and $\pi_1$ of the different
 quotients is the most different part of the study.
 Our idea is to look at the following induced map of the quotient map
 \begin{equation}\label{quot}
\pi_1\left(M\right)\to\pi_1\left(M/G\right).
 \end{equation}
Since $G$ is connected, (\ref{quot}) is surjective. Note that
if $\pi_1(M) = 1$, then (\ref{quot}) is an isomorphism. More
generally, (\ref{quot}) may not be injective. We will show
 that for any $G$-manifold $M$, if there exist ``enough" simply
connected orbits, then (\ref{quot}) is injective
(Proposition~\ref{propM=M/G}). If $M$ is a compact Hamiltonian
$G$-manifold, due to the existence of the extrema of the
components of the moment map, there are enough simply connected
orbits. In this paper, we succeed in proving that for a
Hamiltonian $G$-manifold $M$,  (\ref{quot}) is injective when
there exists one simply connected orbit. This is the proof of
Theorem~\ref{M=M/G} (A). This last point is a special property
for Hamiltonian $G$-manifolds (see Example~\ref{ex2} for a
counter example).

 Now, we mention a point for the proof of Theorem~\ref{M=M/G+} (A). When $M$ is compact, stabilizer
 groups which contain a maximal torus of $G$ exist. This
 allowed us to prove (\ref{bdrbb}). To use this criterion for noncompact manifolds,
 we will show that in a $G$-manifold, if there
 exists a point $x$ whose stabilizer group contains a maximal torus of $G$, then
 the orbit $G\cdot x$ is simply connected. In particular, this
 is new when the stabilizer group is not connected.

  To prove Theorems~\ref{M=M/G} (B)
  and \ref{M=M/G+} (B), we use part (A) of the corresponding theorem and Theorem~\ref{hatG}.

  \medskip

  Now we give a brief outline of the paper.
  In Section~\ref{section:prelim}, we set up the tools needed for later
  sections. In Section~\ref{section:G-manifolds}, we study
  $G$-manifolds  which have at least one simply connected
  orbit. In Section~\ref{section:hamG-manifolds},  we study Hamiltonian
  $G$-manifolds which have one simply connected orbit.
  Sections~\ref{section:G-manifolds} and \ref{section:hamG-manifolds} are the heart part
  towards proving Theorems~\ref{M=M/G} (A) and \ref{M=M/G+} (A).  In
  Section~\ref{section:M=M/G}, we prove Theorem~\ref{M=M/G}; and in
   Section~\ref{section:M=M/G+}, we prove Theorem~\ref{M=M/G+}.
  In Section~\ref{section:hat}, we prove
  Theorem~\ref{hatG}.

\subsubsection*{Acknowledgment} This work was partially supported by the Postdoctoral
fellowship of the Regional de Bourgogne. This paper was
completed and revised during my research
visits of the Ihes and the Chern Institute. I thank the
institutes for the visiting opportunity and support.

\section{Hamiltonian $G$-manifolds, $G$-manifolds, and the fundamental group of a quotient}
\label{section:prelim}
 This section consists of three subsections. First, we
restrict attention to Hamiltonian $G$-manifolds --- we review a
few theorems on Hamiltonian $G$-manifolds which  will be useful
for our proofs. Next, we consider $G$-manifolds --- we review
stratified spaces and state a lemma on removing strata from
stratified spaces. Our proof of Theorem~\ref{hatG} relies on
this lemma since the quotients are in general stratified
spaces. Finally, we consider an action of a group $K$, which
may not be connected, on a more general space $X$ (which may
not be a smooth manifold), and we state a theorem by Armstrong
on $\pi_1(X/K)$.

\subsection{Hamiltonian $G$-manifolds}
\
\medskip

We first recall the connectivity and convexity theorem.

A subset $\triangle$ of a vector space $V$ is {\bf polyhedral} if it
is the intersection of finitely many closed half spaces, and is {\bf
locally polyhedral} if for each point $x\in\triangle$, there is a
neighborhood $O$ of $x$ in $V$ and a polyhedral set $P$ in $V$ such
that $O\cap\triangle = O\cap P$.
\begin{theorem}\label{convexity}
      (\cite{AT}, \cite{GS0}, \cite{K} and \cite{LMTW})  Let $(M, \omega)$ be a connected
      Hamiltonian $G$-manifold with proper moment map $\phi$, where $G$ is a connected compact Lie group. Let $\mathfrak{t}_+^*$
   be a fixed closed positive Weyl chamber of $\mathfrak{g}^*$. Then
 \begin{enumerate}
  \item for each coadjoint orbit $\mathcal{O}$ in the image of $\phi$, $\phi^{-1}(\mathcal{O})$ is
     connected;
 \item the set
   $\phi(M)\cap\mathfrak{t}_+^*$ is a convex locally polyhedral set,  and it is a convex polytope if $M$ is compact.
  \end{enumerate}
     \end{theorem}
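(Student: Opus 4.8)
The plan is to deduce the non-abelian statement from the torus case by means of the symplectic cross-section construction, and then to establish the torus case by combining the local normal form for Hamiltonian actions (which yields the local polyhedral and local convexity structure of the image together with properness-type control), a Morse-theoretic argument for connectedness of the fibers, and a local-to-global convexity principle. Throughout I fix a maximal torus $T\subset G$ and write $\mathfrak{t}^*$, $\mathfrak{t}^*_+$ for the dual Cartan and the fixed closed Weyl chamber.

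First I would carry out the reduction to a torus. Each $\alpha\in\mathfrak{t}^*_+$ lies in the relative interior of a unique open face $\tau$ of $\mathfrak{t}^*_+$; let $G_\tau\subseteq G$ be the connected subgroup whose Lie algebra is the centralizer of $\tau$, so $T\subseteq G_\tau$ and $G_\alpha=G_\tau$. The symplectic cross-section theorem (\cite{GS0}, \cite{LMTW}) furnishes a $G_\tau$-invariant open ``slice'' $U_\tau\subset\mathfrak{g}_\tau^*\subset\mathfrak{g}^*$ containing the relative interior of $\tau$ such that $Y_\tau:=\phi^{-1}(U_\tau)$ is a $G_\tau$-invariant symplectic submanifold of $M$, $G\cdot Y_\tau$ is open in $M$, the restriction $\phi|_{Y_\tau}\colon Y_\tau\to\mathfrak{g}_\tau^*$ is again a proper moment map (now for the Hamiltonian $G_\tau$-action), $\phi(M)\cap U_\tau=\phi(Y_\tau)$, and $\phi^{-1}(G\cdot\alpha)\cong G\times_{G_\tau}(\phi|_{Y_\tau})^{-1}(\alpha)$ for $\alpha$ in the relative interior of $\tau$. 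Since $G$ and $G_\tau$ are connected, $\phi^{-1}(G\cdot\alpha)$ is connected if and only if $(\phi|_{Y_\tau})^{-1}(\alpha)=\phi^{-1}(\alpha)$ is connected, and the germ of $\phi(M)\cap\mathfrak{t}^*_+$ at $\alpha$ is determined by the Hamiltonian $G_\tau$-manifold $Y_\tau$. For $\alpha$ in the interior of the chamber one has $G_\tau=T$ directly; for $\alpha$ on a wall, $G_\tau$ is a proper subgroup and one finishes by induction on $\dim G$. Hence it suffices to treat $G=T$ a torus, where $\mathfrak{t}^*_+=\mathfrak{t}^*$ and coadjoint orbits are points.

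For the torus case I would use the Marle--Guillemin--Sternberg local normal form: near an orbit $T\cdot p$ with $\phi(p)=\alpha$ and stabilizer $H=T_p$, $M$ is equivariantly symplectomorphic to a neighbourhood of the zero section in $T\times_H(\mathfrak{h}^{0}\times W)$, where $\mathfrak{h}^{0}\subset\mathfrak{t}^*$ is the annihilator of $\mathfrak{h}$ and $W$ is a symplectic $H$-representation with weights $\alpha_1,\dots,\alpha_k$, and in these coordinates $\phi$ has the explicit form $\alpha+\eta+\frac{1}{2}\sum_j|w_j|^2\alpha_j$. Consequently, near $\alpha$ the image $\phi(M)$ equals $\alpha$ plus a neighbourhood of $0$ in $\mathfrak{h}^{0}$ plus the convex polyhedral cone generated by $\alpha_1,\dots,\alpha_k$; this exhibits $\phi(M)$ as locally polyhedral and locally convex. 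For connectedness of fibers I would choose a generic $\xi\in\mathfrak{t}$ and consider $\phi^\xi:=\langle\phi,\xi\rangle$, which is a Morse--Bott function whose critical set is the fixed-point set of the subtorus $\overline{\exp(\mathbb{R}\xi)}$ and all of whose critical submanifolds have even index and even coindex, in particular index and coindex $\neq 1$; using the properness of $\phi$ to control sublevel sets, the usual Atiyah--Guillemin--Sternberg argument then gives that every nonempty sublevel and superlevel set of $\phi^\xi$ is connected, and hence that $\phi^{-1}(\alpha)$ is connected for every $\alpha$.

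Finally, with $\phi\colon M\to\mathfrak{t}^*$ now a proper map with connected fibers and locally convex image, I would invoke the local-to-global convexity principle of Condevaux--Dazord--Molino (see also Hilgert--Neeb--Plank and Bjorndahl--Karshon): a proper map from a connected manifold to a vector space that is locally fiber-connected and has locally convex image has convex, closed image. This gives that $\phi(M)$ is convex and closed in the torus case; unwinding the cross-section reduction then yields that $\phi(M)\cap\mathfrak{t}^*_+$ is convex and locally polyhedral in general, and if $M$ is compact the image is compact, so this set is a compact convex locally polyhedral set, i.e. a convex polytope. The step I expect to be the main obstacle is the Morse-theoretic connectivity in the non-compact setting: even when $\phi$ is proper, $\phi^\xi$ need not be proper or bounded below on all of $M$, so one must either restrict $\xi$ to a suitable cone and replace $M$ by preimages of polyhedral pieces of $\mathfrak{t}^*$, or run the connectivity and the local-to-global convexity arguments in tandem on the stratified pieces produced by the cross-section theorem; managing this interplay carefully is where the real work lies.
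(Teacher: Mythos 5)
A preliminary remark: the paper offers no proof of Theorem~\ref{convexity} --- it is quoted from the literature (\cite{AT}, \cite{GS0}, \cite{K}, \cite{LMTW}), the noncompact proper case being precisely the main result of \cite{LMTW}, which is proved there by symplectic cutting (reducing to compact pieces where the Atiyah--Guillemin--Sternberg--Kirwan arguments apply). So your proposal has to be measured against those cited proofs; your chosen route --- cross sections, local normal form, and a Condevaux--Dazord--Molino/Hilgert--Neeb--Plank local-to-global principle --- is a genuinely different and in principle legitimate strategy.

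As written, however, it has two real gaps. First, the reduction to the torus ``by induction on $\dim G$'' breaks down exactly at the central values: if $\alpha$ lies in the central face of $\mathfrak t^*_+$ (for instance $\alpha=0$ when $G$ is semisimple), then $G_\alpha=G$, the slice is a $G$-invariant neighborhood, and the cross section $Y_\tau$ is merely an open subset of $M$, so no reduction to a smaller group occurs. Connectedness of $\phi^{-1}(G\cdot\alpha)$ and the local polyhedral description of $\phi(M)\cap\mathfrak t^*_+$ at such $\alpha$ is precisely where the nonabelian content of the theorem lives; in the literature it is supplied by Kirwan's Morse theory for $\|\phi\|^2$ \cite{K} or by the cutting construction of \cite{LMTW}, and your sketch offers no substitute for it. Second, the fiber-connectedness step in the abelian noncompact case, which you yourself flag, is not resolved: even with $\phi$ proper, $\phi^\xi=\langle\phi,\xi\rangle$ need be neither proper nor bounded below, so the sublevel-set Morse--Bott argument does not run globally. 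The standard repairs are either to cut $M$ along rational polyhedral pieces of $\mathfrak t^*$ into compact Hamiltonian pieces (the route of \cite{LMTW}), or to dispense with global Morse theory altogether and feed local fiber-connectedness (read off from the normal form) into the local-to-global principle, which then yields connectedness and convexity simultaneously. A small additional inaccuracy: $\phi|_{Y_\tau}$ is proper only as a map onto the slice $U_\tau$, not as a map to $\mathfrak g_\tau^*$; this local properness suffices for the argument but should be stated correctly. Until the two gaps above are filled, the proposal is an accurate map of known strategies rather than a proof.
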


  If $G=T$ is abelian,   $\phi(M)\cap\mathfrak{t}_+^*=\phi(M)$.  It  is a convex locally polyhedral
  set; it consists of {\bf faces}
  with different dimensions. These faces  are caused by different dimensional stabilizer groups
of the action,  and  they  can be in the interior or on the boundary of $\phi(M)$.
  A connected open face is called a {\bf chamber}. By quotienting out a
   subtorus which acts trivially, we can assume that the
  action is effective, and hence there is an open face.  If $G$ is nonabelian, $\phi(M)$ may
 intersect with  different faces  of $\mathfrak{t}_+^*$.  One should distinguish the two
notions of faces.\\

  For a Hamiltonian $G$-manifold with a proper moment map, we can do
  local deformation retractions as follows.

 \begin{theorem}\label{retract} (\cite{W}, \cite{Ler})
           Let $(M, \omega)$ be a connected  Hamiltonian $G$-manifold with proper moment map $\phi$,
           where $G$ is a connected compact Lie
           group.  If $0\in\mbox{im}(\phi)$, then there exists a $G$-invariant open neighborhood
           of $\phi^{-1}(0)$ which $G$-equivariantly deformation retracts to $\phi^{-1}(0)$.
 \end{theorem}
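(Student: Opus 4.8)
The plan is to exhibit $\phi^{-1}(0)$ as the zero level of the norm-square of the moment map and to build the retraction from its negative gradient flow. First I would fix a $G$-invariant almost complex structure $J$ on $M$ compatible with $\omega$, with associated $G$-invariant Riemannian metric $g(\cdot,\cdot)=\omega(\cdot,J\cdot)$, together with an $\mathrm{Ad}$-invariant inner product on $\mathfrak{g}$ identifying $\mathfrak{g}\cong\mathfrak{g}^*$. Set $f=\tfrac12|\phi|^2\colon M\to\R$; it is $G$-invariant and nonnegative, its zero set is exactly $\phi^{-1}(0)$, and a standard computation identifies $\mathrm{grad}_g f$ at $m$ with $\pm J_m$ applied to the value at $m$ of the vector field generated by $\phi(m)\in\mathfrak{g}\cong\mathfrak{g}^*$; in particular the critical set of $f$ at level $0$ is precisely $\phi^{-1}(0)$. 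By properness of $\phi$ the sublevel set $f^{-1}\big([0,c]\big)=\phi^{-1}\big(\overline{B_{\sqrt{2c}}(0)}\big)$ is compact for every $c\ge 0$, so $f$ is a proper, $G$-invariant function bounded below.

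Second, I would run the downward gradient flow $\psi_t$ of $f$ on the open $G$-invariant set $U=\{f<c\}=\phi^{-1}\big(B_{\sqrt{2c}}(0)\big)$, a neighborhood of $\phi^{-1}(0)$ with compact closure. Along a trajectory $f$ is non-increasing and bounded below, so the trajectory stays in the compact set $f^{-1}\big([0,f(m)]\big)$ and is therefore defined for all $t\ge 0$; the flow is $G$-equivariant because $f$ and $g$ are $G$-invariant, and it fixes $\phi^{-1}(0)$ pointwise. From the energy identity $\int_0^\infty\big|\mathrm{grad}_g f(\psi_t(m))\big|^2\,dt=f(m)-\lim_{t\to\infty}f(\psi_t(m))<\infty$ together with compactness of $\overline{U}$, each trajectory has a nonempty set of limit points contained in the critical set of $f$, which for $c$ small enough lies in $\phi^{-1}(0)$.

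Third, and this is the heart of the matter, I would promote this to a genuine deformation retraction by showing that $r(m):=\lim_{t\to\infty}\psi_t(m)$ exists as a single point for each $m\in U$ and that $(m,s)\mapsto\psi_{-\log(1-s)}(m)$ for $s<1$, extended by $(m,1)\mapsto r(m)$, is a continuous $G$-equivariant deformation retraction of $U$ onto $\phi^{-1}(0)$. The two nontrivial claims are that a trajectory converges to a single limit point rather than merely to a limit set, and that $r$ is continuous up to $s=1$. The standard way to secure both is the {\L}ojasiewicz gradient inequality in a neighborhood of $\phi^{-1}(0)$: by the local normal form theorem of Marle and of Guillemin--Sternberg, near each orbit in the compact set $\phi^{-1}(0)$ the Hamiltonian $G$-space $M$ is modeled equivariantly on an associated bundle built from a symplectic slice, in which $\phi$, and hence $f$, is given by an explicit polynomial expression; covering $\phi^{-1}(0)$ by finitely many such charts yields a uniform {\L}ojasiewicz estimate, which forces convergence at a uniform rate and therefore the existence and continuity of $r$. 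Equivalently, one may bypass the flow altogether: in each normal-form chart $\phi^{-1}(0)$ carries an explicit radial equivariant deformation retract of a tubular neighborhood, and one glues these using a $G$-invariant partition of unity subordinate to the cover, checking that the glued homotopy still lands in $\phi^{-1}(0)$ at time $1$. This is one route to the result, carried out in \cite{W} and \cite{Ler}.

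I expect the main obstacle to be exactly this last step. The energy estimate only produces convergence to a limit \emph{set}, and turning it into a continuous ``time-$\infty$ map'' onto $\phi^{-1}(0)$ genuinely requires either the analyticity supplied by the local normal form (through {\L}ojasiewicz) or the explicit local models; the remaining ingredients --- $G$-invariance of $f$ and $g$, the formula for $\mathrm{grad}_g f$, and completeness of the flow via properness of $\phi$ --- are routine.
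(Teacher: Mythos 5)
The paper does not prove this theorem itself; it is imported from \cite{W} and \cite{Ler}, and your sketch follows exactly the argument of those sources: the negative gradient flow of $\tfrac12|\phi|^2$ for a compatible invariant metric, completeness of the flow via properness, and single-point convergence plus continuity of the time-infinity map via a {\L}ojasiewicz inequality coming from the Marle--Guillemin--Sternberg local normal form. So your proposal is correct in outline and coincides with the cited proof (your alternative partition-of-unity gluing aside is dispensable and would need more care, but the main route is sound).
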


 The local normal form theorem below gives us a description up to
 isomorphism a neighborhood of an isotropic orbit in $M$. An orbit
 is {\bf isotropic} if the restriction of the symplectic form to it
 vanishes.  When an abelian group $T$ acts on $M$, each orbit is isotropic.   We will use this local model to compute the links of
the strata in  quotient stratified spaces.

\begin{theorem}\label{form}
         (\cite{M}, \cite{GS1}) Let $(M, \omega)$ be a symplectic manifold with a Hamiltonian connected compact Lie group $G$ action.
        Let $p\in M$ be a point such that the orbit $G\cdot p$ is isotropic. Let
         $H$ be the stabilizer group of $p$. Then a neighborhood of the orbit $G\cdot p$ in $M$
         is equivariantly symplectomorphic to
         $G\times_H(\mathfrak{h}^{\circ}\times V)$, where $\mathfrak{h}^{\circ}$ is the annihilator of  $\mathfrak{h}=$Lie$(H)$
    in $\mathfrak g^*$ on which $H$ acts by the coadjoint action, and $V$ is
 a complex vector space on which $H$ acts linearly and symplectically.

The $G$ action on this local model is $g_1\cdot [g, a, v]=[g_1g, a,
v]$, and the moment map is
       $\phi([g, a, v])=Ad^*(g)(\phi(p)+a+\psi(v))$, where $\psi(v)$ is the moment map for the $H$ action on $V$.
       \end{theorem}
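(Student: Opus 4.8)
The plan is to deduce Theorem~\ref{form} from a $G$-equivariant version of Weinstein's isotropic embedding theorem, after first building the model space intrinsically by symplectic reduction; this is the standard route to the Marle--Guillemin--Sternberg normal form, and I would organize it in three steps. \emph{Step 1 (the symplectic slice).} Since $\mathcal O:=G\cdot p$ is isotropic, $T_p\mathcal O\subseteq(T_p\mathcal O)^{\omega}$, so the quotient $V:=(T_p\mathcal O)^{\omega}/T_p\mathcal O$ carries a symplectic form induced by $\omega_p$; the stabilizer $H=G_p$ acts linearly and symplectically on $T_pM$, preserves $T_p\mathcal O$, and hence acts linearly and symplectically on $V$. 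As $H$ is compact I would fix an $H$-invariant $\omega$-compatible complex structure on $V$, making $V$ a complex vector space with $H\hookrightarrow U(V)$ and quadratic moment map $\psi\colon V\to\mathfrak h^{*}$, $\psi(0)=0$; the symplectic normal bundle of $\mathcal O$ in $M$ is then the $G$-equivariant symplectic vector bundle $G\times_{H}V\to\mathcal O$.

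\emph{Step 2 (the model).} Fix an $\mathrm{Ad}$-invariant inner product on $\mathfrak g$, giving an $H$-invariant splitting $\mathfrak g=\mathfrak h\oplus\mathfrak m$ and identifications $\mathfrak h^{\circ}\cong\mathfrak m^{*}\cong\mathfrak m$. On $T^{*}G\times V$, with the sum of the canonical and the linear symplectic forms, let $H$ act by the cotangent lift of right translation on the first factor and by the given action on $V$; this action is Hamiltonian, and reducing at the $\mathrm{Ad}^{*}_{H}$-fixed level determined by $\phi(p)|_{\mathfrak h}$ produces a symplectic manifold that a direct computation identifies with $G\times_{H}(\mathfrak h^{\circ}\times V)$, in which the zero section $G\times_{H}\{(0,0)\}\cong G/H$ is isotropic with symplectic normal bundle $G\times_{H}V$, and on which the residual left $G$-action is Hamiltonian with moment map $[g,a,v]\mapsto \mathrm{Ad}^{*}(g)\big(\phi(p)+a+\psi(v)\big)$ (after translating the origin of the $\mathfrak h^{\circ}$-slot to $\phi(p)|_{\mathfrak h^{\circ}}$). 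That this formula descends to $G\times_{H}(\cdot)$ uses $\mathrm{Ad}^{*}(h)\phi(p)=\phi(p)$ for $h\in H$ (from equivariance of $\phi$ and $hp=p$) and the $H$-equivariance of $\psi$. Alternatively the model can be constructed directly on $G\times_{H}(\mathfrak h^{\circ}\times V)$ by Weinstein's minimal-coupling procedure, using the principal connection on $G\to G/H$ furnished by the inner product.

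\emph{Step 3 (equivariant embedding and the moment map).} It remains to prove that two symplectic $G$-manifolds containing $G/H$ as an isotropic submanifold with $G$-equivariantly isomorphic symplectic normal bundles are $G$-equivariantly symplectomorphic near $G/H$; Steps 1 and 2 arrange the data so that $M$ near $\mathcal O$ and the model $Y$ near its zero section form such a pair (matching $\mathcal O$ with $G/H$ and sending the marked point over $p$ to $p$). For this I would (i) use $G$-invariant metrics, equivariant exponential maps, and the bundle isomorphism to build a $G$-equivariant diffeomorphism $\Psi$ from an invariant neighborhood of the zero section of $Y$ onto an invariant neighborhood of $\mathcal O$ in $M$, with $\Psi^{*}\omega$ agreeing pointwise with $\omega_{Y}$ along $G/H$; (ii) since $\omega_{0}:=\omega_{Y}$ and $\omega_{1}:=\Psi^{*}\omega$ are $G$-invariant and agree along $G/H$, invoke the $G$-equivariant relative Poincar\'e lemma (average the standard homotopy operator over $G$) to write $\omega_{1}-\omega_{0}=d\beta$ with $\beta$ a $G$-invariant $1$-form vanishing along $G/H$; (iii) since $G/H$ is compact, $\omega_{t}:=\omega_{0}+t(\omega_{1}-\omega_{0})$ is nondegenerate near $G/H$ for all $t\in[0,1]$, so the $G$-invariant time-dependent vector field $X_{t}$ determined by $\iota_{X_{t}}\omega_{t}=-\beta$ vanishes along $G/H$ and integrates to a $G$-equivariant isotopy $\varphi_{t}$ fixing $G/H$ pointwise with $\varphi_{1}^{*}\omega_{1}=\omega_{0}$. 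Then $\Psi\circ\varphi_{1}\colon(Y,\omega_{Y})\to(M,\omega)$ is a $G$-equivariant symplectomorphism near the zero section. Finally, since this map is $G$-equivariant and $G$ is connected, it intertwines the model's moment map with $\phi$ up to an additive constant in $(\mathfrak g^{*})^{G}$; evaluating at the point over $p$ shows the constant is $0$, which yields the stated formula for $\phi$.

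The hard part is Step 3: one must keep \emph{every} object --- $\Psi$, $\beta$, $X_{t}$, $\varphi_{t}$ --- genuinely $G$-invariant and defined over honest $G$-invariant neighborhoods, and, the key point for the Moser argument to close up, arrange that $\beta$ (hence $X_{t}$) vanishes along $G/H$ so that $\varphi_{t}$ fixes the orbit pointwise; here compactness of the orbit ($G$ compact, $H$ closed, so $G/H$ is a compact manifold) is exactly what makes the Moser flow exist for all $t\in[0,1]$ on a neighborhood of $G/H$. The identification of the reduced space and the pinning-down of the additive moment-map constant in Step 2 are routine but a little fiddly.
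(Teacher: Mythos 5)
The paper does not prove Theorem~\ref{form}; it quotes it as the classical Marle--Guillemin--Sternberg normal form and cites \cite{M} and \cite{GS1}, whose proofs follow exactly the route you outline (symplectic slice $V=(T_p\mathcal O)^{\omega}/T_p\mathcal O$, the model $G\times_H(\mathfrak h^{\circ}\times V)$ via reduction/minimal coupling, and a $G$-equivariant isotropic embedding theorem proved by an equivariant relative Poincar\'e lemma plus Moser isotopy). Your outline is correct and is essentially the argument of the cited sources, including the key point that equivariance pins the moment map down up to an $\mathrm{Ad}^*$-invariant constant, which is killed by evaluating at $p$.
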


 Next, we introduce the cross section theorem. In the case when $G$ is
 nonabelian, given a  $G$ orbit $G\cdot x$, it may not be isotropic. But if
 $a=\phi(x)$, and if $G_a$ is the stabilizer group of $a$, then the orbit $G_a\cdot x$ is isotropic in a
 ``cross section'', a symplectic submanifold  of $M$.

 \begin{definition}\label{def 4.1}
   Suppose that a group $G$ acts on a manifold $M$. Given a point $m$ in $M$ with stabilizer group $G_m$,
   a submanifold $U_m\subset M$ containing $m$ is a
   {\bf slice at m} if $U_m$ is $G_m$-invariant, $G\cdot U_m$ is a neighborhood of $m$, and the map
 $$G\times_{G_m}U_m\rightarrow G\cdot U_m, \qquad      [g, u]\longmapsto g\cdot u\quad\mbox{is an isomorphism}.$$
   \end{definition}

   For instance, consider the coadjoint action of $G=SU(2)$ or $SO(3)$ on $\mathbb{R}^3=$Lie$^*(G)$.
   For $a\in \mathbb{R}^3, a\neq 0$, there is a unique ray $I_a$
   connecting $0$ and $a$; the open ray $I_a^{\circ}=I_a-0$ is a slice at $a$.
   If $a=0$, then a  slice at $0$ is  $\mathbb{R}^3$.

    More generally, consider the coadjoint action of a connected compact
  Lie group $G$ on $\mathfrak g^*$.
  Fix a closed positive Weyl chamber $\mathfrak t_+^*$, without loss of generality, take
  $a\in\mathfrak t_+^*$. Let $\tau\subset\mathfrak t_+^*$ be the open face of  $\mathfrak t_+^*$ containing
 $a$  and let $G_a$ be the stabilizer group of $a$. Since all the points on $\tau$ have the same stabilizer group,
 we also use $G_{\tau}$ to denote $G_a$. Then the natural  slice at $a$ is
$U_a=G_a\cdot\{b\in\mathfrak t_+^* \,|\,G_b\subset
G_a\}=G_a\cdot\bigcup_{\tau\subset\overline{\tau'}}\tau'$,
   and it is an open subset of $\mathfrak g_{\tau}^*=\mathfrak g_a^*$.\\

  The following cross section theorem is due to Guillemin and Sternberg.
(Theorem 26.7 in \cite{GS}; for the following version, see Corollary
  2.3.6 in \cite{GLS}.)

    \begin{theorem}\label{cross}
     Let $(M, \omega)$ be a symplectic manifold with a moment map $\phi\colon M\rightarrow \mathfrak{g}^*$ arising from an action of a compact
    connected Lie group $G$. Let $a$ be a point in $\mathfrak{g}^*$ and let $U_a$ be the natural slice at $a$.
     Then the {\bf cross section} $R=\phi^{-1}(U_a)$ is
     a $G_a$-invariant symplectic submanifold of $M$, where $G_a$ is the stabilizer group of $a$. Furthermore,
     the restriction $\phi|_R$ is a moment map
     for the action of $G_a$ on $R$.
     \end{theorem}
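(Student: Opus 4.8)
The plan is to verify the three assertions in turn --- that $R=\phi^{-1}(U_a)$ is a $G_a$-invariant submanifold, that $\omega$ restricts to a symplectic form on it, and that $\phi|_R$ is a moment map for $G_a$ --- using throughout two features of the \emph{natural} slice $U_a$ visible from its description above: (i) $U_a$ is open in $\mathfrak g_a^*$, so $T_\alpha U_a=\mathfrak g_a^*$ for $\alpha\in U_a$; and (ii) every $\alpha\in U_a$ has $G_\alpha\subseteq G_a$ (writing $\alpha=g\cdot b$ with $g\in G_a$ and $G_b\subseteq G_a$ gives $G_\alpha=gG_bg^{-1}\subseteq G_a$). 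I would also fix an $\operatorname{Ad}(G_a)$-invariant inner product on $\mathfrak g$ (average over the compact group $G_a$), hence an $\operatorname{Ad}(G_a)$-invariant splitting $\mathfrak g=\mathfrak g_a\oplus\mathfrak m$; then in $\mathfrak g^*$ one has $\mathfrak g_a^*=\operatorname{Ann}(\mathfrak m)$, $\mathfrak m^*=\operatorname{Ann}(\mathfrak g_a)$, and $[\mathfrak g_a,\mathfrak m]\subseteq\mathfrak m$.

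For the submanifold claim I would show $\phi$ is transverse to $U_a$. For any $p$, the moment map relation $\iota_{X_M}\omega=d\langle\phi,X\rangle$ gives $\operatorname{im}(d\phi_p)=\operatorname{Ann}(\mathfrak g_p)$, since $X\in\mathfrak g$ annihilates $\operatorname{im}(d\phi_p)$ exactly when $\omega_p(X_M(p),\cdot)\equiv0$, i.e.\ when $X\in\mathfrak g_p$. So transversality at $p\in R$ with $\alpha=\phi(p)$ says $\operatorname{Ann}(\mathfrak g_p)+\mathfrak g_a^*=\mathfrak g^*$, which on annihilators reads $\mathfrak g_p\cap\mathfrak m=0$; and indeed $G_p\subseteq G_\alpha\subseteq G_a$ by equivariance and (ii), so $\mathfrak g_p\subseteq\mathfrak g_a$. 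Thus $R$ is a submanifold of codimension $\dim\mathfrak m$, and $G_a$-invariance is immediate from equivariance of $\phi$ and $G_a$-invariance of $U_a$. (One could instead read this and the next step off the local normal form of Theorem~\ref{form}, but the direct argument is cleaner.)

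For the symplectic statement, the key point is $T_pR=(\mathfrak m\cdot p)^\omega$ for $p\in R$, where $\mathfrak m\cdot p=\{X_M(p):X\in\mathfrak m\}$: a vector $v$ lies in $(\mathfrak m\cdot p)^\omega$ iff $\langle d\phi_p(v),X\rangle=\omega_p(X_M(p),v)=0$ for all $X\in\mathfrak m$, iff $d\phi_p(v)\in\operatorname{Ann}(\mathfrak m)=\mathfrak g_a^*=T_\alpha U_a$, iff $v\in T_pR$. Since $\mathfrak g_p\cap\mathfrak m=0$, the map $X\mapsto X_M(p)$ embeds $\mathfrak m$ into $T_pM$, and under it $\omega_p$ restricted to $\mathfrak m\cdot p$ corresponds, up to sign, to the skew form $B_\alpha(X,Y)=\langle\alpha,[X,Y]\rangle$ on $\mathfrak m$; hence $\mathfrak m\cdot p$ is symplectic iff $B_\alpha$ is nondegenerate, and then $T_pR=(\mathfrak m\cdot p)^\omega$ is symplectic too. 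To see $B_\alpha$ nondegenerate on $\mathfrak m$: if $X\in\mathfrak m$ has $B_\alpha(X,\mathfrak m)=0$, then for $Y\in\mathfrak g_a$ we have $[X,Y]\in\mathfrak m$, so $\langle\alpha,[X,Y]\rangle=0$ as well (as $\alpha\in\operatorname{Ann}(\mathfrak m)$), whence $\langle\alpha,[X,\mathfrak g]\rangle=0$, i.e.\ $X\in\mathfrak g_\alpha\subseteq\mathfrak g_a$, forcing $X\in\mathfrak g_a\cap\mathfrak m=0$. So $R$ is a symplectic submanifold.

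Finally, $\phi|_R$ lands in $U_a\subseteq\mathfrak g_a^*$, and I claim it is a moment map for $G_a$ on $(R,\omega|_R)$: the $G_a$-action on $\mathfrak g_a^*=\operatorname{Ann}(\mathfrak m)$ inherited from the coadjoint $G$-action is the coadjoint $G_a$-action (the composite $\operatorname{Ann}(\mathfrak m)\hookrightarrow\mathfrak g^*\twoheadrightarrow\mathfrak g_a^*$ is a $G_a$-equivariant isomorphism), so equivariance of $\phi|_R$ follows from that of $\phi$; and for $X\in\mathfrak g_a$ the field $X_M$ is tangent to the $G_a$-invariant $R$ and restricts to the generator $X_R$, so restricting $\iota_{X_M}\omega=d\langle\phi,X\rangle$ to $R$ gives $\iota_{X_R}(\omega|_R)=d\langle\phi|_R,X\rangle$ (using that pairing $\operatorname{Ann}(\mathfrak m)$ with $\mathfrak g_a$ is the restriction of pairing $\mathfrak g^*$ with $\mathfrak g$). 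The main obstacle is that both the transversality count and the nondegeneracy of $B_\alpha$ really do require (i), (ii), and the $\operatorname{Ad}(G_a)$-invariant complement $\mathfrak m$; getting a clean proof hinges on pinning down exactly where each of these enters, rather than on any single hard estimate.
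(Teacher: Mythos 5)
The paper does not prove this statement at all: it is quoted as background, with the proof delegated to Guillemin--Sternberg (Theorem 26.7 in [GS]) and to Corollary 2.3.6 in [GLS]. Your argument is a correct, self-contained proof, and it is essentially the standard one found in those references: fix an $\operatorname{Ad}(G_a)$-invariant splitting $\mathfrak g=\mathfrak g_a\oplus\mathfrak m$, identify $\mathfrak g_a^*$ with $\operatorname{Ann}(\mathfrak m)$, use $\operatorname{im}(d\phi_p)=\operatorname{Ann}(\mathfrak g_p)$ together with $G_p\subseteq G_{\phi(p)}\subseteq G_a$ (the slice property) to get transversality of $\phi$ to the open set $U_a\subseteq\operatorname{Ann}(\mathfrak m)$, observe $T_pR=(\mathfrak m\cdot p)^{\omega}$, and reduce nondegeneracy of $\omega|_R$ to nondegeneracy of $\langle\alpha,[\cdot,\cdot]\rangle$ on $\mathfrak m$, which follows from $\mathfrak g_\alpha\subseteq\mathfrak g_a$ and $[\mathfrak g_a,\mathfrak m]\subseteq\mathfrak m$; the moment map property of $\phi|_R$ is then the restriction of $\iota_{X_M}\omega=d\langle\phi,X\rangle$ for $X\in\mathfrak g_a$. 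All the individual steps check out (the sign ambiguity in identifying $\omega_p$ on $\mathfrak m\cdot p$ with $B_\alpha$ is immaterial for nondegeneracy, and the linear-algebra fact that $W^{\omega}$ is symplectic when $W$ is, is standard). The only inputs you take on faith are exactly those the paper itself supplies in its description of the natural slice, namely that $U_a$ is open in $\mathfrak g_a^*$ and is $G_a$-invariant with $G_b\subseteq G_a$ for the points used to build it; given these, your proof is complete and matches the cited sources in both structure and substance.
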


  The highest dimensional face $\tau^P$ of $\mathfrak{t}_+^*$ which intersects the image of $\phi$ is called the
   {\bf principal face}. Let $U_{\tau^P}$ be the slice at $\tau^P$. The cross section
 $\phi^{-1}(U_{\tau^P})$ is called the {\bf principal cross section}.
The maximal torus of $G$ acts on the principal cross section (but
not necessarily effectively, see Theorem 3.1 in \cite{LMTW}).

  We will use the cross section theorem in the following  two ways:
\begin{itemize}
 \item   If $a\in\mbox{im}(\phi)\cap\tau$ for some face $\tau\subset\mathfrak t^*_+$, then $\tau$ lies
 on the central dual Lie algebra of $G_a$, so the $G_a$-orbits in $R=\phi^{-1}(U_a)$ are isotropic orbits;
 then we can use the local normal form theorem  in the cross section $R$ to describe a neighborhood of
 a $G_a$-orbit.
 \item
 Assume that the moment map $\phi$ is proper.
 If $a\in\mbox{im}(\phi)\cap\tau$, then by Theorem~\ref{retract},
 there exists a $G_a$-invariant neighborhood of  $\phi^{-1}(a)$  in $R$
 which $G_a$-equivariantly deformation retracts to $\phi^{-1}(a)$. By
 equivariance of $\phi$, there exists a $G$-invariant neighborhood
$\mathcal N$ of $\phi^{-1}(G\cdot a)$ in $M$ which $G$-equivariantly
deformation retracts to $\phi^{-1}(G\cdot a)$.
\end{itemize}

\subsection{G-manifolds and stratified spaces}
\
\medskip

Let $G$, a connected compact Lie group, act on a smooth manifold
$M$, the quotient space $M/G$ is a stratified space (\cite{Br}). If
$(M, \omega)$ is a symplectic manifold and the $G$ action is
Hamiltonian, then the symplectic quotients are stratified spaces
(\cite{SL}).

   Let $X$ be a Hausdorff and paracompact topological space and let $\mathcal J$ be a partially ordered set
   with order relation denoted by $\leq$. A $\mathcal J$-decomposition of $X$ is a locally finite collection
   of disjoint, locally closed manifolds $S_i\subset X$ (one for each $i\in\mathcal J$) called pieces
   such that
 $X=\cup_{i\in\mathcal J}S_i$, and that
 $S_i\cap\overline{S_j}\neq\emptyset\Leftrightarrow S_i\subset\overline{S_j}\Leftrightarrow i\leq j$.
  We call the space $X$ a $\mathcal J$-{\bf decomposed space}.

A stratified space is defined recursively as follows.
 A  decomposed space $X$ is called a {\bf stratified space} if the pieces of $X$, called strata, satisfy the following condition:
Given a point $x$ in a piece $S$ (connected),  there exist an open
neighborhood $\Tilde{U}$ of $x$ in $X$, an open ball $B$ around $x$
in $S$, a compact stratified space $L$, called the {\bf link of x},
and a homeomorphism
   $$\varphi: B\times \overset{\circ}{C}L\rightarrow \Tilde{U}$$
  that preserves the decompositions. Here, $\overset{\circ}{C}L$ is the space obtained by collapsing the boundary $L\times {0}$ of the
  half open cylinder $L\times [0, \infty)$ to a point.
 We also call the  link of $x$ the {\bf link of S}.

    For a smooth manifold $M$, if $N$ is a closed submanifold of $M$, by the tubular neighborhood theorem, a neighborhood of $N$ in $M$
is diffeomorphic to a neighborhood of the normal bundle $E$ of $N$
in $M$. If the fiber of the sphere bundle $S(E)$ of $N$ is connected
and simply connected, i.e., if the codimension of $N$ is bigger than
or equal to $3$, by the Van-Kampen theorem, $\pi_1(M)\cong\pi_1(M-N)$. For a stratified
space $X$, the  link of a stratum $S$  plays the role of the fiber
of the sphere bundle of a submanifold (in a smooth manifold). Using
the Van-Kampen theorem, we can prove the following lemma. May see
Lemma 2.3 in \cite{L2} for a proof.
\begin{lemma}\label{remove:str}
Let $X$ be a stratified space. Let $S$ be a connected closed stratum of $X$ whose link is connected and simply connected.
Then $\pi_1(X)\cong\pi_1\left(X- S\right)$.
\end{lemma}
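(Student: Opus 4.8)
The plan is to reduce the statement to a local Van~Kampen argument, exactly parallel to the smooth case where one removes a closed submanifold of codimension $\geq 3$. First I would set $U = X - S$ and cover $X$ by the two open sets $U$ and $\Tilde{U}$, where $\Tilde{U}$ is a distinguished tubular-type neighborhood of $S$; I need such a global $\Tilde{U}$, which is obtained by taking, for each point of $S$, the chart $\varphi\colon B\times\overset{\circ}{C}L\to\Tilde U$ from the definition of a stratified space, and patching these together over the (paracompact) stratum $S$ into a single open set $\Tilde U\supset S$ that deformation retracts onto $S$ and whose ``link directions'' are copies of the cone $\overset{\circ}{C}L$. Since $L$ is connected and simply connected, so is the open cone $\overset{\circ}{C}L$; hence the complement $\Tilde U - S$, which fibers over $S$ with fiber $\overset{\circ}{C}L - \{\text{cone point}\}$ — a space homotopy equivalent to $L$ — has the property that the inclusion $\Tilde U - S \hookrightarrow \Tilde U$ induces a surjection on $\pi_1$ (and, after the Van~Kampen bookkeeping below, the kernel is killed).

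Next I would run Van~Kampen on the cover $X = U \cup \Tilde U$, with $U\cap\Tilde U = \Tilde U - S$. This gives
\[
\pi_1(X) \;\cong\; \pi_1(U) \ast_{\pi_1(\Tilde U - S)} \pi_1(\Tilde U).
\]
Because $\Tilde U$ deformation retracts to $S$ and the fiber $L$ of the link bundle over $S$ is simply connected, the long exact sequence of the fibration $\Tilde U - S \to S$ (or, more elementarily, a direct homotopy argument using that $\overset{\circ}{C}L - \{*\} \simeq L$ is simply connected) shows the map $\pi_1(\Tilde U - S)\to\pi_1(\Tilde U)\cong\pi_1(S)$ is an isomorphism. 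Substituting this into the pushout: the amalgamated product over a group that maps isomorphically to one of the factors collapses, yielding $\pi_1(X)\cong\pi_1(U) = \pi_1(X-S)$. (Concretely: every relation coming from $\Tilde U$ is already a consequence of relations coming from $\Tilde U - S \subset U$, so adjoining $\pi_1(\Tilde U)$ adds nothing.)

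The main obstacle is the first step — producing the global neighborhood $\Tilde U$ of the stratum $S$ with a well-behaved ``link bundle'' structure, since the definition of a stratified space only supplies the cone charts pointwise and a priori the link $L$ could vary over $S$. One has to argue that $L$ is locally constant along the connected stratum $S$ (the homeomorphism type of the link is a local invariant of the stratification), so it is globally a fixed $L$, and then glue the cone charts over a partition of unity subordinate to a locally finite cover of $S$ to get $\Tilde U$ together with a deformation retraction $\Tilde U \searrow S$ that restricts, on $\Tilde U - S$, to a deformation retraction onto a subspace with the homotopy type of the associated $L$-bundle. Once this patching is in hand — and it is standard for stratified spaces, so I would cite it rather than belabor it — the fundamental-group computation is the routine Van~Kampen manipulation above; a detailed write-up appears as Lemma~2.3 in \cite{L2}.
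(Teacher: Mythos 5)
Your outline is essentially the paper's intended argument (the paper itself only sketches the analogy with removing a codimension $\geq 3$ submanifold and defers to Lemma 2.3 of \cite{L2}): cover $X$ by $X-S$ and a neighborhood $\Tilde U$ of $S$, note that $\Tilde U - S$ has the homotopy type of an $L$-fibration over $S$, use one-connectedness of $L$ to see that $\pi_1(\Tilde U - S)\to\pi_1(\Tilde U)\cong\pi_1(S)$ is an isomorphism, and collapse the Van Kampen pushout to $\pi_1(X-S)$. The one place where your write-up is shakier than you acknowledge is the globalization step: the definition of a stratified space only provides cone charts $B\times \overset{\circ}{C}L$ near each point of $S$, and a partition of unity cannot glue topological cone charts into a global tube with an actual link bundle --- that kind of tube theorem (Thom--Mather, Quinn) is not part of the definition used here. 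The standard fix, and in effect what the cited proof does, is to avoid the global bundle: take $\Tilde U$ to be the union of the chart neighborhoods and verify the needed $\pi_0$- and $\pi_1$-statements locally, i.e.\ push loops in $\Tilde U$ and the disks bounding them off $S$ chart by chart, using that $\overset{\circ}{C}L-\{\ast\}\simeq L$ is connected and simply connected; the Van Kampen bookkeeping then goes through exactly as you wrote it. So the group-theoretic part of your argument is fine, but you should either cite a mapping-cylinder-neighborhood theorem valid for the stratified spaces at hand or replace the partition-of-unity gluing by this local argument.
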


\subsection{A theorem on the fundamental group of a quotient}
\
\medskip

When a group $K$ which may not be connected act on a space, we
have the following theorem by Armstrong on the fundamental
group of the quotient. We will use this theorem in
Sections~\ref{section:G-manifolds} and \ref{section:hat}.

 \begin{theorem}\label{thmarm}(\cite{A})
   Let $K$ be a compact Lie group acting on a compact path connected and simply connected
   metric space $X$. Let $H$ be the smallest normal subgroup of $K$ which contains the identity component
   of $K$ and all those elements of $K$ which have fixed points.
   Then $\pi_1(X/K)\cong K/H$.
   \end{theorem}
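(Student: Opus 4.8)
The plan is to reduce the assertion to the statement that a simply connected free quotient is a universal cover, after replacing $K$ by $H$. The first point is that $H$, though defined as a normal closure, is transparent: conjugation preserves the property of having a fixed point on $X$ (if $k'$ fixes $y$, then $kk'k^{-1}$ fixes $k\cdot y$), and the identity component $K_0$ is normal, so, writing $F\subseteq K$ for the set of elements that have a fixed point on $X$, the subgroup $\langle K_0\cup F\rangle$ is already normal and hence equals $H$. From $H\supseteq K_0$ we get that $K/H$ is a quotient of the finite group $K/K_0$, so $K/H$ is finite; and $K/H$ acts on $X/H$ by normality of $H$, \emph{freely}, because if the coset of $g\in K$ fixes the $H$-orbit of $x$, then $g\cdot x=h\cdot x$ for some $h\in H$, so $h^{-1}g\in F\subseteq H$ and $g\in H$. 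Suppose for the moment that $X/H$ is simply connected. Since $X$ is compact metric and $H$ is compact, $X/H$ is a compact metric (in particular Hausdorff) space on which the finite group $K/H$ acts freely, so the orbit map $X/H\to(X/H)/(K/H)=X/K$ is a regular covering projection with deck group $K/H$; a simply connected total space makes it the universal cover, and therefore $\pi_1(X/K)\cong K/H$.

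It remains to prove that $X/H$ is simply connected, and the tool I would use — the step I also expect to be the real obstacle — is the path-lifting property of orbit maps: every loop in an orbit space lifts to a path in the total space, with its two endpoints necessarily in a single orbit. (In the metric generality of the hypothesis this is exactly what needs to be checked; it is what \cite{A} supplies, and when $X$ is a manifold, as in every application in this paper, it follows from the slice theorem.) Fix $x_0\in X$, let $\bar x_0$ be its image in $X/H$ and $p\colon X\to X/H$ the orbit map, and for $\sigma\in H$ let $a_\sigma$ be any path in $X$ from $x_0$ to $\sigma\cdot x_0$ (such exist, $X$ being path connected); set $\psi(\sigma)=[\,p\circ a_\sigma\,]\in\pi_1(X/H,\bar x_0)$. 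Then $\psi$ is well defined, since any two such paths are homotopic rel endpoints in the simply connected $X$; it is a homomorphism, since $a_\sigma\cdot(\sigma\cdot a_\tau)$ and $a_{\sigma\tau}$ are homotopic rel endpoints while $p\circ(\sigma\cdot a_\tau)=p\circ a_\tau$ by $H$-invariance of $p$; and it is surjective, since a loop $\gamma$ in $X/H$ lifts to a path $\tilde\gamma$ from $x_0$ to some $\sigma\cdot x_0$, making $\tilde\gamma\cdot\overline{a_\sigma}$ a loop in the simply connected $X$, whence $[\gamma]=[\,p\circ\tilde\gamma\,]=[\,p\circ a_\sigma\,]=\psi(\sigma)$.

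To finish I would show $\psi$ is the trivial homomorphism; since $H=\langle K_0\cup F\rangle$, it is enough to kill $K_0$ and $F$. If $\sigma\in K_0$, connectedness of $K_0$ provides a path from $x_0$ to $\sigma\cdot x_0$ lying inside the orbit $K_0\cdot x_0$; choosing $a_\sigma$ to be this path, $p\circ a_\sigma$ takes values in $p(K_0\cdot x_0)=\{\bar x_0\}$, hence is constant and $\psi(\sigma)=1$. If $\sigma\in F$ fixes a point $z\in X$, choose a path $c$ from $x_0$ to $z$ and take $a_\sigma=c\cdot(\sigma\circ\bar c)$; since $\sigma\cdot z=z$ this runs from $x_0$ to $\sigma\cdot x_0$, and $p\circ a_\sigma=(p\circ c)\cdot\overline{(p\circ c)}$ is null-homotopic, so $\psi(\sigma)=1$. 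Hence $\psi\equiv1$, and by surjectivity $\pi_1(X/H)=1$, which, combined with the first paragraph, gives $\pi_1(X/K)\cong K/H$.

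The genuine obstacle is the point-set input used above — the path-lifting property for orbit maps of compact Lie group actions, in the metric generality of the statement — together with the minor check that $X/K$ is well enough behaved for the covering-space conclusion; everything else is formal manipulation of homomorphisms and covers. No symplectic data ($\omega$, $\phi$, or the group being Hamiltonian) enter anywhere, which is why Theorem~\ref{thmarm} is quoted from \cite{A} rather than reproved here.
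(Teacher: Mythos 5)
The paper does not actually prove Theorem~\ref{thmarm}; it is imported verbatim from Armstrong \cite{A}, so there is no internal argument to compare yours against --- the relevant question is only whether your reconstruction is sound. It is, modulo the one ingredient you explicitly flag: the path-lifting property for orbit maps of compact group actions on metric spaces, which is indeed the genuine point-set content of the citation and is available in the literature (Montgomery--Yang; see also Bredon \cite{Br}). Granting that, your steps all check out: $F$ (the set of elements with fixed points) is conjugation-invariant and closed under inverses, so $\langle K_0\cup F\rangle$ is already normal and equals $H$; the induced $K/H$-action on the compact Hausdorff space $X/H$ is free, a free finite group action on a Hausdorff space is a covering space action, so once $X/H$ is simply connected the quotient $X/K=(X/H)/(K/H)$ has $\pi_1\cong K/H$; and your map $\psi\colon H\to\pi_1(X/H)$ is well defined because $X$ is simply connected, is a homomorphism because $p\circ(\sigma\cdot a_\tau)=p\circ a_\tau$, is surjective by path lifting, and vanishes on the generators $K_0$ (deform inside the orbit $K_0\cdot x_0$, which $p$ collapses) and $F$ (the path through a fixed point projects to a null-homotopic loop), forcing $\pi_1(X/H)=1$. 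The only caution is that the entire weight of the ``metric space'' hypothesis sits in the path-lifting step, so in a self-contained treatment that lemma would have to be quoted precisely rather than gestured at --- which is, in effect, why the paper cites \cite{A} instead of reproving the theorem; your closing remark that no symplectic structure is involved is also accurate.
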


\section{$G$-manifolds which have a simply connected orbit}\label{section:G-manifolds}

 In this section,  we study $G$-manifolds. We first find a criterion on judging injectivity of the
 map $\pi_1(M)\to\pi_1\left(M/G\right)$ induced by the quotient and prove Lemma~\ref{liftpoint}.
 Then we use this to prove that if there are ``enough" simply connected orbits, then the map
 $\pi_1(M)\to\pi_1\left(M/G\right)$ is an isomorphism. Finally,
 we prove that if there exists a stabilizer group which contains
 a maximal torus of $G$, then there exists a simply connected
 orbit.\\

 Let $M$ be a connected smooth $G$-manifold.
 Let $H$ be a subgroup of $G$, and let $(H)$ be the
subgroups of $G$ conjugate to $H$. Denote $M_{(H)}=\{ x\in M\,
|\, G_x\in (H)\}$. The set $M_{(H)}$, if nonempty, is a
submanifold of $M$, and  we call it the {\bf $(H)$-orbit type}
or the {\bf $(H)$-isotropy type}. The manifold $M_{(H)}$ may
not be connected. We denote $M_{(H), c}$ as a connected
component of $M_{(H)}$. There is a {\em connected open dense}
orbit type in $M$, called the {\bf principal orbit type},
denoted $M_P$. If $M_{(H)}\subset \overline{M_{(H')}}$, then
$(H)\supseteq (H')$. Clearly, for each orbit type $M_{(H)}$,
$M_{(H)}\subset\overline{M_P}$.  In each $M_{(H), c}$, by the
slice theorem, $M_{(H), c}$ is a locally trivial fibration over
$M_{(H), c}/G$ with fiber $G/H$; hence the orbits in
the same $M_{(H), c}$ are homotopic. If $M_{(H), c}\subset
\overline{M_{(H'), c}}$, then there is a natural deformation
map from an orbit in $M_{(H'), c}$ to an orbit in $M_{(H), c}$
by going to the closure.

\begin{lemma}\label{liftpoint}
 Let $M$ be a connected  smooth $G$-manifold.
 Then the map $\pi_1(M)\to\pi_1\left(M/G\right)$ induced by the quotient is injective if and only if
 for each point $\bar x\in M/G$, $[\Tilde\alpha_x] = 1\in\pi_1(M)$, where
 $\Tilde\alpha_x$ is any loop which is a lift of the trivial loop $\bar x$.
\end{lemma}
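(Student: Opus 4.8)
\medskip
\noindent\emph{Proof plan.} The plan is to reformulate the statement in the language of covering spaces and to prove the substantive (backward) implication by lifting the $G$-action to the universal cover of $M$. Fix a basepoint $x_0\in M$; since $M$ is a connected manifold it is path connected, and both the hypothesis and the conclusion are independent of the choice of basepoint: ``$[\Tilde\alpha_x]=1\in\pi_1(M)$'' simply means $\Tilde\alpha_x$ is nullhomotopic, and injectivity of the induced map $q_*\colon\pi_1(M)\to\pi_1(M/G)$ at one basepoint is equivalent to injectivity at any other. Here $q\colon M\to M/G$ is the quotient map, and we write $\Gamma=\pi_1(M,x_0)$. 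The forward implication is immediate: if $q_*$ is injective and $\Tilde\alpha_x$ is a loop with $q\circ\Tilde\alpha_x$ the trivial loop at $\bar x$, then $q_*[\Tilde\alpha_x]=1$ forces $[\Tilde\alpha_x]=1$. So from now on we assume that \emph{every loop lying inside a single $G$-orbit is nullhomotopic in $M$}; equivalently, $\pi_1(G\cdot y)\to\pi_1(M)$ is the trivial map for every $y\in M$, and in particular each orbit map $g\mapsto g\cdot y$ induces the trivial homomorphism $\pi_1(G)\to\pi_1(M)$.

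Let $\Tilde q\colon\Tilde M\to M$ be the universal cover, with deck group $\Gamma$ and a chosen lift $\Tilde x_0$ of $x_0$. Since $g\mapsto g\cdot x_0$ induces the trivial map on $\pi_1$, the map $G\times\Tilde M\to M$, $(g,\Tilde z)\mapsto g\cdot\Tilde q(\Tilde z)$, lifts through $\Tilde q$ to a continuous map $\Tilde a\colon G\times\Tilde M\to\Tilde M$ normalized by $\Tilde a(e,\Tilde x_0)=\Tilde x_0$; a standard uniqueness-of-lifts argument, using connectedness of $G$ and of $\Tilde M$, shows that $\Tilde a$ is a $G$-action on $\Tilde M$ covering the given action on $M$ and commuting with the deck action, so that $\Gamma\times G$ acts on $\Tilde M$ with $\Tilde M/(\Gamma\times G)=M/G$. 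Put $Y:=\Tilde M/G$, equipped with its residual $\Gamma$-action, so that $Y/\Gamma=M/G$. Because $G$ is connected, the quotient map $\Tilde M\to Y$ is surjective on $\pi_1$ (the surjectivity in \eqref{quot}, applied to the $G$-manifold $\Tilde M$); since $\pi_1(\Tilde M)=1$, the space $Y$ is simply connected and path connected.

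The heart of the proof is to show that $\Gamma$ acts \emph{freely} on $Y$. Suppose $\gamma\in\Gamma$ fixes the $G$-orbit of a point $\Tilde z\in\Tilde M$ in $Y$, i.e.\ $\gamma\cdot\Tilde z=g\cdot\Tilde z$ for some $g\in G$. Applying $\Tilde q$, and using that $\Tilde q$ is invariant under the deck action while $\Tilde a$ covers the $G$-action, we obtain $g\cdot z=z$ where $z=\Tilde q(\Tilde z)$, so $g\in G_z$. Choose a path $s\mapsto g_s$ in $G$ from $e$ to $g$. Then $s\mapsto g_s\cdot z$ is a loop in the orbit $G\cdot z$ based at $z$, while $s\mapsto g_s\cdot\Tilde z$ is one of its lifts to $\Tilde M$, running from $\Tilde z$ to $g\cdot\Tilde z=\gamma\cdot\Tilde z$. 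By the standing assumption this loop is nullhomotopic in $M$, so every lift of it is again a loop; hence $\gamma\cdot\Tilde z=\Tilde z$, and since $\Gamma$ acts freely on $\Tilde M$ this forces $\gamma=e$. Combining freeness with proper discontinuity of the $\Gamma$-action on $Y$ --- which descends from proper discontinuity on $\Tilde M$, using compactness of $G$ and openness of orbit maps --- the projection $p\colon Y\to Y/\Gamma=M/G$ is a covering map whose deck group is all of $\Gamma$; being covered by the simply connected, path connected space $Y$, it is the universal cover of $M/G$, so $\pi_1(M/G,\bar x_0)\cong\Gamma$. Finally, lifting a loop $\ell$ based at $x_0$ to $\Tilde M$ and then pushing it into $Y$ identifies this isomorphism with $q_*$ itself, whence $q_*$ is injective --- indeed bijective, in agreement with its known surjectivity.

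The main obstacle is that $q\colon M\to M/G$ is \emph{not} a fibration: it has path lifting but not homotopy lifting, and lifts of paths are not unique, so one cannot simply lift a nullhomotopy of $q\circ\alpha$ back up to $M$. Lifting the group action rather than individual homotopies sidesteps this, and the hypothesis that orbit loops are nullhomotopic is used exactly where it must be --- once to lift $G$ itself (rather than only a proper cover of $G$) to $\Tilde M$, and, decisively, to force freeness of the residual $\Gamma$-action on $\Tilde M/G$ in the key step. The remaining points --- that $\Tilde a$ is a genuine action, and that $\Gamma$ acts properly discontinuously on $Y$ --- are routine verifications, after which the conclusion is formal covering-space theory applied to the stratified, hence locally contractible and paracompact, spaces $M/G$ and $\Tilde M/G$.
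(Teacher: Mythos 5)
Your argument is correct, but it takes a genuinely different route from the paper. The paper's proof is local and elementary: given a loop in $M/G$ that bounds a disk, it triangulates the disk into small loops whose lifts lie in tubular neighborhoods of orbits, retracts each such neighborhood onto its orbit, and then invokes the hypothesis orbit by orbit to contract each small lifted loop, hence the whole lift. You instead work globally with covering spaces: the hypothesis (every loop inside an orbit is nullhomotopic) is used once to satisfy the lifting criterion so that the $G$-action lifts to the universal cover $\Tilde M$, compatibly with the deck group $\Gamma=\pi_1(M)$, and a second, decisive time to show that $\Gamma$ acts freely on $Y=\Tilde M/G$ (via the observation that a lift of a nullhomotopic orbit loop must close up). Together with properness of $\Tilde M\to Y$ (compactness of $G$) and properness of the deck action, this exhibits $Y$ as a simply connected covering of $Y/\Gamma=M/G$ with deck group $\Gamma$, and the identification of the resulting isomorphism with $q_*$ gives injectivity --- in fact bijectivity, once one grants the standard surjectivity of $\pi_1(\Tilde M)\to\pi_1(\Tilde M/G)$ for connected $G$, the same fact the paper quotes for $M$. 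The trade-off: your proof needs more machinery (lifting group actions, properness, the covering-space dictionary, and mild point-set facts about $\Tilde M/G$), but it yields a stronger structural conclusion, namely that $\Tilde M/G$ is the universal cover of $M/G$, and it essentially absorbs Proposition~\ref{propM=M/G}'s later use of the criterion; the paper's proof is shorter, purely local (slice/tube theorem plus subdividing a disk), and proves exactly the stated equivalence without invoking covering-space theory. The steps you flag as routine (that the lift is an action commuting with deck transformations, and that proper discontinuity descends to $Y$) are indeed standard, so I see no gap.
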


 \begin{proof}
 The only if part is clear.

 Now, we assume that for each point $\bar x\in M/G$, $[\Tilde\alpha_x] = 1\in\pi_1(M)$, where
 $\Tilde\alpha_x$ is any loop which is a lift of the trivial loop $\bar x$.
 Let $\alpha$ be any loop in $M/G$ such that
 $[\alpha]=1\in\pi_1\left(M/G\right)$. Let $\Tilde\alpha\subset M$ be a
 lifted loop of $\alpha$. We want to show that $[\Tilde\alpha] = 1\in
 \pi_1(M)$.  Since $[\alpha]=1\in\pi_1\left(M/G\right)$, the loop $\alpha$
 bounds a disk. We triangulate the disk into small disks which are
 bounded by small loops $\alpha_i$'s, where $i=1, 2, \dots, k$, such that the lift $\Tilde\alpha_i$ of
 each $\alpha_i$
 lies in a small neighborhood $\mathcal N_i$ of an  orbit $\mathcal O_i$.
 Assume that each $\mathcal N_i$ is small enough so that it deformation retracts
 to $\mathcal O_i$; so $\Tilde\alpha_i$ is homotopic to a loop $\Tilde\alpha_i'$ in $\mathcal
 O_i$. Since by our assumption $[\Tilde\alpha_i'] = 1\in \pi_1(M)$, $[\Tilde\alpha_i] = 1\in
 \pi_1(M)$, i.e., $\Tilde\alpha_i$ bounds a disk in $M$. Since this
 is true for all the $\Tilde\alpha_i$'s, $\Tilde\alpha$ bounds a
 disk in $M$.
\end{proof}

\begin{proposition}\label{propM=M/G}
 Let $M$ be a connected  smooth $G$-manifold.
 If each connected component of each orbit type contains a simply connected
 orbit in its closure, then the quotient map induces an isomorphism on $\pi_1$, i.e.,  $\pi_1(M)\cong\pi_1\left(M/G\right).$
 \end{proposition}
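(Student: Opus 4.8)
The plan is to use Lemma~\ref{liftpoint}: it suffices to show that for every point $\bar x\in M/G$, any loop $\Tilde\alpha_x\subset M$ which lifts the constant loop at $\bar x$ is null-homotopic in $M$. Such a lift is automatically a loop inside the single orbit $\mathcal O = G\cdot x$, since the fiber of $M\to M/G$ over $\bar x$ is exactly that orbit. So the whole problem reduces to: for \emph{every} orbit $\mathcal O$ in $M$, the inclusion $\mathcal O\hookrightarrow M$ kills $\pi_1(\mathcal O)$, i.e.\ $\pi_1(\mathcal O)\to\pi_1(M)$ is the trivial map.

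To see this, fix an orbit $\mathcal O$ and let $M_{(H),c}$ be the connected component of the orbit type containing it. By hypothesis, the closure $\overline{M_{(H),c}}$ contains some connected component $M_{(H'),c'}$ of a (possibly more singular) orbit type which itself contains a simply connected orbit $\mathcal O'$; more precisely the hypothesis gives a simply connected orbit in the closure of $M_{(H),c}$, so we may take $\mathcal O'$ simply connected with $\mathcal O'\subset\overline{M_{(H),c}}$ and $\mathcal O'\subset M_{(H'),c'}$ for some orbit type with $M_{(H),c}\subset\overline{M_{(H'),c'}}$. As recalled in the paragraph preceding Lemma~\ref{liftpoint}, when $M_{(H),c}\subset\overline{M_{(H'),c'}}$ there is a natural deformation (``going to the closure'') carrying an orbit in the bigger stratum $M_{(H),c}$ to an orbit in the smaller stratum $M_{(H'),c'}$; concretely, via the slice theorem at a point of $M_{(H'),c'}$, the stratum $M_{(H),c}$ appears locally as a $G$-bundle over a punctured cone whose zero section is an orbit of type $(H')$, and radial contraction provides a homotopy within $M$ from $\mathcal O$ to an orbit $\mathcal O''$ of type $(H')$ lying in $M_{(H'),c'}$. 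Since all orbits in the connected stratum $M_{(H'),c'}$ are mutually homotopic (again the slice theorem: $M_{(H'),c'}\to M_{(H'),c'}/G$ is a fiber bundle with fiber $G/H'$, and the base is connected), $\mathcal O''$ is homotopic in $M$ to the simply connected orbit $\mathcal O'$. Composing, the inclusion $\mathcal O\hookrightarrow M$ is homotopic to a map that factors through $\mathcal O'$, which has trivial $\pi_1$; hence $\pi_1(\mathcal O)\to\pi_1(M)$ is trivial, as required. Applying Lemma~\ref{liftpoint}, $\pi_1(M)\to\pi_1(M/G)$ is injective, and since $G$ is connected it is surjective, so it is an isomorphism.

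The main obstacle is the bookkeeping in the middle step: one must be careful that the ``natural deformation map from an orbit in $M_{(H'),c}$ to an orbit in $M_{(H),c}$ by going to the closure'' is genuinely realized by a homotopy \emph{inside $M$} of the inclusion maps of orbits, not merely a homotopy of abstract orbits, and that chaining finitely many such closure-degenerations (if the stratification is deep) still lands at a simply connected orbit — here one should induct on the partial order of orbit types, starting from the fact that the hypothesis applies to every stratum including the ones reached in the closure. A secondary technical point, already implicit in the proof of Lemma~\ref{liftpoint}, is the existence of $G$-invariant tubular neighborhoods of orbits that equivariantly deformation retract onto the orbit; this is standard from the slice theorem for compact group actions on manifolds and needs no symplectic input, consistent with the fact that this proposition is stated for arbitrary smooth $G$-manifolds.
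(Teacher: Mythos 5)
Your proposal is correct and takes essentially the same route as the paper: reduce via Lemma~\ref{liftpoint} to showing that every loop lying in an orbit is null-homotopic in $M$, then deform the orbit within its orbit-type component and degenerate it, via the slice theorem and equivariant radial contraction in a tube, onto the simply connected orbit guaranteed in the closure. The only blemish is the reversed inclusion ``$M_{(H),c}\subset\overline{M_{(H'),c'}}$'', which should say that the stratum $M_{(H'),c'}$ containing $\mathcal O'$ meets $\overline{M_{(H),c}}$ --- the configuration your surrounding sentences and the radial-contraction step actually use (together with first moving $\mathcal O$ inside $M_{(H),c}$ into the tube, the same ``orbits in one component of an orbit type are homotopic'' fact you invoke for $M_{(H'),c'}$).
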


\begin{proof}
Since $G$ is connected, we have a surjection
$\pi_1(M)\twoheadrightarrow\pi_1\left(M/G\right).$

Now we prove injectivity.  Let $\bar x\in M/G$ be a trivial
loop. Let $\Tilde \alpha_x$  be a loop in $M$ which is a lift
of $\bar x$. Then $\Tilde \alpha_x$ lies in a $G$-orbit,  say
$\mathcal O_x$. Assume $\mathcal O_x$ is in a connected
component $M_{(H), c}$ of an orbit type $M_{(H)}$. We can
deform $\Tilde \alpha_x$ through orbits in $M_{(H), c}$ to a
loop $\Tilde \alpha'_x$ which lies in an orbit $\mathcal O'_x$,
where $\mathcal O'_x\subset M_{(H')}\subset\overline{M_{(H)}}$
and $M_{(H')}$ is in the ``direct'' closure of $M_{(H)}$. We
continue deforming the loop $\Tilde \alpha'_x$ as above inside
$\overline{M_{(H)}, c}$ until we get a loop $\Tilde \alpha^s_x$
which lies in a simply connected orbit $\mathcal
O^s_x\subset\overline{M_{(H)}, c}$. Since $\pi_1\left(\mathcal
O^s_x\right) = 1$, $[\Tilde \alpha_x] = [\Tilde \alpha'_x] =
\cdots = [\Tilde\alpha^s_x] = 1\in \pi_1(M)$. Now injectivity
follows from Lemma~\ref{liftpoint}.
\end{proof}

Here is a counter example to Proposition~\ref{propM=M/G}.

\begin{example}\label{ex2}
Consider the effective $S^1$ action on $\RP^2$ induced from the
standard $S^1$ action on $S^2$. There are $3$ orbit types: the
principal orbit type which consists of free orbits, one fixed
point, and an orbit which has $\Z_2$ as stabilizer. The $\Z_2$
orbit type does not have a simply connected orbit in its
closure.  We have
$\pi_1\left(\RP^2\right)\ncong\pi_1\left(\RP^2/S^1\right)$.
\end{example}

Next, we prove that the orbit $G\cdot x$ is simply connected if $x$
is fixed by a maximal torus.
\begin{proposition}\label{G/T}
Let $M$ be a connected smooth $G$-manifold.  If $x$ is a fixed point
of a maximal torus of $G$, then the orbit $G\cdot x$ is simply
connected.
\end{proposition}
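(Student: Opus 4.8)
The plan is to pass to the homogeneous model $G\cdot x\cong G/G_x$, where $G_x$ denotes the stabilizer of $x$, and to extract the fundamental group from the long exact homotopy sequence of the fibration $G_x\hookrightarrow G\to G/G_x$,
\[
\pi_1(G_x)\longrightarrow\pi_1(G)\longrightarrow\pi_1\left(G/G_x\right)\longrightarrow\pi_0(G_x)\longrightarrow\pi_0(G)=1,
\]
in which I will write $\iota_*\colon\pi_1(G_x)\to\pi_1(G)$ for the first map. Let $T$ be the maximal torus of $G$ fixing $x$, so that $T\subseteq G_x$.

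The first point to establish is purely structural: $T$ is also a maximal torus of the identity component $G_x^0$, since any torus contained in $G_x\subseteq G$ is a torus of $G$ and $T$ is already maximal there. Invoking the classical fact that the inclusion of a maximal torus of a compact connected Lie group is surjective on $\pi_1$, and applying it both to $T\hookrightarrow G_x^0$ and to $T\hookrightarrow G$, together with $\pi_1(G_x)=\pi_1(G_x^0)$ (the basepoint lies in the identity component) and the factorization $T\hookrightarrow G_x^0\hookrightarrow G$, I conclude that $\iota_*$ is surjective.

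Surjectivity of $\iota_*$ forces, by exactness, the map $\pi_1(G)\to\pi_1(G/G_x)$ to vanish, so $\pi_1(G\cdot x)$ injects into $\pi_0(G_x)$; since $\pi_0(G)=1$ this injection is onto, giving $\pi_1(G\cdot x)\cong\pi_0(G_x)$. The proof is thereby reduced to showing that the stabilizer $G_x$ is connected. This is exactly where the content lies when $G_x$ is not connected a priori, and it is the step I expect to be the main obstacle.

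For that step the plan is to exploit not just group theory but the fact that $x$ is a $T$-fixed point of an honest manifold. Given $g\in G_x$, the torus $gTg^{-1}$ is again a maximal torus of $G_x^0$, hence $G_x^0$-conjugate to $T$; so every connected component of $G_x$ is represented by some $w\in N_G(T)\cap G_x$, and it suffices to show that each such $w$ can be joined to the identity inside $G_x$. I would analyse the action of $w$ on a slice at $x$ --- equivalently on $T_xM$, where $T$ acts linearly because $x\in M^T$ --- using that $w$ must carry the $T$-weight decomposition of $T_xM$ to itself, preserve the tangent space $\mathfrak g\cdot x$ of the orbit, and be compatible with the orbit-type behaviour near $x$, in order to place $w$ in $G_x^0$. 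Carrying out this last analysis cleanly is the crux; everything before it is formal.
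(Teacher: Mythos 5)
Your formal reduction is correct, and up to that point it runs parallel to the paper's own argument: since $T\subseteq G_x^0$ and $\pi_1(T)\to\pi_1(G)$ is onto, the map $\pi_1(G_x)\to\pi_1(G)$ is onto, and the homotopy sequence of $G_x\hookrightarrow G\to G/G_x$ gives $\pi_1(G\cdot x)\cong\pi_0(G_x)$; equivalently, $G/G_x^0\to G/G_x$ is a finite covering with deck group $G_x/G_x^0$ and simply connected total space. But your proof stops at exactly the decisive step: you only outline a plan for showing that $G_x$ is connected and concede that carrying it out is the crux. That gap cannot be closed, because the stabilizer of a point fixed by a maximal torus need not be connected. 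For the standard action of $G=SO(3)$ on $\RP^2$ (lines through the origin in $\R^3$), the point $x=[e_3]$ is fixed by the maximal torus $T$ of rotations about the $z$-axis, yet $G_x=N(T)\cong O(2)$ has two components, and indeed $G\cdot x=\RP^2$ has $\pi_1\cong\Z_2\cong\pi_0(G_x)$, exactly as your reduction predicts. The same phenomenon occurs inside a compact Hamiltonian manifold: for $SO(3)$ acting on $\CP^2$, the point $x=[1:0:0]$ is fixed by a maximal torus, has stabilizer isomorphic to $O(2)$, and its orbit is the real locus $\RP^2$. So no analysis of the slice at $x$ can force the nonidentity component of $G_x$ into $G_x^0$.

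Comparing with the paper: the paper does not try to prove $H=G_x$ connected; for disconnected $H$ it applies Armstrong's theorem (Theorem~\ref{thmarm}) to the action of $H/H^0$ on $G/H^0$, asserting that each element has a fixed point. However, the action of $H/H^0$ on $G/H^0$ whose quotient is $G/H$ is the deck action of the covering $G/H^0\to G/H$, given by $gH^0\mapsto ghH^0$, and a deck action is free; the fixed points exhibited in the paper, $h\cdot(gH^0)=(hg)H^0=gH^0$ with $h=gh^0g^{-1}$, are fixed points of left translation by $h$, an action whose orbit space is the double coset space $H\backslash G/H^0$, not $G/H$. Applied to the correct (free) action, Armstrong's theorem returns $\pi_1(G/H)\cong H/H^0$, not $1$. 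Thus your correct computation $\pi_1(G\cdot x)\cong\pi_0(G_x)$ shows that Proposition~\ref{G/T} is equivalent to connectedness of the stabilizer, which fails in the examples above, and it pinpoints where the paper's treatment of the disconnected case goes astray. What your argument genuinely proves, and what can be salvaged, is the statement: $G\cdot x$ is simply connected if and only if $G_x$ is connected (as happens, for instance, when $G_x$ is the centralizer of a torus, e.g. for coadjoint orbits). To get the proposition as stated you would have to add such a hypothesis rather than hope to derive connectedness from the existence of a $T$-fixed point.
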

\begin{proof}
Let $H$ be the stabilizer of $x$, then by the assumption,
$T\subseteq H$, where $T$ is a maximal torus of $G$. First, we have
$G\cdot x\approx G/H$.

Assume first that $H$ is connected.
 Since we have a surjection (see for example Theorem 7.1 in \cite{BD})
$$\pi_1(T)\twoheadrightarrow\pi_1(G),$$
we have a surjection
$$\pi_1(H)\twoheadrightarrow\pi_1(G).$$
Then by the homotopy exact sequence of the fibration
$H\overset{\iota}\hookrightarrow G\to G/H$
$$\cdots \to\pi_1(H)\to \pi_1(G)\to\pi_1\left(G/H\right)\to\pi_0(H) = 1\to\cdots,$$
 we get $\pi_1\left(G/H\right)=1$.

Now assume that $H$ is not connected. Let $H^0$ be the identity
component of $H$. By the argument above,
$\pi_1\left(G/H^0\right)=1$. The finite group $H/H^0$ acts on
$G/H^0$, and $G/H = \left(G/H^0\right)/(H/H^0)$. Let $h$ be any
representative in $H/H^0$. Then $h$ is an element of some maximal
torus of $G$. Since $T\subseteq H^0$, there exists $g\in G$ such
that $h\in g H^0 g^{-1}$. Let $h = g h^0 g^{-1}$, where $h^0\in
H^0$. Then $h\cdot (gH^0) = g h^0 g^{-1} gH^0 = g H^0$, i.e., $gH^0$
is a fixed point of $h$. Since each element in $H/H^0$ has a fixed
point in $G/H^0$, by Theorem~\ref{thmarm} (for the $H/H^0$ action on
$G/H^0$), we get $\pi_1\left(G/H\right) = 1$.
\end{proof}

\section{Hamiltonian $G$-manifolds which have a simply connected orbit}\label{section:hamG-manifolds}

 In this section, we consider Hamiltonian $G$-manifolds. We
 will show that for a Hamiltonian $G$-manifold $M$, we can
 improve the condition in Proposition~\ref{propM=M/G} (for
$G$-manifolds) to the condition that
 there exists one simply connected orbit to make the isomorphism
 $\pi_1(M)\cong\pi_1(M/G)$ to hold.

Recall Example~\ref{ex2}, in which we have an $S^1$-manifold
$M$, the orbits do not represent the same element in
$\pi_1(M)$. For a Hamiltonian $S^1$-manifold $M$, we have the
following observation, which is a key step in improving the
condition in Proposition~\ref{propM=M/G}.

\begin{lemma}\label{Spi=}
Let $M$ be a connected Hamiltonian $S^1$-manifold. Then all
$S^1$ orbits are homotopic, i.e., they represent the same
element in $\pi_1(M)$.
\end{lemma}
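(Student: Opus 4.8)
The plan is to use the moment map $\phi\colon M\to\R$ (here $\mathfrak{g}^*\cong\R$ since $G=S^1$) to interpolate between any two $S^1$-orbits. Fix two orbits $\mathcal O_0 = S^1\cdot p_0$ and $\mathcal O_1 = S^1\cdot p_1$. Since $M$ is connected, choose a path $\gamma\colon[0,1]\to M$ from $p_0$ to $p_1$; then $S^1\cdot\gamma(t)$ gives a continuous family of orbits connecting $\mathcal O_0$ to $\mathcal O_1$. The issue is that along this family the orbit dimension can jump: at a fixed point of $S^1$ the orbit degenerates to a point, and at such a moment the homotopy class of ``the orbit'' is not even defined as a nontrivial loop. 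So the family of orbits alone does not immediately show the orbits are homotopic as loops.

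First I would reduce to the case where the path $\gamma$ avoids the fixed-point set except possibly at isolated parameter values, and handle the crossing of a fixed component separately. The key local input is the local normal form (Theorem~\ref{form}): near an $S^1$-fixed point $q$, a neighborhood looks like a symplectic vector space $V$ with a linear Hamiltonian $S^1$-action, and the $S^1$-orbits near $q$ lie on the spheres of the various weight subspaces. In such a linear model, every $S^1$-orbit through a point of $V\setminus\{q\}$ bounds a disk: indeed, scaling $v\mapsto sv$, $s\in[0,1]$, is an equivariant contraction of $V$ to $q$, and applying it to the orbit $S^1\cdot v$ sweeps out a disk (a cone on the orbit) with the orbit as boundary. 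Hence, in a neighborhood of the fixed component, every nearby orbit is null-homotopic. So if $\gamma$ crosses a fixed component, the orbits on either side of the crossing are each null-homotopic in $M$, hence homotopic to each other.

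Away from the fixed set, the orbit dimension is locally constant (it is $1$), and the family $t\mapsto S^1\cdot\gamma(t)$ is a continuous family of circles; by the slice theorem the orbit type is locally constant along the non-fixed locus on each orbit-type stratum, and on passing between strata of larger stabilizer $\Z_n$ one uses the deformation-to-the-closure map described in the text (the orbits in $M_{(H),c}$ and $M_{(H'),c}$ with $M_{(H),c}\subset\overline{M_{(H'),c}}$ are homotopic). In any case, a continuous family of loops in $M$ is a homotopy of loops, so any two orbits lying in a common connected component of the non-fixed locus, or separated only by such deformation maps, are freely homotopic. Combining this with the previous paragraph: along $\gamma$, break $[0,1]$ at the (closed, hence finitely coverable) times where $\gamma$ meets the fixed set; on each intervening open interval the orbits are all mutually homotopic by the family argument, and across each fixed-set crossing both adjacent orbits are null-homotopic, hence homotopic to each other. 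Transitivity then gives $[\mathcal O_0] = [\mathcal O_1]$ in $\pi_1(M)$, where the basepoints are moved along $\gamma$ in the usual way; since $M$ is connected, free homotopy suffices for the statement.

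The main obstacle is the bookkeeping at the fixed set: one must be sure that \emph{some} orbit survives near each crossing to serve as the bridge, and that the ``cone'' disk built from the linear scaling is genuinely a disk in $M$ (not just in the local chart) bounding the actual orbit. Both are clean consequences of Theorem~\ref{form} once one notes that the linear contraction is $S^1$-equivariant and the orbit $S^1\cdot v$ with $v\ne 0$ is a genuine circle; the only mild care needed is if $\gamma$ should happen to meet the fixed set on a whole subinterval, which one avoids by perturbing $\gamma$ off the fixed set (of positive codimension) wherever possible, or by simply noting that an orbit inside the fixed set is a point and trivially null-homotopic and arguing as above on a slightly larger neighborhood.
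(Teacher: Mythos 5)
The part of your argument near the fixed-point set is fine and corresponds to the easy case of the paper's proof: in the local model of Theorem~\ref{form} every orbit close to a fixed point bounds the cone swept out by the radial scaling, hence is null-homotopic, so orbits on the two sides of a crossing of the fixed set are homotopic. The genuine gap is at the orbits with finite nontrivial stabilizer $\Z_k$, $k\ge 2$, which you dispose of with ``a continuous family of circles is a homotopy of loops'' together with the deformation-to-the-closure map between strata. That deformation does not identify the orbits as loops: in the local model $S^1\times_{\Z_k}(\R\times V^H\times W)$ the retraction onto the central exceptional orbit $\mathcal O$ restricts, on a nearby principal orbit, to a covering of circles of degree $k$ (more precisely $k$ divided by the order of the principal stabilizer), and likewise the natural parametrizations of the family $t\mapsto S^1\cdot\gamma(t)$ converge to the loop traversing $\mathcal O$ $k$ times, not once. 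So your argument only yields that a principal orbit is homotopic to $k\cdot[\mathcal O]$, and an extra idea is needed to remove the multiplicity $k$. This is exactly the case to which the paper devotes its proof: it uses that the slice representation $W$ of $\Z_k$ is symplectic, hence splits into complex one-dimensional $\Z_k$-modules, and exploits the homeomorphism $\C/\Z_k\cong\C$ on each factor to compare $S^1\times_{\Z_k}\bigl(\R\times V^H\times\C\times\cdots\times\C\bigr)$ with $S^1\times_{\Z_k}\bigl(\R\times V^H\times\C/\Z_k\times\cdots\times\C/\Z_k\bigr)$, in which every point is $\Z_k$-fixed. That is also precisely where the Hamiltonian hypothesis enters; your proof never uses it away from the fixed set.

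Indeed, as written your argument proves too much: away from fixed points it invokes only the slice theorem and the stratification by orbit types, which exist for any smooth $S^1$-manifold. Applied to Example~\ref{ex2} (the induced action on $\RP^2$), with a path from the $\Z_2$-orbit to a principal orbit chosen to avoid the fixed point, it would conclude that the $\Z_2$-orbit (which generates $\pi_1(\RP^2)$) is homotopic to a principal orbit (which bounds a disk), which is false. So the missing ingredient is an argument, using the symplectic (complex) nature of the normal $\Z_k$-representation at the exceptional orbits, that eliminates the factor $k$; without it the proof does not go through.
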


\begin{proof}
Let $\mathcal O$ be an $S^1$-orbit. If $\mathcal O$ is a fixed
point, since it is in the closure of the generic orbit type,
$\mathcal O$ is homotopic to any generic orbit.

Now assume $\mathcal O\subset M_{\Z_k, c}\subset M_{\Z_k}$. We
will show that $\mathcal O$ is homotopic to a generic orbit. This
is trivial if $\mathcal O$ is a generic orbit. Now assume
$\mathcal O$ is not generic, or $\Z_k$ is not the generic
stabilizer. By the local normal form theorem, a neighborhood of
$\mathcal O$ in $M$ is isomorphic to $A=S^1\times_{\Z_k}
\big(\R\times V\big)=S^1\times_{\Z_k} \big(\R\times V^H\times
W\big) \cong S^1\times_{\Z_k} \big(\R\times V^H\times
\C\times\cdots\times\C\big)$, where $\Z_k$ acts on $\R$
trivially, $V$ is a symplectic representation of $\Z_k$, $V^H$
is the subspace of $V$ fixed by $\Z_k$, and
$W\cong\C\times\cdots\times\C$ is its complementary subspace in
$V$. We have $A_{\Z_k} =S^1\times_{\Z_k} \big(\R\times
V^H\big)$. The group $\Z_k$ acts on each copy $\C$ of $W$ as a
subgroup of $S^1$, so $\C/\Z_k$ is homeomorphic to $\C$. So $A'
= S^1\times_{\Z_k} \big(\R\times V^H\times
\C/\Z_k\times\cdots\times\C/\Z_k\big)$ is homeomorphic to $A=
S^1\times_{\Z_k} \big(\R\times V^H\times
\C\times\cdots\times\C\big)$.  In both $A$ and $A'$, $\mathcal
O = S^1\times_{\Z_k} 0$. Each point in $A'= S^1\times_{\Z_k}
\big(\R\times V^H\times \C/\Z_k\times\cdots\times\C/\Z_k\big)$
is fixed by $\Z_k$. Clearly in $A'$, $\mathcal O$ is homotopic
to any $S^1$ orbit, in particular to any orbit which comes from
a generic orbit in $A$. We lift the homotopy to $A$ (through
the homeomorphism) to get a homotopy from $\mathcal O$ to a
generic orbit.
\end{proof}

A direct consequence of Lemma~\ref{Spi=} is that in a
Hamiltonian $S^1$-manifold, if the $S^1$ action has a fixed
point, then all the $S^1$-orbits are homotopically trivial.
More generally, we have the following result.

\begin{lemma}\label{Gpi=}
Let $M$ be a connected Hamiltonian $G$-manifold. Assume there
is a simply connected orbit, then each loop in each orbit is
homotopically trivial in $M$.
\end{lemma}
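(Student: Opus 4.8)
The plan is to reduce the statement about an arbitrary orbit to the $S^1$-case handled in Lemma~\ref{Spi=}, using the connectedness of $G$ together with the hypothesis that one orbit $G\cdot x_0$ is simply connected. First I would observe that since $G$ is connected, any loop contained in an orbit $G\cdot p$ is homotopic in $M$ to a loop of the form $t\mapsto \exp(t\xi)\cdot p$ for a suitable $\xi$ in the Lie algebra $\mathfrak g$ (a closed one-parameter subgroup, after reparametrizing): indeed a loop in $G\cdot p$ lifts to a path in $G$ from $e$ to some element of the stabilizer $G_p$, and one can deform it within $G$; then push down to $G\cdot p$. So it suffices to show that every loop of the form $\gamma_\xi(t)=\exp(t\xi)\cdot p$ is null-homotopic in $M$. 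If $\xi$ generates a circle subgroup $S^1_\xi\subseteq G$, then $\gamma_\xi$ is (a multiple of) an $S^1_\xi$-orbit through $p$, and restricting the $G$-action to $S^1_\xi$ makes $M$ a Hamiltonian $S^1_\xi$-manifold (compose $\phi$ with the projection $\mathfrak g^*\to\mathfrak s^*$). By Lemma~\ref{Spi=}, all $S^1_\xi$-orbits in $M$ are homotopic to each other in $M$.

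Next I would use the simply connected orbit $G\cdot x_0$ to kill the resulting homotopy class. Since $G\cdot x_0$ is a $G$-orbit, it contains the $S^1_\xi$-orbit $S^1_\xi\cdot x_0$, and $\pi_1(G\cdot x_0)=1$ forces $[S^1_\xi\cdot x_0]=1$ in $\pi_1(M)$. Combined with the previous paragraph, $[\gamma_\xi] = [S^1_\xi\cdot x_0]=1$ in $\pi_1(M)$. For a general $\xi$ (one whose closure is a torus of rank $>1$), I would approximate: the set of $\xi\in\mathfrak g$ generating a circle is dense, and a small perturbation of $\xi$ produces a loop freely homotopic to $\gamma_\xi$ in the orbit $G\cdot p$ (the orbit is a fixed compact manifold and the loops vary continuously with $\xi$), so $[\gamma_\xi]=1$ as well. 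Running this for every $p$ and every $\xi$ gives the claim.

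The main obstacle I anticipate is making the first reduction airtight: passing from an \emph{arbitrary} loop in $G\cdot p$ to a one-parameter-subgroup loop, and then from an arbitrary $\xi$ to one generating a circle, both require a clean homotopy argument inside $G$ and inside the (possibly nonabelian, non-simply-connected) orbit $G\cdot p\cong G/G_p$. One has to be careful that the lift of the loop to $G$ ends in $G_p$, which need not be connected, so one only gets a loop after composing with the quotient $G\to G/G_p$; but that is exactly what is needed, since we work in $G\cdot p$. I would also double-check that restricting the Hamiltonian structure to a circle subgroup genuinely yields a Hamiltonian $S^1$-manifold in the sense required by Lemma~\ref{Spi=} — this is standard (the moment map for $S^1_\xi$ is the $\xi$-component of $\phi$), but it is the hinge on which the whole argument turns. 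Everything else is a routine application of connectedness of $G$, the covering-homotopy properties of the fibration $G_p\hookrightarrow G\to G/G_p$, and Lemma~\ref{Spi=}.
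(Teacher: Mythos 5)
Your strategy (reduce an arbitrary loop in an orbit to a one-parameter loop $t\mapsto\exp(t\xi)\cdot p$, handle circle-generating $\xi$ via Lemma~\ref{Spi=}, and kill the class inside the simply connected orbit) is a genuinely different route from the paper, which instead deforms loops through orbit-type strata, uses Proposition~\ref{G/T} to place the stabilizer inside a maximal torus in the remaining case, passes to the $T$-orbit via the fibration of $G\cdot p$ over $G/T$, and only then splits into circles and applies Lemma~\ref{Spi=}. But your treatment of the case where $\xi$ does not generate a circle has a genuine gap: the curve $t\mapsto\exp(t\xi')\cdot p$ is a loop only when $\exp(\xi')\in G_p$, and a generic small perturbation of $\xi$ destroys this closure condition. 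So the density of circle-generating directions in $\mathfrak g$ is not the relevant statement, and the claim that ``a small perturbation of $\xi$ produces a loop freely homotopic to $\gamma_\xi$'' fails as written; you do not get a loop at all, let alone a homotopic one.

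The step can be repaired, but it needs an argument you have not supplied: let $h=\exp(\xi)$ and let $S$ be the identity component of the closure of the cyclic group generated by $h$; then $S\subseteq G_p$, so for every $\xi'=\xi+s$ with $s\in\mathrm{Lie}(S)$ the curve $\gamma_{\xi'}$ is still a loop at $p$, and one must show that rational (circle-generating) vectors are dense in the affine subspace $\xi+\mathrm{Lie}(S)$. This holds because $h$ has finite order modulo $S$, so $\xi$ is a rational point modulo $\mathrm{Lie}(S)$ with respect to the integral lattice of a maximal torus containing the closure of $\{\exp(t\xi)\}$ --- a small but essential verification; only after it do you get nearby genuine loops, $C^0$-close and hence freely homotopic to $\gamma_\xi$, to which your circle argument applies. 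Separately, your first reduction (from an arbitrary loop in $G\cdot p$ to some $\exp(t\xi)\cdot p$) silently requires that the one-parameter paths in a maximal torus through $h$ realize every rel-endpoints homotopy class of paths in $G$ from $e$ to $h$, which rests on the surjectivity of $\pi_1(T)\to\pi_1(G)$; you flag this, and the needed fact is exactly the one invoked in the paper's proof of Proposition~\ref{G/T}, so that part is under-justified rather than wrong. With these two points filled in, your argument would give a correct alternative proof.
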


\begin{proof}
Let $\alpha$ be a loop contained in an orbit $\mathcal O$. If
$\mathcal O$ is simply connected, then $\alpha$ is trivial
inside $\mathcal O$.  If $\mathcal O$ is in a connected
component $M_{(H), c}$ of an orbit type $M_{(H)}$, and the
closure of $M_{(H), c}$ contains the simply connected orbit,
then we can deform $\alpha$ through orbits in $M_{(H), c}$ to a
loop $\alpha'$ in the simply connected orbit, so $[\alpha]
=[\alpha'] = 1$.

Now, assume $\mathcal O$ is in a connected component $M_{(H),
c}$ of an orbit type $M_{(H)}$, and the closure of $M_{(H), c}$
does not contain the simply connected orbit. By
Proposition~\ref{G/T}, $H\subsetneq T$, where $T$ is a maximal
torus of $G$. Moreover, $H$ is not the generic stabilizer, and
the generic stabilizer $(H') \subsetneq (H)\subsetneq (T)$.
Take $x\in\mathcal O$ such that its stabilizer is $H$.   Let
$\mathcal O_T = T\cdot x$ be the $T$ orbit. Then $\mathcal O$
fibers over $G/T$ with fiber $\mathcal O_T$, and hence
$\pi_1\left(\mathcal
O_T\right)\twoheadrightarrow\pi_1\left(\mathcal O\right)$. So
$\alpha$ is homotopic to a loop $\beta$ in $\mathcal O_T$.
Since we can split $T$ into a product of circles and the loops
in $\mathcal O_T$ are generated by the circles in
$T/H\approx\mathcal O_T$, we may assume that $\beta$ is an
$S^1\subset T/H$ orbit (or a multiple of it) and consider the
$S^1$ action. By Lemma~\ref{Spi=} for the $S^1$ action, $\beta$
is homotopic to an $S^1$-orbit $\beta'$ which is contained in
$T/H'$ and hence contained in a generic $G$-orbit. Since the
simply connected $G$-orbit is in the closure of the generic
orbit type, we can deform the loop $\beta'$ to a loop in the
simply connected $G$-orbit. So $[\alpha] =[\beta] = [\beta'] =
1$.
\end{proof}

\begin{proposition}\label{HamM=M/G}
 Let $M$ be a connected Hamiltonian $G$-manifold.
 Assume that there exists a simply connected orbit.
  Then the quotient map induces an isomorphism on $\pi_1$, i.e., $\pi_1(M)\cong\pi_1\left(M/G\right).$
 \end{proposition}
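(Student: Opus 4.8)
The plan is to deduce Proposition~\ref{HamM=M/G} from Lemma~\ref{liftpoint} together with the key input Lemma~\ref{Gpi=}, bypassing the stronger orbit-type hypothesis of Proposition~\ref{propM=M/G}. Since $G$ is connected, the quotient map $M\to M/G$ induces a surjection $\pi_1(M)\twoheadrightarrow\pi_1(M/G)$, so the only thing to prove is injectivity. By Lemma~\ref{liftpoint}, injectivity holds if and only if for each point $\bar x\in M/G$, every loop $\Tilde\alpha_x$ in $M$ which is a lift of the trivial loop at $\bar x$ satisfies $[\Tilde\alpha_x]=1\in\pi_1(M)$. But such a lifted loop $\Tilde\alpha_x$ is by construction contained in a single $G$-orbit $\mathcal{O}_x$ (the fiber over $\bar x$). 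Since $M$ is a Hamiltonian $G$-manifold with a simply connected orbit, Lemma~\ref{Gpi=} applies and tells us precisely that every loop contained in any orbit is homotopically trivial in $M$; in particular $[\Tilde\alpha_x]=1\in\pi_1(M)$.

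So the proof is essentially two lines: surjectivity from connectedness of $G$, and injectivity by combining Lemma~\ref{liftpoint} with Lemma~\ref{Gpi=}. I would write it as follows.

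\begin{proof}
Since $G$ is connected, the quotient map induces a surjection
$\pi_1(M)\twoheadrightarrow\pi_1\left(M/G\right)$. It remains to prove injectivity. By Lemma~\ref{liftpoint}, it suffices to show that for each point $\bar x\in M/G$, any loop $\Tilde\alpha_x$ in $M$ which is a lift of the trivial loop $\bar x$ satisfies $[\Tilde\alpha_x]=1\in\pi_1(M)$. Such a loop $\Tilde\alpha_x$ is contained in the orbit over $\bar x$. Since $M$ is a Hamiltonian $G$-manifold with a simply connected orbit, Lemma~\ref{Gpi=} shows that every loop contained in an orbit is homotopically trivial in $M$; hence $[\Tilde\alpha_x]=1\in\pi_1(M)$. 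Injectivity now follows from Lemma~\ref{liftpoint}, so $\pi_1(M)\cong\pi_1\left(M/G\right)$.
\end{proof}

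There is essentially no obstacle here, because all the real work has already been done: the geometric content is concentrated in Lemma~\ref{Spi=} (homotopy of $S^1$-orbits via the local normal form and the homeomorphism $\mathbb{C}/\Z_k\cong\mathbb{C}$), in Lemma~\ref{Gpi=} (reduction from an arbitrary orbit to a torus orbit, then to circle orbits, then invoking Lemma~\ref{Spi=} and the simply connected orbit sitting in the closure of the principal orbit type), and in Proposition~\ref{G/T} (orbits of points fixed by a maximal torus are simply connected, used inside Lemma~\ref{Gpi=}). The only point worth double-checking is the claim that a lift of a constant loop lies entirely in one orbit; this is immediate because the fiber of $M\to M/G$ over $\bar x$ is exactly the orbit $\mathcal{O}_x$, and a path mapping to the constant path $\bar x$ must stay in that fiber. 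Thus the main conceptual step — improving ``enough simply connected orbits'' (Proposition~\ref{propM=M/G}) to ``one simply connected orbit'' — is precisely the Hamiltonian-specific input furnished by Lemmas~\ref{Spi=} and \ref{Gpi=}, and the present proposition is just the packaging of that input through Lemma~\ref{liftpoint}.
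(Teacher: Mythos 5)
Your proof is correct and coincides with the paper's own argument: surjectivity from connectedness of $G$, and injectivity by feeding Lemma~\ref{Gpi=} (every loop in an orbit is null-homotopic in $M$) into the criterion of Lemma~\ref{liftpoint}. Your extra remark that a lift of a constant loop stays in a single orbit is exactly the observation implicit in the paper's two-line proof.
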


\begin{proof}
Since $G$ is connected, we have a surjection
$\pi_1(M)\twoheadrightarrow\pi_1\left(M/G\right).$

Injectivity of the map follows from Lemmas~\ref{liftpoint} and
\ref{Gpi=}.
\end{proof}

\section{Proof  of Theorem~\ref{M=M/G}}\label{section:M=M/G}
 Theorem~\ref{M=M/G}  (A)  is  Proposition~\ref{HamM=M/G}.

 Theorem~\ref{M=M/G} (B) follows from Theorem~\ref{hatG} with $\Hat G =G$,
 Theorem~\ref{M=M/G} (A) and the lemma below.

\begin{lemma}\label{levelset}
Let $M$ be a connected Hamiltonian $G$-manifold with proper
equivariant moment map $\phi$. Assume there is a simply
connected orbit $G\cdot x$.
 Then
$$\pi_1\left(\phi^{-1}(G\cdot a)\right)\cong\pi_1\left(M_a \right), \,\,\,\mbox{where}\,\,\, a=\phi(x).$$
\end{lemma}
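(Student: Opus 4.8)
The goal is to relate $\pi_1$ of the preimage $\phi^{-1}(G\cdot a)$ of a coadjoint orbit to $\pi_1$ of the reduced space $M_a = \phi^{-1}(G\cdot a)/G = \phi^{-1}(a)/G_a$. The natural tool is the quotient map $\phi^{-1}(G\cdot a)\to M_a$ and Proposition~\ref{HamM=M/G}, which says that for a connected Hamiltonian $G$-manifold with a simply connected orbit, the quotient by $G$ induces a $\pi_1$-isomorphism. The plan is therefore to realize $\phi^{-1}(G\cdot a)$, or a deformation retract of it, as (the total space of) a Hamiltonian $G$-action admitting a simply connected orbit, and then apply Proposition~\ref{HamM=M/G}. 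The difficulty is that $\phi^{-1}(G\cdot a)$ is not a symplectic manifold and not even a manifold in general, so one cannot invoke Proposition~\ref{HamM=M/G} directly on it — one must first replace it by something to which the machinery applies.

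The concrete strategy I would use: by the last bullet after Theorem~\ref{cross}, since $\phi$ is proper and $a\in\operatorname{im}(\phi)$, there is a $G$-invariant open neighborhood $\mathcal N$ of $\phi^{-1}(G\cdot a)$ in $M$ that $G$-equivariantly deformation retracts onto $\phi^{-1}(G\cdot a)$. This $\mathcal N$ is an open subset of $M$, hence itself a connected(?) Hamiltonian $G$-manifold with moment map $\phi|_{\mathcal N}$; the equivariant retraction gives both $\pi_1(\mathcal N)\cong\pi_1(\phi^{-1}(G\cdot a))$ and, passing to quotients, $\pi_1(\mathcal N/G)\cong\pi_1(\phi^{-1}(G\cdot a)/G)=\pi_1(M_a)$. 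First I would check that $\mathcal N$ can be taken connected (shrink around the connected set $\phi^{-1}(G\cdot a)$, which is connected by Theorem~\ref{convexity}(1)) and that it still contains the simply connected orbit $G\cdot x$, which it does since $x\in\phi^{-1}(a)\subset\phi^{-1}(G\cdot a)\subset\mathcal N$. Now $\mathcal N$ is a connected Hamiltonian $G$-manifold containing the simply connected orbit $G\cdot x$, so Proposition~\ref{HamM=M/G} gives $\pi_1(\mathcal N)\cong\pi_1(\mathcal N/G)$. Chaining the three isomorphisms yields
$$\pi_1\left(\phi^{-1}(G\cdot a)\right)\cong\pi_1(\mathcal N)\cong\pi_1(\mathcal N/G)\cong\pi_1\left(M_a\right),$$
as desired.

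The main obstacle I anticipate is the step asserting that the $G$-equivariant deformation retraction of $\mathcal N$ onto $\phi^{-1}(G\cdot a)$ descends to a deformation retraction of $\mathcal N/G$ onto $\phi^{-1}(G\cdot a)/G=M_a$; this is formally immediate from equivariance (a $G$-equivariant homotopy induces a homotopy on orbit spaces), but one should make sure the induced maps are continuous for the quotient topology, which holds because $G$ is compact and orbit maps are open. A secondary point to verify carefully is the construction of $\mathcal N$: Theorem~\ref{retract} is stated for $\phi^{-1}(0)$, so one applies it in the cross section $R=\phi^{-1}(U_a)$ at the value $a$ (after translating $a$ to $0$ if needed) to get a $G_a$-invariant retracting neighborhood of $\phi^{-1}(a)$ in $R$, and then saturates by $G$ — exactly as spelled out in the second bullet following Theorem~\ref{cross} — to obtain the $G$-invariant $\mathcal N$ in $M$. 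Apart from these topological bookkeeping points the argument is a short chain of natural isomorphisms, consistent with the remark that all isomorphisms in the paper are natural ones coming from retractions, inclusions, and group quotients.
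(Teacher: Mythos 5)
Your argument is essentially identical to the paper's proof: take the $G$-invariant neighborhood $\mathcal N$ of $\phi^{-1}(G\cdot a)$ from Theorem~\ref{retract} (via the cross section), note it contains the simply connected orbit $G\cdot x$, apply Proposition~\ref{HamM=M/G} to $\mathcal N$, and transfer the isomorphism through the equivariant retraction on both $\mathcal N$ and $\mathcal N/G$. Your additional checks (connectedness of $\mathcal N$, descent of the equivariant homotopy to the quotient) are exactly the points the paper leaves implicit, so the proposal is correct.
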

\begin{proof}
By Theorem~\ref{retract}, there is an open invariant neighborhood
$\mathcal N$ of $\phi^{-1}(G\cdot a)$ in $M$ which equivariantly
deformation retracts to $\phi^{-1}(G\cdot a)$. By
Proposition~\ref{HamM=M/G}, $\pi_1\left(\mathcal
N\right)\cong\pi_1\left(\mathcal N/G \right)$ which is equivalent to
$\pi_1\left(\phi^{-1}(G\cdot a)\right)\cong\pi_1\left(M_a \right).$
\end{proof}

\section{Proof of Theorem~\ref{M=M/G+}}\label{section:M=M/G+}

When a maximal torus of $G$ has a fixed point,  there is a simply connected orbit by Proposition~\ref{G/T}.
 Besides Lemma~\ref{levelset}, we have Lemma~\ref{xaa}.

\begin{lemma}\label{xaa}
Let $M$ be a connected Hamiltonian $G$-manifold with proper
equivariant moment map $\phi$. If a maximal torus of $G$ has a
fixed point $x$, then
$\pi_1\left(\phi^{-1}(a)\right)\cong\pi_1(M_a)$, where
$a=\phi(x)$.
\end{lemma}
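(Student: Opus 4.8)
The goal is to prove Lemma~\ref{xaa}: if a maximal torus $T$ of $G$ has a fixed point $x$, and $\phi$ is proper, then $\pi_1\left(\phi^{-1}(a)\right)\cong\pi_1(M_a)$, where $a=\phi(x)$. Here $M_a = \phi^{-1}(G\cdot a)/G = \phi^{-1}(a)/G_a$, so the claim is equivalent to $\pi_1\left(\phi^{-1}(a)\right)\cong\pi_1\left(\phi^{-1}(a)/G_a\right)$. The natural plan is to apply Proposition~\ref{HamM=M/G} (or rather Proposition~\ref{propM=M/G}) to the $G_a$-action on a suitable space built from $\phi^{-1}(a)$, after first cutting down to the cross section where the $G_a$-orbits are isotropic.

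**The reduction.** First I would pass to the cross section. Let $\tau\subset\mathfrak{t}_+^*$ be the open face containing $a$, let $U_a$ be the natural slice at $a$, and let $R=\phi^{-1}(U_a)$ be the cross section, a $G_a$-invariant symplectic submanifold with moment map $\phi_R = \phi|_R$ for the $G_a$-action (Theorem~\ref{cross}). The $G_a$-orbits in $R$ are isotropic. Since $x$ is fixed by $T$ and $T\subseteq G_a$, the orbit $G_a\cdot x$ is simply connected by Proposition~\ref{G/T} (applied to the $G_a$-manifold $R$, with $T$ a maximal torus of $G_a$ as well). Now $\phi_R$ is proper as a map onto its image — at least locally near $a$ — so by Theorem~\ref{retract} there is a $G_a$-invariant open neighborhood $\mathcal{N}$ of $\phi_R^{-1}(a) = \phi^{-1}(a)$ in $R$ which $G_a$-equivariantly deformation retracts onto $\phi^{-1}(a)$. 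Note $\phi_R^{-1}(a)=\phi^{-1}(a)$ because $a$ lies on the face $\tau$ and $U_a\cap G\cdot a$ is just the point $a$ (the slice meets the coadjoint orbit only at $a$). Thus $\pi_1\left(\phi^{-1}(a)\right)\cong\pi_1(\mathcal{N})$ and $\pi_1\left(\phi^{-1}(a)/G_a\right)\cong\pi_1(\mathcal{N}/G_a)$, so it suffices to show $\pi_1(\mathcal{N})\cong\pi_1(\mathcal{N}/G_a)$.

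**Applying the $G$-manifold results.** Now $\mathcal{N}$ is a $G_a$-manifold containing the simply connected orbit $G_a\cdot x$, since $x\in\phi^{-1}(a)\subset\mathcal{N}$. If $\mathcal{N}$ with its symplectic structure were automatically a Hamiltonian $G_a$-manifold in the sense needed, I could invoke Proposition~\ref{HamM=M/G} directly. Since $\mathcal{N}$ is open in the symplectic manifold $R$ and $G_a$-invariant, $\phi_R|_{\mathcal{N}}$ is indeed a moment map, so $\mathcal{N}$ is a connected Hamiltonian $G_a$-manifold (connectedness of $\mathcal{N}$ can be arranged, or one works with the connected component containing $x$; alternatively connectivity of $\phi^{-1}(G\cdot a)$ from Theorem~\ref{convexity} plus equivariance helps here). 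Then Proposition~\ref{HamM=M/G} gives $\pi_1(\mathcal{N})\cong\pi_1(\mathcal{N}/G_a)$, completing the proof. One subtlety to handle carefully: the isomorphism $\pi_1\left(\phi^{-1}(a)/G_a\right)\cong\pi_1(M_a)$ is by definition $M_a=\phi^{-1}(a)/G_a$, which is standard, but I should make sure the quotient $\mathcal N/G_a$ deformation retracts onto $\phi^{-1}(a)/G_a$ compatibly — this follows by passing the $G_a$-equivariant retraction of $\mathcal N$ to the quotient.

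**Main obstacle.** The delicate point is the connectedness hypothesis in Proposition~\ref{HamM=M/G}: I need $\mathcal{N}$ (or the relevant piece of it) to be connected. The cleanest fix is to take $\mathcal{N}$ to be a $G_a$-invariant neighborhood that deformation retracts onto $\phi^{-1}(a)$; then $\mathcal{N}$ is connected iff $\phi^{-1}(a)$ is, and $\phi^{-1}(a)$ is connected because $\phi^{-1}(G\cdot a)$ is connected (Theorem~\ref{convexity}(1)) and $\phi^{-1}(G\cdot a) = G\times_{G_a}\phi^{-1}(a)$ fibers over the connected $G/G_a$ with fiber $\phi^{-1}(a)$ — but this last step needs $\phi^{-1}(a)$ connected to conclude, so the correct logic runs the other way: connectivity of the fiber $\phi^{-1}(a)$ is itself part of the standard consequences of the connectivity theorem (it is where one typically proves it by a Morse-theoretic or cross-section induction). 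I would cite this, or argue it via the cross section $R$ where $\phi_R^{-1}(a)=\phi^{-1}(a)$ and apply the abelian/connected-fiber statement there. Modulo this bookkeeping, the argument is a direct combination of the cross section theorem, the local deformation retraction, and Proposition~\ref{HamM=M/G}.
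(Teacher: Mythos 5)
Your argument is essentially the paper's: pass to the cross section $R=\phi^{-1}(U_a)$ with its $G_a$-action, note that $x$ is fixed by a maximal torus of $G_a$ so $G_a\cdot x$ is simply connected by Proposition~\ref{G/T}, and then combine the local equivariant retraction of Theorem~\ref{retract} with Proposition~\ref{HamM=M/G} --- which is precisely Lemma~\ref{levelset} applied to the $G_a$-action on $R$ (where $G_a\cdot a=\{a\}$ since $a$ is central for $\mathfrak g_a^*$), the route the paper takes. The connectedness bookkeeping you flag is handled correctly (fiber connectivity via the connectivity theorem in the cross section), so the proposal is correct and coincides with the paper's proof up to unfolding Lemma~\ref{levelset}.
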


\begin{proof}
Let $\tau$ be the face of $\mathfrak t^*_+$ such that $a\in\tau$.
Let $G_a$ be the stabilizer group of $a$. Consider the $G_a$ action
on the cross section $R^{\tau}$. Then $x\in R^{\tau}$ is a fixed
point of a maximal torus of $G_a$. By Proposition~\ref{G/T}, the
orbit $G_a\cdot x$ is simply connected. Now the claim follows from
Lemma~\ref{levelset} for the $G_a$ action on $R^{\tau}$.
\end{proof}

\begin{proof}
[Proof of Theorem~\ref{M=M/G+}]
(A).  Let $T\subset G$ be the maximal torus of $G$ which  fixes $x$.
  Let $K\subset G$ be a connected subgroup. Then there is $g\in G$ such that
  $K\cap g T g^{-1}$ is a maximal torus of $K$ which fixes $g\cdot x$.   By  Proposition~\ref{G/T},
 the $K$ action has a simply connected orbit.
  Moreover, a Hamiltonian $G$-manifold is a Hamiltonian
  $K$-manifold.  Then  Theorem~\ref{M=M/G}  (A)  gives
  $$\pi_1(M)\cong\pi_1\left(M/K\right).$$
(B). By  Proposition~\ref{G/T},  the $G$ action has a simply
connected orbit $G\cdot x$. Lemmas~\ref{levelset} and
\ref{xaa}, Theorem~\ref{hatG} with $\Hat G = G$, and
Theorem~\ref{M=M/G+} (A) with $K=G$  then imply
$$\pi_1(M)\cong\pi_1\left(\phi^{-1}(G\cdot a)\right)\cong \pi_1\left(\phi^{-1}(a)\right).$$
 \end{proof}

\section{Proof of Theorem~\ref{hatG}}\label{section:hat}

 The method of removing and deforming we used in \cite{L2} to prove
  $$\pi_1(M/G)\cong\pi_1(M_a) \quad\mbox{for all $a\in \,$im$(\phi)$}$$
 for compact  manifold $M$  can be used in more  generality.  The method
 involves  local deformation retractions  as  stated  in Theorem~\ref{retract},  which relies on
 the properness of $\phi$.
The method also involves removing certain  strata from quotient
stratified spaces. If the links of these strata are connected
and simply connected,  we can use Lemma~\ref{remove:str} to do
removing. The computation of the links of the strata to be removed uses  the local normal form theorem
  and the cross section theorem. The proof of the one-connectedness of the links
   uses the property of the coadjoint action and  the normal representations of the orbit
  types from which the strata came.

 The proof of Theorem~\ref{hatG} consists of a few steps. We first prove that all the
 quotients $\phi^{-1}(G\cdot a)/\Hat G$'s  have isomorphic fundamental group, and then we prove
 that the fundamental group of $M/\Hat G$ is isomorphic to that of a
 particular quotient.\\

 \subsection{Proof of Theorem~\ref{hatG} for $G=T$}\label{section:hatT}
 \
\medskip

 In this subsection, we prove Theorem~\ref{hatG} for $G=T$, where $T$
 is a connected compact torus.  For $G=T$,
 Lemma~\ref{Tlinksame} is the key observation which
 makes the removing process work.\\

First, we have
 Lemma~\ref{Tisom} for all regular values in the same chamber.

\begin{lemma}\label{Tisom}
 Let $(M, \omega)$ be a connected Hamiltonian $T$-manifold
with  moment map $\phi$. Let $T'\subset T$ be a connected
subgroup. Then for all values $a$'s in one connected chamber of
im$(\phi)$, $\pi_1\left(\phi^{-1}(a)/T'\right)$'s are isomorphic.
\end{lemma}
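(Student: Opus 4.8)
The plan is to prove that along a single chamber the level sets $\phi^{-1}(a)$ form a $T$-equivariant fiber bundle, so that any two of them are $T$-equivariantly homeomorphic; passing to the $T'$-quotient then gives a homeomorphism $\phi^{-1}(a)/T'\cong\phi^{-1}(a')/T'$ and hence the claimed isomorphism on $\pi_1$. Let $\mathcal C$ be a connected chamber of $\mbox{im}(\phi)$, that is, a connected open face of the locally polyhedral set $\phi(M)$ (Theorem~\ref{convexity}; recall $\phi(M)\cap\mathfrak t_+^*=\phi(M)$ for a torus). Near each of its points $\mathcal C$ is an open piece of the affine subspace directed by $\mathfrak h^\circ$, where $\mathfrak h$ is the stabilizer Lie algebra common to all points of $\phi^{-1}(\mathcal C)$. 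I would first record this, together with the path connectedness of $\mathcal C$.

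To get the local structure, fix $a\in\mathcal C$ and a direction $\xi\in\mathfrak h^\circ$. Around any $p\in\phi^{-1}(a)$ the orbit $T\cdot p$ is isotropic, so Theorem~\ref{form} supplies a model $T\times_H(\mathfrak h^\circ\times V)$ with moment map $[g,b,v]\mapsto a+b+\psi(v)$, where $\psi$ is the moment map of the $H$-action on $V$. Reading off this formula, for $c\in\mathfrak h^\circ$ near $0$ the level set $\phi^{-1}(a+c)$ meets the model in $T\times_H(\{c\}\times\psi^{-1}(0))$, which is independent of $c$ up to the evident identification. In particular the translation vector field $\frac{d}{dt}\big|_{t=0}[g,b+t\xi,v]$ is $T$-invariant, complete on the model, and satisfies $d\phi(\cdot)=\xi$. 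Gluing these local fields by a $T$-invariant partition of unity over a neighborhood of $\phi^{-1}(a)$ yields a $T$-invariant vector field $X$ with $d\phi(X)=\xi$.

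Now I would flow. A trajectory $\Phi_t(x)$ of $X$ with $x\in\phi^{-1}(a)$ satisfies $\phi(\Phi_t(x))=a+t\xi$, so it passes through the level sets $\phi^{-1}(a+t\xi)$. For small $\epsilon$ the segment $\{a+t\xi:|t|\le\epsilon\}$ is a compact subset of $\mbox{im}(\phi)$, so properness of $\phi$ confines these trajectories to a fixed compact set; hence the flow is defined for $|t|\le\epsilon$ on all of $\phi^{-1}(a)$, and $\Phi_\epsilon\colon\phi^{-1}(a)\to\phi^{-1}(a+\epsilon\xi)$ is a $T$-equivariant homeomorphism. Joining a given $a'$ to $a$ by a path in $\mathcal C$ and covering it by finitely many such elementary moves produces a $T$-equivariant homeomorphism $\phi^{-1}(a)\cong\phi^{-1}(a')$, which descends to $\phi^{-1}(a)/T'\cong\phi^{-1}(a')/T'$, whence $\pi_1(\phi^{-1}(a)/T')\cong\pi_1(\phi^{-1}(a')/T')$.

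The step I expect to be the main obstacle is this globalization: the normal form only controls a neighborhood of one orbit, and the patched field $X$ need not be complete, so moving a possibly noncompact level set requires genuinely ruling out trajectories that run off to infinity along a level set — and it is precisely here that properness of $\phi$ (or properness onto its image, which is all that is used) enters. A smaller point to pin down first is that $\mathfrak h$, and with it the local model, is constant over $\mathcal C$ even though $\mathcal C$ may be assembled from sub-strata with different finite stabilizers: a point of $\phi^{-1}(a)$ with a strictly larger stabilizer Lie algebra would force $a$ onto a lower-dimensional face, contradicting $a\in\mathcal C$, while over a finite stabilizer the extra term $\psi$ vanishes and the level-set description above is unchanged.
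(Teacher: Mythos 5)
Your proposal is correct and follows essentially the same route as the paper: the paper's entire proof is the single assertion that for all $a$ in one chamber the level sets $\phi^{-1}(a)$ are $T$-equivariantly diffeomorphic, which is precisely the fact you establish in detail (local normal form, invariant patching, Ehresmann-type flow) before passing to the $T'$-quotient. One worthwhile observation: your invocation of properness is not a superfluous extra hypothesis --- without properness (or properness onto the image, as in the paper's remark) the equivariant diffeomorphism of level sets over a chamber can genuinely fail for noncompact $M$, and the lemma is only applied in the paper under the standing properness assumption of Theorem~\ref{hatG}, so your proof correctly identifies where that hypothesis enters.
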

\begin{proof}
This is because  for all values $a$'s  in one connected  chamber of
im$(\phi)$, $\phi^{-1}(a)$'s are $T$-equivariantly diffeomorphic.
\end{proof}

\begin{lemma}\label{Tlinksame}
  Let $(M, \omega)$ be a connected Hamiltonian $T$-manifold
with proper moment map $\phi$. Let $\Th\subset T$ be a connected
subgroup which contains the identity component of each stabilizer
group. Let $\mathcal F$ be a singular face on im$(\phi)$. Let
$\overline U$ be the closure of one open connected chamber $U$ such
that $\mathcal F \subset\overline U$.  Let $N$ be $M$ or be
$\phi^{-1}(\overline U)$. For each orbit type $M_H$ such that
$M_{H}\cap\phi^{-1}(\mathcal F)\neq\emptyset$, let $L'_H$ be the
link of $\big(M_{H}\cap\phi^{-1}(\mathcal F)\big)/\Th$  in $N/\Th$,
and let $L_H$ be the link of $\big(M_{H}\cap\phi^{-1}(\mathcal
F)\big)/T$  in $N/T$.  Then $L'_H$ is connected and simply connected
whenever $L_H$ is;  and in particular, when $H \subseteq \Th$,
$L'_H = L_H$.
\end{lemma}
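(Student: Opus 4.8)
The plan is to compute both links $L_H$ and $L'_H$ explicitly from the local normal form theorem (Theorem~\ref{form}), applied at a point $p\in M_H\cap\phi^{-1}(\mathcal F)$, and then compare them. Since $T$ is abelian, every orbit is isotropic, so a neighborhood of $T\cdot p$ in $M$ is equivariantly symplectomorphic to $T\times_H(\mathfrak h^\circ\times V)$, where $H$ acts trivially on $\mathfrak h^\circ$ and linearly symplectically on $V$. Decompose $V=V^H\times W$ with $W\cong\C\times\cdots\times\C$, where $H$ acts on each $\C$ factor through a nontrivial character $S^1\to S^1$; the moment-map directions that are constrained to move $a$ off the face $\mathcal F$ correspond precisely to the $W$-directions (this is the content of how the face $\mathcal F$ sits in $\mathrm{im}(\phi)$). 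First I would identify the stratum $(M_H\cap\phi^{-1}(\mathcal F))/T$ inside $N/T$ with (a piece of) the base $T\times_H(\mathfrak h^\circ\times V^H)$ modulo the residual action, cut down by the moment-map equations, and read off the link $L_H$ as the quotient by $T$ of the unit sphere in the normal slice, which is built from $W$ and the remaining $\mathfrak h^\circ$/$V^H$ directions transverse to the face.

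The key point is then the comparison with $\Th$. Because each $\C$-factor of $W$ carries an $H$-action through a subgroup of $S^1$, and $\Th$ contains the identity component $H^0$ of $H$, passing from $T$ to $\Th$ only changes the picture by the extra quotient of $W$ (or part of it) by $H^0\subset\Th$ acting through circle characters; but $\C/(H^0\text{-action through a circle})$ is again homeomorphic to $\C$, exactly as in the proof of Lemma~\ref{Spi=}. Hence the underlying \emph{topological} model for a neighborhood of the stratum, and therefore its link, is unchanged up to homeomorphism when we replace $T$ by $\Th$: I would show there is a homeomorphism of the local models $N/T$ near the stratum and $N/\Th$ near the corresponding stratum that carries $L_H$ to $L'_H$. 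This immediately gives that $L'_H$ is connected (resp.\ simply connected) whenever $L_H$ is. The special case $H\subseteq\Th$ is even cleaner: then $H$ acts trivially on the normal data that survives after quotienting by $\Th$ in the relevant directions is already accounted for, so the two links are literally equal, $L'_H=L_H$; I would note this by observing the two constructions produce the same quotient when $H\subseteq\Th$.

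The main obstacle I expect is bookkeeping: making precise which directions of $\mathfrak h^\circ\times V^H\times W$ are ``along the face $\mathcal F$'' versus ``transverse'' (i.e.\ which combine with the moment-map constraints to form the slice whose sphere is the link), and checking that the homeomorphism $\C/(S^1\text{-subgroup})\cong\C$ is $T/\Th$-equivariant in the right way so that it descends correctly and matches up the decompositions of the two links as stratified spaces. One must also handle the two cases $N=M$ and $N=\phi^{-1}(\overline U)$ uniformly; for $N=\phi^{-1}(\overline U)$ the only change is that the moment-map directions are restricted to the closed chamber $\overline U$, which truncates the $\mathfrak h^\circ$-part of the slice to a half-space but does not affect the $W$-directions where the $H$-versus-$\Th$ discrepancy lives, so the same comparison argument goes through. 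Once the local models are identified, connectedness and simple-connectedness of $L'_H$ follow from those of $L_H$ by transporting them across the homeomorphism, with no further homotopy-theoretic input needed here.
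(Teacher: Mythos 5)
There is a genuine gap, and it sits exactly where the lemma has content. Your central claim is that replacing $T$ by $\Th$ leaves the local model, ``and therefore its link, unchanged up to homeomorphism.'' That is false in the case the lemma is really about, namely when the finite part of $H$ is not contained in $\Th$ (the hypothesis only forces $H^0\subseteq\Th$). Computing with the normal form as the paper does, the two links are $L_H=S(\R^l\times W)/H$ and $L'_H=S(\R^l\times W)/(H\cap\Th)$, so $L_H$ is a further quotient of $L'_H$ by the finite group $\Gamma=H/(H\cap\Th)$; these are in general not homeomorphic (think of $W=\C^2$ with $\Gamma=\Z_2$ acting diagonally: $S^3$ versus $\RP^3$, or lens spaces). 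The trick you import from Lemma~\ref{Spi=}, that $\C/\Z_k\cong\C$, does not repair this: that homeomorphism concerns the cone in one complex dimension, while the link is the sphere quotient, whose homeomorphism type and fundamental group do change under finite quotients in higher dimensions. Nor can you simply ``transport'' one-connectedness across the quotient map $L'_H\to L_H=L'_H/\Gamma$: simple connectivity of a finite quotient does not imply simple connectivity of the space upstairs (e.g.\ a torus modulo the hyperelliptic involution is $S^2$). What the paper actually does in this case is not a homeomorphism argument at all: it recomputes $L'_H=S(\R^l\times W)/(H\cap\Th)$ and reruns the proof of Lemma~\ref{Tlink0} --- the sphere (or $S(W)/H^0$, etc.) is connected and simply connected, and the residual finite group acts as a subgroup of the torus so every element has fixed points, whence Armstrong's theorem (Theorem~\ref{thmarm}) gives one-connectedness of $L'_H$. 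Your proposal is missing this entire mechanism; without it the case $H\nsubseteq\Th$ is unproved.

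A secondary inaccuracy: even when $H\subseteq\Th$ there is no homeomorphism between a neighborhood of the stratum in $N/T$ and one in $N/\Th$ --- the latter carries an extra $T/\Th$ factor. The correct statement is only that this factor lies along the stratum, so the transverse slice, hence the link, is the same, $L'_H=S(\R^l\times W)/H=L_H$; your conclusion in that case is right, but the justification should be the explicit computation of both quotients from the local normal form rather than an asserted homeomorphism of neighborhoods.
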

\begin{proof}
Let us prove the claims for $N=M$. Let $p\in M$ be a point with
stabilizer $H$ such that $\phi(T\cdot p)\in\mathcal F$. By
Theorem~\ref{form}, a neighborhood of the (isotropic) orbit $T\cdot
p$ in $M$ is isomorphic to $A=T\times_H(\mathfrak h^{\circ}\times
V)=T\times_H(\mathfrak h^{\circ}\times W\times V^H)$, where
$\mathfrak h=\, $Lie$(H)$, $\mathfrak h^{\circ}$ is its annihilator
in $\mathfrak t^*$, $V$ is a symplectic representation of $H$, $V^H$
is the subspace of $V$ fixed by $H$ and $W$ is such that $V=W\oplus
V^H$. The moment map $\phi$ on $A$ is $\phi ([t, a, w, v])=\phi(p)+
a + \psi(w)$, where $\psi$ is the moment map of the $H$ action on
$W$. We obtain $A_H\cap\phi^{-1}(\mathcal F)=T\times_H(\R^m\times
0\times V^H)$, where $\R^m\subset\mathfrak h^{\circ}$ is a subspace
which is mapped to $\mathcal F$. Let $\R^l$ be the subspace such
that $\mathfrak h^{\circ}=\R^l\oplus\R^m$ ($H$ acts on $\mathfrak
h^{\circ}$ by the trivial coadjoint action).

First note that $\big(A_H\cap\phi^{-1}(\mathcal F)\big)/T=\R^m\times
0\times V^H$, and that $A/T=\mathfrak h^{\circ}\times V^H\times
W/H$. So $L_H=S(\R^l\times W)/H$, where $S(\R^l\times W)$ denotes a
sphere of the vector space $\R^l\times W$.

If $H\subseteq \Th$,  then $\big(A_H\cap\phi^{-1}(\mathcal
F)\big)/\Th=T/\Th\times\R^m\times 0\times V^H$, and
$A/\Th=T/\Th\times\mathfrak h^{\circ}\times V^H\times W/H$. So
$L'_H=S(\R^l\times W)/H$. So in this case, $L'_H$ is connected and
simply connected when $L_H$ is.

If $H\subseteq \Th$ except for a finite subgroup
 $\Gamma$, then
$\big(A_H\cap\phi^{-1}(\mathcal F)\big)/\Th
=T/\Th\times_H\left(\R^m\times 0\times
V^H\right)=T/\Th\times_{\Gamma}\big(\R^m\times V^H\big)$, and $A/\Th
= T/\Th\times_H \big(\mathfrak h^{\circ}\times V^H\times
W\big)=(T/\Th\times\R^m\times V^H)\times_{\Gamma}\big((\R^l\times
W)/(H\cap \Th)\big)$. So $L'_H=S(\R^l\times W)/(H\cap \Th)$. Note
that $H\cap \Th$ leaves out finitely many elements of $H$.  If $L_H$
is connected and simply connected, by the same criteria of the proof
(see our outline of proof of Lemma~\ref{Tlink0}), $L'_H$ is
connected and simply connected.

We can similarly prove the claims for $N =\phi^{-1}(\overline U)$. See
 the proof of Lemma 3.8 in
\cite{L2} for the computation of $L_H$ in this case and argue
similarly as above.
\end{proof}

For the link $L_H$  in  Lemma~\ref{Tlinksame},  in  \cite{L2}, we proved the following
facts  (see the  proof of Lemmas 7.20 and 3.8 in \cite{L2}).
 Here, for clarity, we outline the ideas of their proof.

\begin{lemma}\label{Tlink0}
Let   $L_H$ be as in Lemma~\ref{Tlinksame}.
\begin{itemize}
\item [(1)]  When $N=M$,  if
 $M_{H}$ is not principal, then $L_H$  is connected and simply connected.
\item [(2)] When $N= \phi^{-1}(\overline U)$,  $L_H$  is  always connected and simply connected.
\end{itemize}
\end{lemma}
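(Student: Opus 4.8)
The plan is to feed the explicit shape of $L_H$ obtained in the proof of Lemma~\ref{Tlinksame} into a general analysis of quotients of a linear sphere by the compact \emph{abelian} group $H\subseteq T$. Recall that for $N=M$ one has $L_H=S(\R^l\times W)/H$, where $W$ is the $H$-fixed-point-free summand of the symplectic slice representation at a point $p$ over $\mathcal F$ and the ``Euclidean factor'' $\R^l\subseteq\mathfrak h^\circ$ carries the trivial $H$-action; for $N=\phi^{-1}(\overline U)$ the analogous computation (see the proof of Lemma~3.8 in \cite{L2}) yields a description of the same type, now also incorporating the directions of $\overline U$ transverse to $\mathcal F$, and since $\mathcal F$ is a proper face of $\overline U$ these contribute a cone factor, so $L_H$ becomes the join of an $S(\R^l\times W)/H$-type space with a simplex. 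Throughout I would exploit three features of these models: $H$ acts trivially on the Euclidean factor (a subspace of $\mathfrak t^*$), $H$ has no nonzero fixed vector in $W$, and $W$ splits as a sum $\bigoplus_j\C_{\chi_j}$ of nontrivial complex characters of $H$.

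First I would dispose of connectedness and of degenerate dimensions by an effectiveness argument. Assuming the $T$-action effective, if $H^0\neq 1$ then $H^0$ cannot act trivially on $W$: otherwise it would act trivially on the whole local normal form near $T\cdot p$, hence on an open subset of $M$, hence — the fixed set being closed and open in the connected manifold $M$ — on all of $M$, contradicting effectiveness. For $N=M$ this is where the hypothesis enters: since $\mathcal F$ is a singular face and $M_H$ is non-principal, the stabilizers over $\mathcal F$ are positive-dimensional, so $H^0\neq 1$, hence $W\neq 0$ and $\dim_{\R}(\R^l\times W)\geq 2$, so $S(\R^l\times W)$ is connected. For $N=\phi^{-1}(\overline U)$ the join-with-a-simplex already makes $L_H$ connected (indeed contractible) with no hypothesis on $M_H$. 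Next I would pass to the identity component: writing $U_0$ for the relevant vector space, $S(U_0)/H^0$ is simply connected — if $\dim_{\R}U_0\geq 3$ because then $S(U_0)$ is simply connected and the quotient map by the connected group $H^0$ induces a surjection on $\pi_1$; and the only alternative, $\dim_{\R}U_0=2$, forces $U_0\cong\C$ with $H^0$ acting through a nontrivial, hence surjective, homomorphism to $U(1)$, so that $S(U_0)/H^0$ is a point.

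The substance is the last step: deduce that $L_H=(S(U_0)/H^0)/(H/H^0)$ is simply connected via Armstrong's theorem (Theorem~\ref{thmarm}). Since $S(U_0)/H^0$ is a compact, path-connected, simply connected metric space and $H/H^0$ is finite, it suffices to show that every $\bar h\in H/H^0$ has a fixed point in $S(U_0)/H^0$; then $\pi_1(L_H)=(H/H^0)/(H/H^0)=1$. When the Euclidean factor of $U_0$ is nonzero — always the case for $N=\phi^{-1}(\overline U)$, and the case $\R^l\neq 0$ for $N=M$ — any unit vector in that factor maps to a point of $S(U_0)/H^0$ fixed by every $\bar h$, and we are done. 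The remaining case, $\R^l=0$ (so $U_0=W$), is the delicate one: here $\bar h$ has a fixed point in $S(W)/H^0$ precisely when its coset $hH^0$ in $H$ contains an element acting with eigenvalue $1$ on $W$, and I would produce such an element by choosing a character $\chi_{j_0}$ whose restriction to $H^0$ is nontrivial — one exists because $H^0$ acts nontrivially on $W$, by the effectiveness step — and then picking $g\in H^0$ with $\chi_{j_0}(hg)=1$, which is possible because $\chi_{j_0}(H^0)$, a nontrivial connected subgroup of $U(1)$, is all of $U(1)$; the element $hg$ then has a nonzero eigenvector in $\C_{\chi_{j_0}}$. I expect this coset/eigenvalue argument — and, for part~(2), correctly pinning down the cone factor produced by the directions of $\overline U$ transverse to $\mathcal F$ — to be where the real care is needed; the remainder reduces to bookkeeping of dimensions and of the weight decomposition of $W$. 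This also accounts for the asymmetry of the two statements: for $N=\phi^{-1}(\overline U)$ the cone factor forces simple connectedness outright, whereas for $N=M$ one needs $H^0\neq 1$ — which is exactly what the singularity of $\mathcal F$ provides — in order to handle the case $\R^l=0$.
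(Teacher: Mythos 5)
Your part (1) is essentially the paper's own argument: split off the identity component, note $S(\R^l\times W)$ (or $S(W)$) is connected and simply connected except in the $\dim_\R W=2$ case where the quotient is a point, and then apply Armstrong's theorem (Theorem~\ref{thmarm}) to the finite group $H/H^0$; your coset/eigenvalue argument for producing fixed points in $S(W)/H^0$ when $\R^l=0$ is a correct filling-in of the step the paper states tersely (``each element of $\Gamma$ has a fixed point in $S(W)/H^0$''), and the effectiveness argument forcing $H^0$ to act nontrivially on $W$ is the right justification (your intermediate claim that non-principality of $M_H$ already forces $H^0\neq 1$ is not right --- $H$ can be a nontrivial finite group over a singular face --- but in that situation $\R^l\neq 0$, so your case analysis still covers it).

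Part (2), however, has a genuine gap: you have mis-modelled the link. The correct local computation (the one the paper quotes from Lemma 3.8 of \cite{L2}) gives $L_H=S\bigl((\R^+)^l\times W\cap\psi^{-1}(\overline U)\bigr)/H$, i.e.\ the sphere of the orthant times the cone $\{w\in W:\psi(w)\in\overline U\}$, modulo $H$; it is \emph{not} a join of an ``$S(\R^l\times W)/H$-type space'' with a simplex, and a simplex factor is \emph{not} always present. When $\R^l=0$ --- precisely when $H$ is the top-dimensional stabilizer over $\mathcal F$, e.g.\ over a vertex of the moment image --- the directions of $\overline U$ transverse to $\mathcal F$ are realized by $\psi(W)$, where $H$ acts nontrivially, not by a trivially-acted-on Euclidean factor; so your claims that ``the Euclidean factor is always nonzero for $N=\phi^{-1}(\overline U)$'' and that $L_H$ is contractible ``with no hypothesis'' both fail, and there is no free $H$-fixed point with which to run Armstrong's theorem. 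Concretely, for $S^1$ acting on $\C^2$ with weights $(1,-1)$, $\mathcal F=\{0\}$, $\overline U=[0,\infty)$, at the origin one has $\R^l=0$, $W=\C^2$, and $L_H=S\bigl(\{|z_1|\ge|z_2|\}\bigr)/S^1$, whose one-connectedness (it is a disk) requires analyzing the quotient, not a join decomposition. This missing case is exactly the substantive content of Lemmas 3.9 and 3.10 of \cite{L2}, which the paper invokes to show $S\bigl((\R^+)^l\times W\cap\psi^{-1}(\overline U)\bigr)/H^0$ is connected and simply connected before running the finite-group argument; your proposal neither proves this nor cites a substitute, and even in the case $l\ge 1$ the correct join is $\Delta^{l-1}*\bigl(S(W\cap\psi^{-1}(\overline U))/H\bigr)$, so the chamber condition on $W$ cannot simply be dropped as your ``same type'' description suggests.
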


\begin{proof} [Outline of proof] By the computations in \cite{L2} (also see the proof of
Lemma~\ref{Tlinksame} for the case when $N=M$),
$$L_H = S\big(\R^l\times W\big)/H \quad\mbox{when $N=M$};$$
$$L_H = S\big((\R^+)^l\times W\cap
\psi^{-1}(\overline{U})\big)/H \quad\mbox{when $N= \phi^{-1}(\overline U)$}.$$
Here,  $\R^l\subset \mathfrak t^*$ is a linear
subspace on which $H$ acts by the trivial coadjoint action,
$(\R^+)^l$ is the product of the corresponding nonnegative real
lines, and $\R^l = 0$ if and only if $H$ is the highest dimensional
stabilizer group of the points in $\phi^{-1}(\mathcal F)$; and $W$
is a symplectic representation of $H$ with moment map $\psi$,  $W =
0$ if and only if $H$ is the principal stabilizer.

(1) Consider $L_H$ for the case $N=M$. Assume $H$ is not the principal stabilizer, then
$W\neq 0$. Consider the following two cases. Case (a), when
$\R^l\neq 0$, $S\left(\R^l\times W\right)$ is connected and simply
connected. Since $H$ acts on $\R^l$ trivially, by
Theorem~\ref{thmarm}, $L_H$ is connected and simply connected. Case
(b), when $\R^l = 0$, then $H$ is the highest dimensional stabilizer
of the points in $\phi^{-1}(\mathcal F)$. Since $\mathcal F$ is a
singular face, $\dim(H) > 0$. If $H^0$ is the identity component of
$H$, $H^0$ has to act on $W$ nontrivially. If $\dim(W) =2$,
$S(W)/H^0$ is a single point; otherwise,
 $S(W)$ is connected and simply connected and hence so is $S(W)/H^0$.
Let $H=H^0\times\Gamma$. Then $\Gamma$ acts on $S(W)/H^0$. Since
$\Gamma$ acts on $W$ as a finite subgroup of $T$, each element in
$\Gamma$ has a fixed point in $S(W)/H^0$, so
$\left(S(W)/H^0\right)/\Gamma= S(W)/H = L_H$ is connected and simply
connected by Theorem~\ref{thmarm}.

(2) Consider $L_H$ for the case $N= \phi^{-1}(\overline U)$.  If $H$ is the principal stabilizer, then
$W=0$, and $S\big((\R^+)^l\big)$ is connected and simply connected,
and hence so is $L_H$. Now, we assume $W\neq 0$. Let $H^0$ be the
identity component of $H$. Lemmas 3.9 and 3.10 in \cite{L2} show
that  $S\big((\R^+)^l\times W\cap \psi^{-1}(\overline{U})\big)/H^0$
is connected and simply connected. Similar argument as in (1) for
the finite group $H/H^0$ action on $S\big((\R^+)^l\times
W\cap\psi^{-1}(\overline{U})\big)/H^0$ shows that $L_H$ is
connected and simply connected.
\end{proof}

Next, we aim to prove Lemma~\ref{Tisom'}. We first do the necessary
removing in Lemma~\ref{T-remove}.

\begin{lemma}\label{T-remove}
Let $(M, \omega)$ be a connected Hamiltonian $T$-manifold with
proper moment map $\phi$. Let $\Th\subset T$ be a connected subgroup
which contains the identity component of each stabilizer group.
 Let $c$ be a singular value, and let $a$ be a regular value very near $c$. Let $O$ be a small open neighborhood of $c$ containing $a$.
Let $O'$ be the intersection of $O$ with the connected open chamber containing $a$, and let
$\overline{O'}$ be its closure in $O$. Let $B$ be the set of values
in $\overline{O'} - O'$.
 Then
$$\pi_1\big(\phi^{-1}(\overline{O'})/\Th\big)\cong
\pi_1\big(\phi^{-1}(\overline{O'})/\Th-\phi^{-1}(B)/\Th\big).$$
   \end{lemma}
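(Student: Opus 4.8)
The goal is to remove, from the stratified space $\phi^{-1}(\overline{O'})/\Th$, the part sitting over the boundary values $B$, namely $\phi^{-1}(B)/\Th$, without changing $\pi_1$. The strategy is to iterate Lemma~\ref{remove:str}: list the strata of $\phi^{-1}(B)/\Th$ in an order compatible with the partial order on strata (deepest first, i.e. smallest in the closure order, so that at each stage the stratum we remove is closed in what remains), and at each stage check that the link of that stratum is connected and simply connected. The set $B$ lies in the closed faces of $\mathrm{im}(\phi)$ other than the open chamber containing $a$; near the singular value $c$, these are precisely the singular faces $\mathcal F$ with $c\in\overline{\mathcal F}$ and $\mathcal F\subset\overline{O'}$. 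For each orbit type $M_H$ meeting $\phi^{-1}(\mathcal F)$, the relevant stratum is $\big(M_H\cap\phi^{-1}(\mathcal F)\big)/\Th$, and its link in $\phi^{-1}(\overline{O'})/\Th$ is the link $L'_H$ of Lemma~\ref{Tlinksame} with $N=\phi^{-1}(\overline U)$ (where $U$ is the chamber containing $a$ and $\overline U\supset\mathcal F$).

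First I would set up the stratification: $\phi^{-1}(\overline{O'})/\Th$ decomposes according to orbit type $M_H$ intersected with the face structure of $\overline{O'}$, and $\phi^{-1}(B)/\Th$ is the union of those strata lying over $B$. I would order these strata by the closure partial order (a stratum $S$ comes before $S'$ if $S\subset\overline{S'}$), which is possible since the decomposition is locally finite, and since $O$ is a small neighborhood only finitely many strata are involved. Then I would induct: having removed some initial segment, the next stratum $S=\big(M_H\cap\phi^{-1}(\mathcal F)\big)/\Th$ to be removed is a closed stratum of the remaining stratified space $Y$ (closedness is exactly what the ordering guarantees — all strata in its closure have already been removed), its link in $Y$ agrees with its link $L'_H$ in the ambient space (links are local and the removed strata were not in the closure of $S$), and by Lemma~\ref{remove:str}, $\pi_1(Y)\cong\pi_1(Y-S)$. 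Concatenating these isomorphisms across all strata over $B$ gives $\pi_1\big(\phi^{-1}(\overline{O'})/\Th\big)\cong\pi_1\big(\phi^{-1}(\overline{O'})/\Th-\phi^{-1}(B)/\Th\big)$.

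The content of the argument is verifying the hypothesis of Lemma~\ref{remove:str} at each step: that $L'_H$ is connected and simply connected. This is where Lemmas~\ref{Tlinksame} and \ref{Tlink0} do the work. By Lemma~\ref{Tlink0}(2), for $N=\phi^{-1}(\overline U)$ the link $L_H$ (the $T$-quotient version) is \emph{always} connected and simply connected — the crucial point being that the relevant sphere is of the form $S\big((\R^+)^l\times W\cap\psi^{-1}(\overline U)\big)/H$, and the presence of the half-space constraint $\psi^{-1}(\overline U)$ together with the nonnegative rays makes the space contractible-looking enough after quotienting (via Theorem~\ref{thmarm} and Lemmas 3.9, 3.10 of \cite{L2}), regardless of whether $M_H$ is principal. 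Then Lemma~\ref{Tlinksame} upgrades this to $\Th$: it asserts $L'_H$ is connected and simply connected whenever $L_H$ is (and equals $L_H$ when $H\subseteq\Th$, the case $H\subseteq\Th$ up to a finite subgroup being handled by the same $\pi_0$/fixed-point criterion via Theorem~\ref{thmarm}).

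The main obstacle is a bookkeeping one rather than a conceptual one: ensuring that one can genuinely peel off the strata over $B$ one at a time while staying inside the class of stratified spaces, i.e. that after each removal the remaining space is still a stratified space with the induced decomposition and that the next stratum is closed in it. This is handled by choosing the ordering to refine the closure partial order and by the local finiteness of the decomposition (which holds since $O$ is a small neighborhood of a single value $c$). Once the ordering is fixed, each individual step is a direct application of Lemma~\ref{remove:str} with the link computed by Lemmas~\ref{Tlinksame} and \ref{Tlink0}, and the conclusion follows by transitivity of the resulting isomorphisms.
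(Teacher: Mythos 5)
Your argument is correct and is essentially the paper's proof: the paper likewise removes the strata of $\phi^{-1}(B)/\Th$ from $\phi^{-1}(\overline{O'})/\Th$ inductively, ordered so that each stratum is closed at its turn, applying Lemma~\ref{remove:str} with the links verified connected and simply connected by Lemmas~\ref{Tlinksame} and \ref{Tlink0}(2), exactly as in Lemma 3.8 of \cite{L2}. One parenthetical should be fixed: the strata removed before a given stratum $S$ \emph{do} lie in $\overline{S}-S$; the reason the link of $S$ is unchanged is rather that $S$ is disjoint from \emph{their} closures (by the frontier condition), so a small neighborhood of any point of $S$ misses them.
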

 \begin{proof}
 We inductively use Lemmas~\ref{Tlinksame} and \ref{Tlink0}  to remove $\phi^{-1}(B)/\Th$ from $\phi^{-1}(\overline{O'})/\Th$.
 One may refer to the proof of Lemma 3.8 in \cite{L2}.
  \end{proof}

\begin{lemma}\label{Tisom'}
Let $(M, \omega)$ be a connected Hamiltonian $T$-manifold with
proper moment map $\phi$. Let $\Th\subset T$ be a connected subgroup
which contains the identity component of each stabilizer group.  Let
$c$ be a singular value, and let $a$ be a regular value very near
$c$. Then
$\pi_1\big(\phi^{-1}(c)/\Th\big)\cong\pi_1\big(\phi^{-1}(a)/\Th\big)$.
\end{lemma}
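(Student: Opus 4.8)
The plan is to interpolate between $\pi_1\big(\phi^{-1}(c)/\Th\big)$ and $\pi_1\big(\phi^{-1}(a)/\Th\big)$ through the two intermediate spaces $\phi^{-1}(\overline{O'})/\Th$ and $\phi^{-1}(O')/\Th$, where $O\ni a$ is a small ball around $c$ and $O'$, $\overline{O'}$, $B$ are as in Lemma~\ref{T-remove}; since $a$ is very near $c$ we may assume $a$ lies in an open chamber whose closure contains $c$, so that $c\in\overline{O'}$. Lemma~\ref{T-remove} already supplies the middle isomorphism
\[
\pi_1\big(\phi^{-1}(\overline{O'})/\Th\big)\cong\pi_1\big(\phi^{-1}(\overline{O'})/\Th-\phi^{-1}(B)/\Th\big)=\pi_1\big(\phi^{-1}(O')/\Th\big),
\]
where the equality uses that $T$ is abelian (so $\phi$ is $\Th$-invariant) together with $\overline{O'}-B=O'$. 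It remains to produce the two outer isomorphisms $\pi_1\big(\phi^{-1}(\overline{O'})/\Th\big)\cong\pi_1\big(\phi^{-1}(c)/\Th\big)$ and $\pi_1\big(\phi^{-1}(O')/\Th\big)\cong\pi_1\big(\phi^{-1}(a)/\Th\big)$.

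For the second outer isomorphism I would note that $O'$ is connected and consists of regular values in a single chamber, so $\phi$ restricts to a proper submersion $\phi^{-1}(O')\to O'$; by Ehresmann's theorem this is a $T$-equivariant locally trivial fibration, and since $O'$ is convex, hence contractible, it is $T$-equivariantly trivial, $\phi^{-1}(O')\cong_T O'\times\phi^{-1}(a)$. Hence $\phi^{-1}(O')$ $T$-equivariantly deformation retracts onto the fiber $\phi^{-1}(a)$, and — since $\Th\subset T$ and the retraction is $T$-equivariant — this descends to a deformation retraction of $\phi^{-1}(O')/\Th$ onto $\phi^{-1}(a)/\Th$, giving the isomorphism on $\pi_1$.

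For the first outer isomorphism I would invoke Theorem~\ref{retract}: translating $\phi$ by $-c$, it furnishes a $T$-invariant open neighborhood of $\phi^{-1}(c)$, which we may take to be $\phi^{-1}(O)$ for $O$ small, that $T$-equivariantly deformation retracts onto $\phi^{-1}(c)$. The key point is that this retraction $R_s$ can be chosen ``radial toward $c$'' in the sense that $\phi\big(R_s(x)\big)=c+(1-s)\big(\phi(x)-c\big)$; this is transparent from the local normal form (Theorem~\ref{form}), since $T$ is abelian so the coadjoint twist $\operatorname{Ad}^*(g)$ is trivial and the fiberwise scaling $(\nu,v)\mapsto\big((1-s)\nu,\sqrt{1-s}\,v\big)$ scales $\phi-c$ linearly, and these local models glue over $\phi^{-1}(c)$ (cf. the proof of Lemma~3.8 in \cite{L2}). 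Shrinking $O$ so that $\phi(M)\cap O$ is the polyhedral cone with vertex $c$, the set $\overline{O'}$ is a closed convex subcone with vertex $c$, hence is preserved by $R_s$; therefore $R_s$ restricts to a $T$-equivariant deformation retraction of $\phi^{-1}(\overline{O'})$ onto $\phi^{-1}(c)$, which descends through $\Th$ to yield $\pi_1\big(\phi^{-1}(\overline{O'})/\Th\big)\cong\pi_1\big(\phi^{-1}(c)/\Th\big)$. Composing the three isomorphisms proves the lemma.

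The main obstacle is precisely this last step: one must know that the local deformation retraction of Theorem~\ref{retract} respects the radial/convex geometry of the moment image near $c$, so that it restricts to $\phi^{-1}(\overline{O'})$. Granting the ``radial'' form of the retraction (which for abelian $T$ follows from the Marle--Guillemin--Sternberg model, as used in \cite{L2}), the remaining ingredients --- Ehresmann's theorem over the regular chamber, contractibility of $O'$, and Lemma~\ref{T-remove} --- are routine.
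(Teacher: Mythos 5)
Your proposal follows essentially the same route as the paper's proof: Lemma~\ref{T-remove} supplies the removal step, a $T$-equivariant deformation retraction takes $\phi^{-1}(\overline{O'})$ onto $\phi^{-1}(c)$, and the complement $\phi^{-1}(\overline{O'})/\Th-\phi^{-1}(B)/\Th=\phi^{-1}(O')/\Th$ is identified with $\phi^{-1}(a)/\Th$ up to homotopy via the fibration over the contractible chamber piece. The only difference is one of explicitness: the paper attributes both retractions directly to Theorem~\ref{retract} (with the chamber step implicit in the equivariant identification of regular fibers, as in Lemma~\ref{Tisom}), whereas you spell out the Ehresmann argument and the ``radial'' compatibility of the local retraction with the cone $\overline{O'}$, which is exactly the point the paper leaves tacit.
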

\begin{proof}
 We take the $O$ in Lemma~\ref{T-remove} small enough so that $\phi^{-1}(O)$ and $\phi^{-1}(\overline{O'})$
 equivariantly
 deformation retracts to $\phi^{-1}(c)$ (Theorem~\ref{retract}). So
 $$\pi_1\big(\phi^{-1}(\overline{O'})/\Th\big)\cong\pi_1\big(\phi^{-1}(c)/\Th\big).$$
Since
$$\pi_1\big(\phi^{-1}(\overline{O'})/\Th-\phi^{-1}(B)/\Th\big)\cong\pi_1\big(\phi^{-1}(a)/\Th\big),$$
the claim follows from Lemma~\ref{T-remove}.
\end{proof}

 We need the following lemma when we do removing and deforming on the space $M/G$ to prove
 $\pi_1\left(M/G\right)\cong\pi_1(M_a)$ for a particular value $a$.

\begin{lemma}\label{TT_m}
Let $(M, \omega)$ be a connected Hamiltonian $T$-manifold with
proper moment map $\phi$. Let $\Th\subset T$ be a connected subgroup
which contains the identity component of each stabilizer group. Let
$\mathcal{F}$ be a singular face on im$(\phi)$. Let $O$ be a small
open neighborhood of $\mathcal{F}$ on im$(\phi)$ ($O$ does not
intersect the faces which are in the closure of $\mathcal{F}$). Let
$S$ be the set of nonprincipal orbits in $\phi^{-1}(\mathcal{F})$.
Then
$\pi_1\big(\phi^{-1}(O)/\Th\big)\cong\pi_1\big(\phi^{-1}(O)/\Th-S/\Th\big)$.
\end{lemma}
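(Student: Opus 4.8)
The plan is to realize $S/\Th$ as a locally finite union of strata of the stratified space $\phi^{-1}(O)/\Th$ and to delete these strata one at a time by Lemma~\ref{remove:str}, just as in the proof of Lemma~\ref{T-remove} and of Lemma~3.8 in \cite{L2}; the one-connectedness of the links that must be checked will come from Lemmas~\ref{Tlinksame} and \ref{Tlink0}. We may work on one connected component of $\phi^{-1}(O)/\Th$ at a time. Since $T$ is abelian, $\phi$ is $T$-invariant, and since $O$ meets no face lying in the closure of $\mathcal F$, the face $\mathcal F$ is relatively closed in $O$; hence $\phi^{-1}(\mathcal F)$ is closed in $\phi^{-1}(O)$, and, as the principal orbit type $M_P$ is open, $S=\phi^{-1}(\mathcal F)\setminus M_P$ is closed in $\phi^{-1}(O)$, so $S/\Th$ is closed in $\phi^{-1}(O)/\Th$. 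The non-principal orbit types $M_H$ with $M_H\cap\phi^{-1}(\mathcal F)\neq\emptyset$ form a locally finite family, $S=\bigsqcup_H\big(M_H\cap\phi^{-1}(\mathcal F)\big)$, and each $\big(M_H\cap\phi^{-1}(\mathcal F)\big)/\Th$ is a union of connected strata of $\phi^{-1}(O)/\Th$.

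Fix a linear order refining the partial order on these non-principal orbit types by singularity, with the largest stabilizer first, and delete the corresponding strata in that order, one connected component at a time (removing a closed stratum from a stratified space again gives a stratified space). At the step at which $\big(M_H\cap\phi^{-1}(\mathcal F)\big)/\Th$ is to be deleted, I would check two things. First, it is closed in the space reached so far: its closure in $\phi^{-1}(O)$ lies in $\bigcup_{(H')\supseteq(H)}M_{H'}\cap\phi^{-1}(\mathcal F)$, and every orbit type with $(H')\supsetneq(H)$ is non-principal, hence already removed. Second, its link in the current space is connected and simply connected: in the local normal form $A=T\times_H(\mathfrak h^{\circ}\times V)$ of Theorem~\ref{form} near a point $p\in M_H$ with $\phi(p)\in\mathcal F$ only orbit types $(H')\subseteq(H)$ occur, so no stratum strictly more singular than $(H)$ meets a neighborhood of $M_H\cap\phi^{-1}(\mathcal F)$ and the earlier deletions have not changed this link; moreover, since $\phi$ is $T$-invariant and $O$ is a neighborhood of $\mathcal F$ in $\mbox{im}(\phi)$ meeting every face adjacent to $\mathcal F$ and no face below it, a small neighborhood of $T\cdot p$ in $\phi^{-1}(O)$ agrees with one in $M$, so this link is exactly the $L'_H$ of Lemma~\ref{Tlinksame} for $N=M$. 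By Lemma~\ref{Tlinksame}, $L'_H$ is connected and simply connected whenever $L_H$ is, and by Lemma~\ref{Tlink0}(1), $L_H$ is connected and simply connected because $M_H$ is non-principal. Thus Lemma~\ref{remove:str} applies at each step, and the resulting isomorphisms compose to give $\pi_1\big(\phi^{-1}(O)/\Th\big)\cong\pi_1\big(\phi^{-1}(O)/\Th-S/\Th\big)$.

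The genuine content here --- the description of the links $L'_H$ and the proof that they are connected and simply connected --- is already isolated in Lemmas~\ref{Tlinksame} and \ref{Tlink0}, so the main obstacle in this lemma is organizational: arranging the deletions so that at each stage the stratum being removed is closed and its link has not been disturbed by the previous deletions. This rests on the local normal form observation that only orbit types no more singular than $(H)$ occur near $M_H$. The remaining point requiring care is the identification of the link ``in $\phi^{-1}(O)/\Th$'' with the link ``in $M/\Th$'' appearing in Lemma~\ref{Tlinksame}, which uses the $T$-invariance of $\phi$ together with the hypothesis that $O$ is a neighborhood of $\mathcal F$ meeting every adjacent face and no face below it.
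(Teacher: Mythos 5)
Your proposal is correct and follows essentially the same route as the paper, whose proof of Lemma~\ref{TT_m} simply invokes the inductive stratum-removal argument of Lemma 7.20 in \cite{L2} together with Lemmas~\ref{Tlinksame} and \ref{Tlink0}~(1); your write-up fleshes out exactly that argument (removal in order of decreasing isotropy, closedness of each stratum at its removal stage, identification of the link with the $N=M$ link $L'_H$ via the local normal form and the openness of $\phi^{-1}(O)$ in $M$, then Lemma~\ref{remove:str}). No gaps worth flagging.
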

\begin{proof}
The argument is similar to the proof of Lemma 7.20 in \cite{L2} by
using Lemmas~\ref{Tlinksame} and  \ref{Tlink0} (1).
\end{proof}

\begin{proof}[Proof  of Theorem~\ref{hatG} for $G=T$.]
Using Lemmas~\ref{Tisom}, \ref{Tisom'} and \ref{TT_m}, follow the
argument used in the proof of Theorem 1.6 in \cite{L2} for the case
$G=T$.
\end{proof}

 \subsection{Proof of  Theorem~\ref{hatG} for nonabelian
 $G$}\label{section:hatG}
 \
 \medskip

In this subsection, we prove Theorem~\ref{hatG} for nonabelian group
actions. In this case, the proof is more technical than the case
when an abelian group acts. We need to work in the symplectic cross
sections and then work in $M$. Similar to Lemma~\ref{Tlinksame},
Lemma~\ref{Glinksame} is the key observation which makes the
removing process work.

\begin{lemma}\label{Glinksame}
Let $(M, \omega)$ be a connected Hamiltonian $G$-manifold with
proper moment map $\phi$, where $G$ is nonabelian.   Let $\Gh\subset
G$ be a connected subgroup which contains the identity component of
each stabilizer group. Let $C$ be the central face of $\mathfrak
t^*_+$, and assume that $C\cap\mbox{im}(\phi)\neq \emptyset$ and
that $C$ is not the only face of $\mathfrak t^*_+$ which intersects
$\mbox{im}(\phi)$.
 For each orbit type $M_{(H)}$ such that $M_{(H)}\cap\phi^{-1}(C)\neq\emptyset$,
 let $L'_H$ be the
link of  $\left(M_{(H)}\cap\phi^{-1}(C)\right)/\Gh$ in $M/\Gh$,  and  $L_H$ be the
link of $\left(M_{(H)}\cap\phi^{-1}(C)\right)/G$ in $M/G$.
 Then $L'_H$ is connected and simply connected whenever $L_H$ is; and in
particular, $L'_H = L_H$ when $H\subseteq \Gh$.
\end{lemma}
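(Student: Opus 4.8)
The plan is to run the proof of Lemma~\ref{Tlinksame} with $G$ in place of $T$ and $\Gh$ in place of $\Th$; the feature of the central face that makes this possible is that the orbits lying over $C$ are isotropic. Indeed, since $C$ is the central face of $\mathfrak t^*_+$ it is contained in $(\mathfrak g^*)^G$, so if $p\in M$ satisfies $\phi(G\cdot p)\cap C\neq\emptyset$ then equivariance of $\phi$ forces $\phi(p)\in C$, and because $\phi(p)$ annihilates $[\mathfrak g,\mathfrak g]$ the restriction of $\omega$ to $G\cdot p$ vanishes (its value at $p$ is $(\xi,\eta)\mapsto\langle\phi(p),[\xi,\eta]\rangle$), so $G\cdot p$ is isotropic. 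Hence the local normal form theorem (Theorem~\ref{form}) applies to $G\cdot p$ \emph{directly}, with no passage to a cross section: a $G$-invariant neighborhood $A$ of $G\cdot p$ is equivariantly symplectomorphic to $G\times_H(\mathfrak h^{\circ}\times V)$ with $\phi([g,a,v])=Ad^*(g)\bigl(\phi(p)+a+\psi(v)\bigr)$, where $H=G_p$ and $\psi$ is the moment map of the linear $H$-action on $V$. (The non-central faces really do require the cross-section reduction; that is handled in the other parts of the proof of Theorem~\ref{hatG}.)

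Next I would compute the two links from this model as in Lemma~\ref{Tlinksame}, carrying out the computation of $L'_H$ in a slice for the $\Gh$-action at a point $p$ with $G_p=H$ and $\phi(p)\in C$ --- rather than via a global product decomposition of $A/\Gh$, which is available in the torus case only because there $\Th$ is normal and $H\subseteq\Th$ acts trivially on $T/\Th$. Write $V=W\oplus V^H$ and choose an $H$-invariant splitting $\mathfrak h^{\circ}=\mathbb R^m\oplus\mathbb R^l$, where $\mathbb R^m\subseteq\mathfrak h^{\circ}\cap(\mathfrak g^*)^G$ consists of the directions along which $\phi$ stays in $C$. Then the stratum $M_{(H)}\cap\phi^{-1}(C)$ is modeled near $G\cdot p$ by $G\times_H(\mathbb R^m\times 0\times V^H)$, and its normal data inside $A$ is the $H$-representation $\mathbb R^l\times W$; this gives $L_H=S(\mathbb R^l\times W)/H$, in agreement with the computation of $L_H$ in \cite{L2}, and, reading the same picture through the $\Gh$-slice, $L'_H=S(\mathbb R^l\times W)/(H\cap\Gh)$. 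Since $\Gh$ is connected it contains $H^0$, so $H\cap\Gh$ has finite index in $H$; and if $H\subseteq\Gh$ then $H\cap\Gh=H$ and $L'_H=L_H$ exactly.

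It then remains to show that $L'_H=S/(H\cap\Gh)$ is connected and simply connected whenever $L_H=S/H$ is, where $S=S(\mathbb R^l\times W)$ and $H^0\subseteq H\cap\Gh\subseteq H$ with the last inclusion of finite index. Here I would argue as in the outline of proof of Lemma~\ref{Tlink0}. First one checks that $S/H^0$ is connected and simply connected: this is immediate from Theorem~\ref{thmarm} when $\dim(\mathbb R^l\times W)\geq 3$, since then $S$ itself is connected and simply connected, and the low-dimensional cases (for instance $\dim W=2$ with $H^0$ acting nontrivially on $W$, where $S(W)/H^0$ is a point) are disposed of separately, the hypothesis that $L_H$ is connected and simply connected together with the assumption that $C$ is not the only face meeting $\mbox{im}(\phi)$ being exactly what rules out the degenerate possibilities. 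Then every element of the finite group $(H\cap\Gh)/H^0$ has a fixed point in $S/H^0$ --- this uses that $\mathbb R^l\times W$ is the normal representation of a stratum over the central face, so such elements act through a finite subgroup normalizing $H^0$ and one can exhibit a fixed direction, exactly as for $H/H^0$ in Lemma~\ref{Tlink0} --- so Theorem~\ref{thmarm} applied to the $(H\cap\Gh)/H^0$-action on $S/H^0$ shows that $L'_H=(S/H^0)/((H\cap\Gh)/H^0)$ is connected and simply connected.

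The step I expect to be the main obstacle is the link computation. Unlike in the torus case, $H$ acts on $\mathfrak h^{\circ}$ by a generally nontrivial coadjoint representation, so one must carefully separate the directions $\mathbb R^m$ keeping $\phi$ inside $C$ from the rest, identify the normal representation $\mathbb R^l\times W$ of the stratum $\bigl(M_{(H)}\cap\phi^{-1}(C)\bigr)/G$ correctly, and --- in the case where $H\cap\Gh$ is a proper subgroup of $H$ --- verify that each element of $(H\cap\Gh)/H^0$ fixes a point of $S(\mathbb R^l\times W)/H^0$, which is precisely what makes Theorem~\ref{thmarm} applicable and hence what makes the removing step in the proof of Theorem~\ref{hatG} work for nonabelian $G$.
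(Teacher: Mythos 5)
Your proposal is correct and follows essentially the same route as the paper's proof: apply the local normal form theorem directly to the (isotropic) orbits over the central face $C$, identify $L_H=S(\R^l\times W)/H$ and $L'_H=S(\R^l\times W)/(H\cap\Gh)$ with $L'_H=L_H$ when $H\subseteq\Gh$, and transfer one-connectedness via Armstrong's theorem applied to the finite group $(H\cap\Gh)/H^0$ acting with fixed points on $S(\R^l\times W)/H^0$, exactly as in the paper's appeal to the criteria in Lemma~\ref{Glink0}. Your slice-based computation of $L'_H$ and your explicit check that orbits over $C$ are isotropic are only cosmetic variations on the paper's product-decomposition computation.
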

 \begin{proof}
By the local normal form theorem (Theorem~\ref{form}), a
neighborhood in $M$ of an orbit $\mathrm O$ in $\phi^{-1}(C)$ with
stabilizer group
  $(H)$ is isomorphic to
$A=G\times_H (\mathfrak{h}^{\circ}\times V)=G\times_H
(\mathfrak{h}^{\circ}\times W\times V^H)$, where
$\mathfrak{h}^{\circ}$ is the annihilator of
 $\mathfrak{h}=\,$Lie$(H)$ in $\mathfrak{g}^*=\,$Lie$^*(G)$, $V$ is a symplectic representation
 of $H$,
$V^H$ is the subspace of $V$ fixed by $H$ and $W$ is such that
$V=W\oplus V^H$. The moment map $\phi$ on $A$ is $\phi \left([g, a,
w, v]\right)=Ad^*(g)\left(\phi(\mathrm O)+ a + \psi(w)\right)$,
where $\psi$ is the moment map of the $H$ action on $W$. So
$A_{(H)}\cap\phi^{-1}(C) = G\times_H(\R^m\times 0\times V^H)$, where
$\R^m\subset\mathfrak h^{\circ}$ is the subspace which is mapped to
$C$ and therefore on which $H$ acts trivially.  Let
$\R^l\subset\mathfrak h^{\circ}$ be the subspace such that
$\mathfrak h^{\circ}=\R^l\oplus\R^m$.

Then $\left(A_{(H)}\cap\phi^{-1}(C)\right)/G=\R^m\times 0\times V^H$, and
$A/G=\R^m\times 0\times V^H\times (\R^l\times W)/H$. So
$L_H=S(\R^l\times W)/H$.

If $H\subseteq \Gh$, then
$\left(A_{(H)}\cap\phi^{-1}(C)\right)/\Gh=G/\Gh\times\R^m\times V^H$, and
$A/\Gh = G/\Gh\times\R^m\times V^H \times (\R^l\times W)/H$.
So $L'_H=S(\R^l\times W)/H=L_H$.

If the identity component of $H$ is contained in $\Gh$, but
$H\nsubseteq \Gh$, then $\left(A_{(H)}\cap\phi^{-1}(C)\right)/\Gh$
$=G/\Gh\times_H (\R^m\times V^H)=(\R^m\times V^H)\times
G/\Gh\times_H 0$, and $A/\Gh=G/\Gh\times_H (\R^m\times V^H\times
\R^l\times W)=(\R^m\times V^H)\times G/\Gh\times_H (\R^l\times W).$
So $L'_H=S(\R^l\times W)/(H\cap \Gh)$.

  The claim for the case
$H\subseteq \Gh$ follows immediately. For the case $H\nsubseteq
\Gh$, note that $H\cap \Gh$ leaves out finitely many elements of
$H$.  If  $L_H$ is connected and simply
connected,  by the criteria of the proof,   $L'_H$ is connected and simply connected. We refer to our
outline of proof of Lemma~\ref{Glink0}  below and the proof of Lemma 6.19
in \cite{L2}  for details.
\end{proof}

In \cite{L2}, we proved the one-connectedness of the link $L_H$ occuring in Lemma~\ref{Glinksame}.
 Here, in the outline of proof, we give the proof  for one case. We refer to the proof of Lemma 6.19
in \cite{L2} for details.

\begin{lemma}\label{Glink0}
 The link $L_H$  in Lemma~\ref{Glinksame}  is connected and
simply connected.
\end{lemma}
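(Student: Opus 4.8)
From Lemma~\ref{Glinksame} we know $L_H = S(\R^l\times W)/H$, where $\R^l$ is the complement in $\mathfrak h^{\circ}$ of the subspace $\R^m$ mapped to the central face $C$, and $W$ is the non-fixed summand of the symplectic slice $V=W\oplus V^H$ at an orbit of $\phi^{-1}(C)$ with stabilizer $(H)$. The first thing I would record is that $W\neq 0$: since $C$ is not the only face of $\mathfrak t^*_+$ meeting $\mbox{im}(\phi)$, the principal face $\tau^P$ strictly contains $C$ in its closure, the principal orbit type lies over the principal cross section $\phi^{-1}(U_{\tau^P})$, and $U_{\tau^P}$ is disjoint from $C$; hence $\phi^{-1}(C)$ meets no principal orbit, so $H$ is not the principal stabilizer and $W\neq 0$. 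In particular $S(\R^l\times W)$ is a nonempty connected sphere. The plan is then to proceed in two stages: first pass to the identity component $H^0$, and then handle the finite group $H/H^0$ using Armstrong's theorem (Theorem~\ref{thmarm}).

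For the first stage I would show that $S(\R^l\times W)/H^0$ is connected and simply connected, by a case analysis on $d:=\dim_\R(\R^l\times W)$. If $d\geq 3$, then $S(\R^l\times W)$ is itself simply connected, and applying Theorem~\ref{thmarm} to the connected group $H^0$ (its identity component is all of $H^0$, so the distinguished subgroup there is $H^0$ itself) gives $\pi_1\big(S(\R^l\times W)/H^0\big)=1$. The only remaining possibility is $d=2$, which forces $\R^l=0$ and $W\cong\C$; here $\R^l=0$ means $H$ is a maximal-dimensional stabilizer on $\phi^{-1}(C)$, so $\dim H^0>0$ because $G$ is nonabelian, and $H^0$ acts nontrivially on $W$ (otherwise it would act trivially on all of $V=W\oplus V^H$, which is excluded for a proper orbit type), whence $S(W)/H^0$ is a single point. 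Either way the intermediate quotient is connected and simply connected.

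For the second stage, the finite group $\Gamma=H/H^0$ acts on the simply connected space $X=S(\R^l\times W)/H^0$ with $X/\Gamma=L_H$, so Theorem~\ref{thmarm} gives $\pi_1(L_H)\cong\Gamma/\Gamma'$, where $\Gamma'$ is the normal subgroup of $\Gamma$ generated by those classes that have a fixed point in $X$. It remains to prove $\Gamma'=\Gamma$, equivalently that for every $h\in H$ the coset $hH^0$ contains an element fixing some point of $S(\R^l\times W)$. This last verification is the technical heart of the lemma and the step I expect to be the main obstacle: it is handled by decomposing $\R^l\times W$ into $H^0$-isotypic summands that $\Gamma$ permutes, choosing a finite-order representative of $hH^0$, and exhibiting inside a summand preserved by that representative a direction on which a suitable $H^0$-translate of it acts trivially. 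Since the links $L_H$ here have exactly the form occurring in \cite{L2}, the details parallel the proof of Lemma~6.19 in \cite{L2}, to which I would refer for the complete argument; as in the statement preceding this lemma, one may content oneself with carrying out a single representative case (say $d\geq 3$ with $H$ connected) and deferring the rest.
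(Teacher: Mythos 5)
Your overall skeleton (reduce to $L_H=S(\R^l\times W)/H$, first quotient by $H^0$, then apply Armstrong's theorem to the component group, deferring the fixed-point verification to Lemma~6.19 of \cite{L2}) is the same as the paper's, but two of your preliminary reductions are genuinely wrong. First, the claim that $W\neq 0$ because ``$\phi^{-1}(C)$ meets no principal orbit'' is false: the flow-out $G\cdot\phi^{-1}(U_{\tau^P})$ of the principal cross section is only \emph{dense} in $M$, not all of it, and principal orbits can map into the central face. For instance, for $T^*SU(2)$ with the lifted left action the free orbit $\phi^{-1}(0)$ lies over $C=\{0\}$ and has $W=0$; for $T^*S^2$ with the $SU(2)$-action the zero section lies over $C$ with $H=T$ and $W=0$ while not being principal. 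So $W$ can vanish, and then the simple connectivity of $L_H$ must come from the $\R^l$-factor. What the paper uses instead is the splitting $G=K\times T_c$ and the dichotomy: either $\dim\R^l\geq 2$ (when $H\nsupseteq K$, since $\R^l\subset\mbox{Lie}^*(K)$ is the annihilator of a proper subalgebra), or $\R^l=0$ and $H=K\times T_1$. Your case analysis on $d=\dim(\R^l\times W)$ is therefore incomplete: with $W=0$ allowed, $d=2$ does not force $\R^l=0$, and the configuration $\dim\R^l=2$, $W=0$ (where one-connectedness comes from the coadjoint $H$-action on $\R^l$, as in \cite{L2}) is not covered by your cases.

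Second, in your $d=2$ case the exclusion of ``$H^0$ acts trivially on $W$'' is a non sequitur: being a non-principal orbit type only says that $H$, not $H^0$, acts nontrivially on $W$; a finite component group acting by rotations on $W\cong\C$ with $H^0$ acting trivially is not ruled out by that remark. This gap is fatal where it occurs, since if $H^0$ acted trivially then $S(W)/H^0\cong S^1$ is not simply connected, Armstrong's theorem no longer applies at your second stage, and $L_H$ could fail to be simply connected --- so this scenario must be excluded, and the only available tool is the hypothesis that $C$ is not the only face of $\mathfrak t^*_+$ meeting $\mbox{im}(\phi)$. That is exactly how the paper argues in the case $\R^l=0$ (equivalently $H=K\times T_1$): if $K$ acted trivially on $W$ the local moment image near the orbit would be central, forcing the principal face to be $C$; hence $W$ is a nontrivial representation of the semisimple factor $K$, so $\dim_\R W\geq 4$, the case $d=2$ never occurs, and $S(W)/H^0$ is connected and simply connected, after which the finite group $T_1/T_1^0$ is handled by the torus fixed-point argument and Theorem~\ref{thmarm}. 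In short, the missing idea is the use of the moment-image hypothesis through the $K$ versus $T_c$ splitting; without it your first stage does not go through, independently of the deferred fixed-point step.
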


\begin{proof} [Outline of proof]
We saw in the proof of Lemma~\ref{Glinksame} that
$$L_H = S(\R^l\times W)/H.$$
 If we split $G=K\times T_c$, where $K$ is
semisimple and $T_c$ is abelian, then $\R^l\subset\mbox{Lie}^*(K)$
is a subspace on which $H$ acts by the coadjoint action; and,
$\dim(\R^l)\geq 2$ unless $H=K\times T_1$, where $T_1\subset T_c$ is
a subgroup, and in the latter case, $\R^l = 0$.

Let us only consider the case when $H=K\times T_1$. Then $L_H =
S(W)/(K\times T_1)$ in this case. Since $\mbox{im}(\phi)$ intersects
other faces other than $C$, $W$ is a nontrivial $K$ representation
with real dimension at least $4$. Hence $S(W)$ is connected and
simply connected. Let $H^0 = K\times T_1^0$ be the identity
component of $H$. Then $S(W)/H^0$ is connected and simply connected.
The finite subgroup $T_1/T_1^0$ acts on $S(W)/H^0$. Since
$T_1/T_1^0$ acts on $S(W)$ as a subgroup of a torus,  each element
must fix a point in $S(W)/H^0$. Hence, by Theorem~\ref{thmarm},
$L_H=\left(S(W)/H^0\right)/(T_1/T_1^0)$ is connected and simply
connected.
\end{proof}

Using Lemmas~\ref{Glinksame} and \ref{Glink0},  we can remove $\phi^{-1}(C)/\Gh$ from a
quotient as in Lemma~\ref{Glink}, where $C$ is the central face of
$\mathfrak t^*_+$.

\begin{lemma}\label{Glink}
Let $(M, \omega)$ be a connected Hamiltonian $G$-manifold with
proper moment map $\phi$, where $G$ is nonabelian.   Let $\Gh\subset
G$ be a connected subgroup which contains the identity component of
each stabilizer group. Let $C$ be the central face of $\mathfrak
t^*_+$, and assume that $C\cap\mbox{im}(\phi)\neq \emptyset$ and
that $C$ is not the only face of $\mathfrak t^*_+$ which intersects
$\mbox{im}(\phi)$. Then
$$\pi_1\big(M/\Gh\big)\cong \pi_1\big(M/\Gh-\phi^{-1}(C)/\Gh\big).$$
Similarly, if $O$ is a small open invariant  neighborhood of $C$ in
$\mathfrak g^*$, then
$$\pi_1\big(\phi^{-1}(O)/\Gh\big)\cong \pi_1\big(\phi^{-1}(O)/\Gh-\phi^{-1}(C)/\Gh\big).$$
\end{lemma}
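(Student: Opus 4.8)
The plan is to reduce the removal of $\phi^{-1}(C)/\Gh$ to a sequence of applications of Lemma~\ref{remove:str}, exactly as in the compact case in \cite{L2}, but now using Lemma~\ref{Glinksame} and Lemma~\ref{Glink0} in place of the link computations available there. First I would observe that $\phi^{-1}(C)$ is a closed $G$-invariant subset of $M$ (it is the preimage of the closed face $C$, and $\phi$ is continuous and equivariant), so $\phi^{-1}(C)/\Gh$ is a closed subset of $M/\Gh$, and it decomposes into the pieces $\big(M_{(H)}\cap\phi^{-1}(C)\big)/\Gh$ as $(H)$ ranges over the orbit types meeting $\phi^{-1}(C)$. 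Each such piece is a stratum of the stratified space $M/\Gh$, and the partial order on orbit types (with $(H)\supseteq(H')$ when $M_{(H)}\subset\overline{M_{(H')}}$) gives a well-ordering in which the strata contained in $\phi^{-1}(C)/\Gh$ can be removed top-down: remove the most singular (smallest-dimensional, largest-stabilizer) pieces first so that what remains is always a stratified space and each piece about to be removed is closed in the current space.

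The key input is that each stratum to be removed has connected and simply connected link. By Lemma~\ref{Glink0}, the link $L_H$ of $\big(M_{(H)}\cap\phi^{-1}(C)\big)/G$ in $M/G$ is connected and simply connected (this uses precisely the hypothesis that $C$ is not the only face of $\mathfrak t^*_+$ meeting $\mbox{im}(\phi)$, which forces the relevant symplectic slice representation $W$ to be nontrivial and of large enough dimension). Then Lemma~\ref{Glinksame} transfers this: the link $L'_H$ of the corresponding stratum in $M/\Gh$ is connected and simply connected whenever $L_H$ is. Thus at each stage of the removal, Lemma~\ref{remove:str} applies and deletion of that stratum does not change $\pi_1$. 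Iterating over all orbit types meeting $\phi^{-1}(C)$ — which is a locally finite collection, and one checks local finiteness of the process near each point using the local normal form of Theorem~\ref{form}, so that the Van Kampen arguments underlying Lemma~\ref{remove:str} go through — gives $\pi_1\big(M/\Gh\big)\cong\pi_1\big(M/\Gh-\phi^{-1}(C)/\Gh\big)$. For the second statement, I would run the identical argument with $M$ replaced by the open $G$-invariant set $\phi^{-1}(O)$: the link computations in Lemmas~\ref{Glinksame} and \ref{Glink0} are local, obtained from the local normal form near an orbit in $\phi^{-1}(C)$, so they are unchanged by passing to this neighborhood, and the same top-down removal yields $\pi_1\big(\phi^{-1}(O)/\Gh\big)\cong\pi_1\big(\phi^{-1}(O)/\Gh-\phi^{-1}(C)/\Gh\big)$.

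The main obstacle I anticipate is making the ``inductive removal'' precise when the set of orbit types meeting $\phi^{-1}(C)$ is infinite and the strata are noncompact: Lemma~\ref{remove:str} is stated for a single closed stratum with the right link, and one must be careful that after removing countably many strata one still has a stratified space and that $\pi_1$ is unchanged in the limit (this is where paracompactness, local finiteness of the decomposition, and a direct-limit / exhaustion argument enter). In the compact case of \cite{L2} finiteness of the stratification sidesteps this; here properness of $\phi$ is what one leans on to recover enough local finiteness. A secondary, more bookkeeping-type point is verifying that removing a stratum does not disconnect or otherwise damage the remaining space — but this follows because each link is connected, so the removed stratum has a neighborhood that deformation retracts onto it with connected link-fibers, hence its complement is locally connected along it. Once these are in place, the proof is a routine transcription of the compact argument with Lemmas~\ref{Glinksame} and \ref{Glink0} supplying the link hypotheses.
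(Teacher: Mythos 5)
Your proposal matches the paper's proof: the paper likewise deduces from Lemmas~\ref{Glinksame} and \ref{Glink0} that each stratum of $\phi^{-1}(C)/\Gh$ has connected and simply connected link, and then inductively removes these strata via Lemma~\ref{remove:str}, both in $M/\Gh$ and in $\phi^{-1}(O)/\Gh$. Your extra care about the order of removal and local finiteness only elaborates details the paper leaves implicit, so the approach is essentially identical.
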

\begin{proof}
By Lemmas~\ref{Glinksame}  and \ref{Glink0},  for each possible $H$, the link of
$\left(M_{(H)}\cap\phi^{-1}(C)\right)/\Gh$ in $M/\Gh$ is connected
and simply connected. Therefore, using Lemma~\ref{remove:str}, we
can inductively remove the strata of $\phi^{-1}(C)/\Gh$ from $M/\Gh$
or from $\phi^{-1}(O)/\Gh$.
\end{proof}

For any other removing, we need to work in a suitable cross section,
and use Lemmas~\ref{Glinksame}  and \ref{Glink0} for a subgroup action. Then, using the
equivariance of $\phi$, we do corresponding removing in the quotient
of $M$ or in the quotient of an invariant subset of $M$. We will
need Lemma~\ref{link}.

\begin{lemma}\label{link}
Let $(M, \omega)$ be a connected Hamiltonian $G$-manifold with
proper moment map $\phi$, where $G$ is nonabelian.   Let $\Gh\subset
G$ be a connected subgroup which contains the identity component of
each stabilizer group.
\begin{enumerate}
\item  Let $\tau$ be a non-principal
  face of  $\mathfrak t^*_+$  such that
  $\tau\cap\mbox{im}(\phi)\neq\emptyset$ and let $R^{\tau}$ be the corresponding cross section. Then  for each
 $M_{(H)}\cap\phi^{-1}(\tau)$, the link of
  $\left(M_{(H)}\cap\phi^{-1}(\tau)\right)/(G_{\tau}\cap \Gh)$ in $R^{\tau}/(G_{\tau}\cap \Gh)$
  is the same as the link of  $\left(M_{(H)}\cap\phi^{-1}(G\cdot\tau)\right)/\Gh$ in $\big(G\cdot R^{\tau}\big)/\Gh$.
\item  Let $\tau^P$ be the principal face and let $R^{P}$
be the principal cross section. Let
 $c\in\tau^P$ be a singular value. Then  for each  $M_{(H)}\cap\phi^{-1}(c)$, the link of
$\left(M_{(H)}\cap\phi^{-1}(c)\right)/(T\cap \Gh)$ in
 $R^{P}/(T\cap \Gh)$ is the same
 as the link of  $\left(M_{(H)}\cap\phi^{-1}(G\cdot c)\right)/\Gh$ in $\big(G\cdot R^{P}\big)/\Gh$.
\end{enumerate}
\end{lemma}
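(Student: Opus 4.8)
The statement asserts that a certain link computed inside a cross section agrees with the corresponding link computed in $M$ (or in $G\cdot R^{\tau}$) after passing to the $\Gh$-quotient. The guiding principle is that the cross section theorem (Theorem~\ref{cross}) realizes a $G$-invariant neighborhood of $\phi^{-1}(G\cdot\tau)$ in $M$ as $G\cdot R^{\tau}\cong G\times_{G_{\tau}}R^{\tau}$, and this identification is compatible with orbit-type decompositions and with moment maps. My plan is to unwind both sides using the local normal form theorem (Theorem~\ref{form}) applied inside the cross section, where the relevant $G_{\tau}$-orbits are isotropic (as observed in the two bullet points following Theorem~\ref{cross}), and then check that the two link computations literally produce the same stratified space.

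\medskip

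First I would set up part (1). Fix a point $p$ with stabilizer $H$ such that $\phi(G\cdot p)\in G\cdot\tau$; after translating by an element of $G$ we may assume $\phi(p)\in\tau$, so $p\in R^{\tau}$ and $H\subseteq G_{\tau}$. By Theorem~\ref{form} applied to the Hamiltonian $G_{\tau}$-action on $R^{\tau}$, a neighborhood of $G_{\tau}\cdot p$ in $R^{\tau}$ is isomorphic to $G_{\tau}\times_H(\mathfrak{h}^{\circ}_{G_{\tau}}\times V)$, where $\mathfrak{h}^{\circ}_{G_{\tau}}$ is the annihilator of $\mathfrak h$ in $\mathfrak g_{\tau}^*$ and $V$ splits as $W\oplus V^H$. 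A neighborhood of $G\cdot p$ in $G\cdot R^{\tau}\cong G\times_{G_{\tau}}R^{\tau}$ is then $G\times_H(\mathfrak{h}^{\circ}_{G_{\tau}}\times V)$. Now form the $(H)$-orbit-type piece intersected with $\phi^{-1}(\tau)$, resp.\ $\phi^{-1}(G\cdot\tau)$, quotient by $G_{\tau}\cap\Gh$, resp.\ by $\Gh$, and compute the link as in the proofs of Lemmas~\ref{Tlinksame} and \ref{Glinksame}: in both cases, after stripping off the free directions ($G_{\tau}/(G_{\tau}\cap\Gh)$ on one side, $G/\Gh$ on the other — note $G\times_{G_{\tau}}\big(G_{\tau}/(G_{\tau}\cap\Gh)\big)\cong G/(G_{\tau}\cap\Gh)$ fibers over $G/G_{\tau}$, contributing no link) and the trivial $\mathbb R^m$ and $V^H$ directions, one is left with $S(\R^l\times W)/(H\cap\Gh)$ on both sides, where $\R^l$ is the complement of $\R^m$ in $\mathfrak{h}^{\circ}_{G_{\tau}}$. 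The $\mathbb R^l$ here is genuinely the slice-normal direction inside $\mathfrak g_{\tau}^*$, which is exactly what the link in the cross section sees; the point is that passing from $R^{\tau}$ to $G\cdot R^{\tau}$ only adds orbit directions, not link directions. This gives the equality of the two links.

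\medskip

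Part (2) is the special case where $\tau=\tau^P$ is the principal face, $G_{\tau}=T$ acts on $R^P$, and one looks at a singular value $c\in\tau^P$ rather than a whole face; the argument is the same with $G_{\tau}\cap\Gh$ replaced by $T\cap\Gh$, using the local normal form for the $T$-action on $R^P$ exactly as in the proof of Lemma~\ref{Tlinksame} (with $N=M$) — here one uses $\phi^{-1}(c)$ in place of $\phi^{-1}(\mathcal F)$. The main obstacle, and the only place requiring care, is bookkeeping the quotient by the possibly disconnected group: one must track which finitely many elements of $H$ fail to lie in $\Gh$ (equivalently, in $G_{\tau}\cap\Gh$ or $T\cap\Gh$), and verify that the finite-group quotient is performed by the \emph{same} finite group on both sides, so that the resulting $S(\R^l\times W)/(H\cap\Gh)$ literally coincide — not merely have isomorphic fundamental groups. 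This is handled exactly by the orbit-type analysis in the cited Lemmas~\ref{Tlinksame} and \ref{Glinksame} and in Lemma 6.19 of \cite{L2}; I would reduce to those computations rather than redo them. Once part (1) and part (2) are in place, they feed into the removing arguments (via Lemma~\ref{remove:str}) needed to complete the proof of Theorem~\ref{hatG} for nonabelian $G$.
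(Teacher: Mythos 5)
Your proposal is correct and follows essentially the same route as the paper: both rest on the identification $G\cdot R^{\tau}\cong G\times_{G_{\tau}}R^{\tau}$, compatible with the orbit-type pieces and with $\phi$ by equivariance, so that all directions transverse to the stratum lie in the cross section. The paper phrases this as a fibration over $G/G_{\tau}$ and stops there, while you additionally unwind the local normal form to exhibit both links explicitly as $S(\R^l\times W)/(H\cap\Gh)$ — a more detailed rendering of the same argument.
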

\begin{proof}
 First note that, if a point with stabilizer group $H$ is mapped to a face $\tau$, then since $\phi (g\cdot m)=Ad^*(g)\cdot\phi(m)$,
$H\subset G_{\tau}$.

(1) The manifold $G\cdot R^{\tau}$ is a fibration over the coadjoint orbit $G/G_{\tau}$ with fiber $R^{\tau}$; and
correspondingly, each  $M_{(H)}\cap (G\cdot R^{\tau})$ is a fibration over $G/G_{\tau}$ with fiber
$M_{(H)}\cap R^{\tau}$, moreover,  each $M_{(H)}\cap\phi^{-1}(G\cdot \tau)$ is a fibration over $G/G_{\tau}$ with fiber
$M_{(H)}\cap\phi^{-1}(\tau)$.  The claim follows from the equivariance of $\phi$.

The proof of (2) is similar.
\end{proof}

\begin{remark}
Note that if $\tau\subset\mathfrak t^*_+$ is a face and $G_{\tau}$
is its stabilizer, the subgroup $G_{\tau}\cap \Gh$ of $G_{\tau}$ is
connected  since $G_{\tau}$ and $\Gh$ are both connected.
\end{remark}

Let us first consider the values of $\phi$ on the principal face. We
obtain Lemma~\ref{Gisom}.

\begin{lemma}\label{Gisom}
Let $(M, \omega)$ be a connected Hamiltonian $G$-manifold with
proper moment map $\phi$, where $G$ is nonabelian.   Let $\Gh\subset
G$ be a connected subgroup which contains the identity component of
each stabilizer group.   Let  $\tau^P\subset\mathfrak t^*_+$
   be the principal face. Then
 $\pi_1\big(\phi^{-1}(G\cdot\tau^P)/\Gh\big)\cong\pi_1\big(\phi^{-1}(G\cdot a)/\Gh\big)$ for
  all $a\in\tau^P$.
  \end{lemma}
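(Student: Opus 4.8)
The plan is to push the entire statement into the principal cross section, where the $G_{\tau^P}$-action over $\tau^P$ is effectively a torus action and Lemmas~\ref{Tisom}, \ref{Tlinksame} and \ref{Tlink0} are available, and then to transport the conclusions back to $M$ using the equivariance of $\phi$ together with the link identification of Lemma~\ref{link}. Let $R^P=\phi^{-1}(U_{\tau^P})$ be the principal cross section; by Theorem~\ref{cross} the maximal torus $T$ (indeed $G_{\tau^P}$) acts on $R^P$ with proper moment map $\phi|_{R^P}$, and since $\tau^P$ lies in the centre of $\mathfrak g^*_{\tau^P}$ the $G_{\tau^P}$-orbits over $\tau^P$ are isotropic, so over $\tau^P$ we are in the abelian setting of Section~\ref{section:hatT}. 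Observe that $\phi^{-1}(G\cdot\tau^P)=G\cdot\phi^{-1}(\tau^P)$ and $\phi^{-1}(G\cdot a)=G\cdot\phi^{-1}(a)$ for $a\in\tau^P$ all lie in $G\cdot R^P$, and that $G_a=G_{\tau^P}$ for every $a\in\tau^P$. Note also that, since $G\ne\Gh G_{\tau^P}$ in general, $\phi^{-1}(G\cdot\tau^P)/\Gh$ is \emph{not} a quotient of $R^P$, so the reduction to the cross section must be carried out at the level of local models and links only.

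First I would compare nearby values. If $a,a'\in\tau^P$ are regular values of $\phi|_{R^P}$ lying in one connected chamber, then $\phi^{-1}(a)$ and $\phi^{-1}(a')$ are $G_{\tau^P}$-equivariantly diffeomorphic (the gradient flow of the components of $\phi|_{R^P}$ along the central face $\tau^P$ is $G_{\tau^P}$-equivariant; cf.\ Lemma~\ref{Tisom}); hence $\phi^{-1}(G\cdot a)\cong G\times_{G_{\tau^P}}\phi^{-1}(a)$ is $G$-equivariantly diffeomorphic to $\phi^{-1}(G\cdot a')$, so $\pi_1\big(\phi^{-1}(G\cdot a)/\Gh\big)\cong\pi_1\big(\phi^{-1}(G\cdot a')/\Gh\big)$. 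If $c\in\tau^P$ is a singular value of $\phi|_{R^P}$ and $a\in\tau^P$ is a regular value very near $c$, I would choose a small neighbourhood $O$ of $c$ in $\tau^P$, let $O'$ be its intersection with the chamber of $a$, $\overline{O'}$ its closure and $B=\overline{O'}-O'$. By Theorem~\ref{retract} applied in $R^P$ and the equivariance of $\phi$, for $O$ small enough $\phi^{-1}(G\cdot\overline{O'})$ $G$-equivariantly deformation retracts onto $\phi^{-1}(G\cdot c)$, so their $\Gh$-quotients have isomorphic fundamental group. On the other hand I would remove $\phi^{-1}(G\cdot B)/\Gh$ from $\phi^{-1}(G\cdot\overline{O'})/\Gh$ by the same induction as in Lemma~\ref{T-remove}: each stratum to be removed has, by Lemmas~\ref{Tlinksame} and \ref{Tlink0} applied to $R^P$ and the subgroup $T\cap\Gh$, a connected and simply connected link inside the cross-section quotient, and by Lemma~\ref{link} this link agrees with the link of the corresponding stratum inside $\phi^{-1}(G\cdot\overline{O'})/\Gh$, so Lemma~\ref{remove:str} applies and yields $\pi_1\big(\phi^{-1}(G\cdot\overline{O'})/\Gh\big)\cong\pi_1\big(\phi^{-1}(G\cdot a)/\Gh\big)$. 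Combining, $\pi_1\big(\phi^{-1}(G\cdot c)/\Gh\big)\cong\pi_1\big(\phi^{-1}(G\cdot a)/\Gh\big)$.

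The set $\tau^P\cap\mbox{im}(\phi)$ is convex, being relatively open in $\tau^P$ inside the convex locally polyhedral set of Theorem~\ref{convexity}, and it is cut by the locally finite stratification of the $T$-action on $R^P$ into finitely many chambers and walls along any path; hence by chaining the two comparisons above, $\pi_1\big(\phi^{-1}(G\cdot a)/\Gh\big)$ is the same for all $a\in\tau^P$. Finally, to identify this common group with $\pi_1\big(\phi^{-1}(G\cdot\tau^P)/\Gh\big)$, I would run the alternating removing-and-deforming argument of \cite{L2} --- the one establishing $\pi_1(M/G)\cong\pi_1(M_c)$ in the compact case --- directly on the stratified space $\phi^{-1}(G\cdot\tau^P)/\Gh$, using Theorem~\ref{retract} (transported to $M$ by equivariance) for the deformation retractions and Lemma~\ref{link} together with Lemmas~\ref{Tlinksame} and \ref{Tlink0} to justify each removal; this collapses $\phi^{-1}(G\cdot\tau^P)/\Gh$ onto a neighbourhood of $\phi^{-1}(G\cdot a)/\Gh$ for a suitable $a\in\tau^P$, giving the assertion.

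The main obstacle I expect is the link bookkeeping in the non-abelian case with possibly disconnected stabilizers: one must verify that \emph{every} stratum removed --- from $\phi^{-1}(G\cdot\overline{O'})/\Gh$ and from $\phi^{-1}(G\cdot\tau^P)/\Gh$ --- has connected, simply connected link, and the clean route is to identify that link, via the fibration $G\cdot R^P\to G/G_{\tau^P}$ underlying Lemma~\ref{link}, with the corresponding link in the cross-section quotient $R^P/(T\cap\Gh)$, where Lemmas~\ref{Tlinksame} and \ref{Tlink0} apply. The delicate point is to arrange the boundary ($N=\phi^{-1}(\overline U)$) version of Lemma~\ref{link} so that it matches the link formula of Lemma~\ref{Tlink0}(2), and to handle the (possibly disconnected) subgroup $T\cap\Gh$. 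Once the links are under control, patching the local isomorphisms into one isomorphism over the connected face $\tau^P$, and the passage from level sets to their union, are routine adaptations of the compact-case arguments in \cite{L2}.
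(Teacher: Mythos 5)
Your proposal is correct and follows essentially the same route as the paper: both reduce all link computations and removals to the abelian setting over the principal face (the $T$, resp.\ $T\cap\Gh$, action on $\phi^{-1}(\tau^P)$ in the principal cross section, via Lemmas~\ref{Tisom}, \ref{Tlinksame}, \ref{Tlink0} and \ref{T-remove}), transfer them to $\phi^{-1}(G\cdot\tau^P)/\Gh$ by Lemma~\ref{link}~(2) and equivariance, treat values in one chamber by equivariant diffeomorphism and singular values by Theorem~\ref{retract} plus removal over $\overline{O'}$, and identify $\pi_1\big(\phi^{-1}(G\cdot\tau^P)/\Gh\big)$ with $\pi_1\big(\phi^{-1}(G\cdot a)/\Gh\big)$ for a particular $a$ by the alternating removing-and-deforming argument of \cite{L2}. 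The only difference is the order of steps (the paper performs the global collapse to a particular value first, then the chamber-to-chamber comparisons), which is immaterial.
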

\begin{proof}
First, consider the space $\phi^{-1}(\tau^P)$ with the $T$ action,
where $T$ is a maximal torus of $G$. Similar to the proof of the
theorem for abelian group actions, by using removing and deforming
in the space $\phi^{-1}(\tau^P)/(\Gh\cap T)$, we get
$$\pi_1\big(\phi^{-1}(\tau^P)/(\Gh\cap T)\big)\cong\pi_1\big(\phi^{-1}(a)/(\Gh\cap
T)\big)$$ for some {\em particular} $a\in \tau^P$. Then by
Lemma~\ref{Tlinksame}  and Lemma~\ref{link} (2),  we can do
corresponding removing and deforming in the space
$\phi^{-1}(G\cdot\tau^P)/\Gh$, and we arrive at
$$\pi_1\big(\phi^{-1}(G\cdot\tau^P)/\Gh\big)\cong\pi_1\big(\phi^{-1}(G\cdot a)/\Gh\big)$$ for
this {\em particular} $a\in\tau^P$.

For any two  values $a$ and $b$ in the same connected chamber on
$\tau^P$,
 $$\pi_1\big(\phi^{-1}(G\cdot a)/\Gh\big)\cong \pi_1\big(\phi^{-1}(G\cdot b)/\Gh\big)$$
since  $\phi^{-1}(G\cdot a)$ and  $\phi^{-1}(G\cdot b)$ are
equivariantly diffeomorphic. Now, let $c\in\tau^P$ be a value on a
singular face. Take a value $a\in\tau^P$ in a connected open face
very close to $c$. Take $\overline {O'}\subset \tau^P$ as in
Lemma~\ref{T-remove}. Using Lemma~\ref{link} (2) again, we can do
removing in $\phi^{-1}(G\cdot\overline{O'})/\Gh$ corresponding to
the removing in $\phi^{-1}(\overline{O'})/(T\cap \Gh)$; and we
arrive at
$$\pi_1\big(\phi^{-1}(G\cdot a)/\Gh\big)\cong\pi_1\big(\phi^{-1}(G\cdot c)/\Gh\big).$$
\end{proof}

Next, we consider a value $c$ on a nonprincipal face and a nearby
generic value $a$ on the principal face and prove
Lemma~\ref{Gisom'}. We first do the necessary removing in
Lemma~\ref{remove}.

\begin{lemma}\label{remove}
 Let $(M, \omega)$ be a connected Hamiltonian $G$-manifold with
proper moment map $\phi$, where $G$ is nonabelian.   Let $\Gh\subset
G$ be a connected subgroup which contains the identity component of
each stabilizer group. Let $c\in\tau$ be a value, where $\tau$ is a
face of
   $\mathfrak t^*_+$ such that  $\tau\neq\tau^P$, and let $a$ be a generic value on
    $\tau^P$ very near $c$.
Let $O\subset\mathfrak{g}^*$ be a small open invariant neighborhood
of $c$ containing $a$.
    Let $B$ be the set of values in $O\cap\mathfrak t^*_+$  other than those on the open connected
    chamber of generic values on  $\tau^P$ containing $a$.
   Then
   $$\pi_1\big(\phi^{-1}(O)/\Gh\big)\cong\pi_1\big(\phi^{-1}(O)/\Gh-\phi^{-1}(G\cdot
   B)/\Gh\big).$$
   \end{lemma}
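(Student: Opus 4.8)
The plan is to reduce the removing of $\phi^{-1}(G\cdot B)/\Gh$ from $\phi^{-1}(O)/\Gh$ to a removing problem inside a suitable cross section, where the earlier abelian machinery applies. First I would observe that, shrinking $O$ if necessary, the only faces of $\mathfrak t^*_+$ that $O$ meets are $\tau^P$ and $\tau$ together with the faces lying in $\overline{\tau^P}\cap\overline{\tau}$; in particular every point of $\phi^{-1}(O)$ has stabilizer contained in $G_\tau$ (or in a subgroup of it), so $\phi^{-1}(O)$ is contained, up to $G$-translation, in the cross section $R^\tau=\phi^{-1}(U_\tau)$. Thus $\phi^{-1}(O)=G\cdot\big(R^\tau\cap\phi^{-1}(O)\big)$ fibers over $G/G_\tau$ with fiber $R^\tau\cap\phi^{-1}(O)$, and likewise $\phi^{-1}(G\cdot B)=G\cdot\big(R^\tau\cap\phi^{-1}(B)\big)$; here $B\subset U_\tau$ since $B\subset O\cap\mathfrak t^*_+$ lies in the slice.

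Next I would pass to the $G_\tau$-manifold $R^\tau$ with its $G_\tau$-moment map $\phi|_{R^\tau}$ (Theorem~\ref{cross}). The subgroup $G_\tau\cap\Gh$ is connected (by the remark after Lemma~\ref{link}) and contains the identity component of each stabilizer group of the $G_\tau$-action on $R^\tau$. Moreover $a$, being generic on $\tau^P$, lies in the principal face of $R^\tau$, and $c$ lies in a face of $R^\tau$; so $B\cap U_\tau$ is exactly the set of values near $c$ not on the generic chamber through $a$, i.e.\ the configuration of Lemma~\ref{T-remove} and Lemma~\ref{Glink} applied inside $R^\tau$. If $G_\tau$ is abelian (equivalently $\tau^P$ is the principal face of a torus cross section, e.g.\ when $\tau$ is not the central face of $G_\tau$) I would invoke Lemmas~\ref{Tlinksame}, \ref{Tlink0} and \ref{T-remove} for the $(G_\tau\cap\Gh)$-action on $R^\tau\cap\phi^{-1}(\overline{O'})$; if $G_\tau$ is nonabelian and $\tau$ is its central face I would instead invoke Lemmas~\ref{Glinksame}, \ref{Glink0} and \ref{Glink} for $R^\tau$. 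Either way I obtain that the strata of $(R^\tau\cap\phi^{-1}(B))/(G_\tau\cap\Gh)$ have connected, simply connected links in $(R^\tau\cap\phi^{-1}(O))/(G_\tau\cap\Gh)$, and can be removed without changing $\pi_1$ via Lemma~\ref{remove:str}.

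Finally I would transport this removal back to $M$ using Lemma~\ref{link}(1): for each orbit type $M_{(H)}$ meeting $\phi^{-1}(\tau)$, the link of $\big(M_{(H)}\cap\phi^{-1}(\tau)\big)/(G_\tau\cap\Gh)$ in $R^\tau/(G_\tau\cap\Gh)$ equals the link of $\big(M_{(H)}\cap\phi^{-1}(G\cdot\tau)\big)/\Gh$ in $(G\cdot R^\tau)/\Gh$, and the analogous fibration statement identifies the links for the values in $B$. Hence the strata of $\phi^{-1}(G\cdot B)/\Gh$ inside $\phi^{-1}(O)/\Gh$ also have connected, simply connected links, and an inductive application of Lemma~\ref{remove:str} (removing strata in order of decreasing orbit-type, innermost faces last, exactly as in the proof of Lemma~\ref{Glink}) yields $\pi_1\big(\phi^{-1}(O)/\Gh\big)\cong\pi_1\big(\phi^{-1}(O)/\Gh-\phi^{-1}(G\cdot B)/\Gh\big)$.

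The main obstacle I expect is bookkeeping the stratification of $\phi^{-1}(G\cdot B)/\Gh$: $B$ is not a single face but a union of pieces of several faces of $\mathfrak t^*_+$ meeting $O$, so one must order the strata correctly (remove the lowest-codimension pieces first, the faces in $\overline{\tau^P}\cap\overline{\tau}$ last) and check at each step that the stratum being removed is \emph{closed} in the complement of the previously removed ones, so that Lemma~\ref{remove:str} genuinely applies; this is the same delicate induction carried out in the proof of Lemma~3.8 of \cite{L2}, now done $G$-equivariantly through the cross section $R^\tau$, and I would refer to that argument for the combinatorial details while emphasizing that Lemmas~\ref{Glinksame}, \ref{Tlinksame} and \ref{link} are precisely what make the passage between $R^\tau$ and $M$ legitimate.
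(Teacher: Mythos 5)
Your overall plan --- pass to the cross section $R^\tau$, check the hypotheses for the subgroup $G_\tau\cap\Gh$, compute links there, transfer them to the quotient of $M$ by Lemma~\ref{link}, and remove strata inductively with Lemma~\ref{remove:str} --- is the same as the paper's. But there is a genuine gap at the central step. The set $B$ is a union of pieces of several faces: the face $\tau$ itself, the intermediate non-principal faces $\tau'$ with $\tau\subset\overline{\tau'}$ that meet $O$, and the singular part of $O\cap\tau^P$. Lemmas~\ref{Glinksame}, \ref{Glink0} and \ref{Glink}, applied to the $G_\tau$-action on $R^\tau$, only control the links of strata lying over the central face of $G_\tau$, that is over $\tau$; no face $\tau'\neq\tau$ meeting $O$ lies in the central dual Lie algebra of $\mathfrak g_\tau$, since its stabilizer $G_{\tau'}$ is strictly smaller than $G_\tau$. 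Hence your assertion that ``either way'' all strata of $\big(R^\tau\cap\phi^{-1}(B)\big)/(G_\tau\cap\Gh)$ have connected and simply connected links is not delivered by the lemmas you cite. What the paper does, and what is missing from your argument, is a nested induction through cross sections: first remove $\phi^{-1}(G\cdot\tau)/\Gh$ using Lemma~\ref{Glink} in $R^\tau$ together with Lemma~\ref{link} (1); then remove each remaining non-principal face $\tau'$ by working in its own cross section $R^{\tau'}$, where $\tau'$ \emph{is} central for $G_{\tau'}$; and finally remove the singular faces on $O\cap\tau^P$ using the torus lemmas \ref{Tlinksame} and \ref{Tlink0} in the principal cross section together with Lemma~\ref{link} (2), with local deformation retractions (Theorem~\ref{retract}) interspersed where needed. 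Citing the abelian induction of Lemma 3.8 of \cite{L2} does not supply this; the relevant template is the nonabelian one (Lemma 6.18 of \cite{L2}).

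Two smaller corrections. First, the faces of $\mathfrak t^*_+$ that a small invariant neighborhood $O$ of $c\in\tau$ can meet are those $\tau'$ with $\tau\subseteq\overline{\tau'}$ (the star of $\tau$), not faces lying in $\overline{\tau^P}\cap\overline{\tau}$; the latter have stabilizers containing $G_\tau$ and must be \emph{avoided} precisely so that all stabilizers over $O$ lie in $G_\tau$ and $\phi^{-1}(O)\subset G\cdot R^{\tau}$ --- your conclusion is correct, but the stated reason has the face relation reversed. Second, the dichotomy ``$G_\tau$ abelian versus nonabelian'' is empty and the parenthetical suggesting that $\tau$ might fail to be the central face of $G_\tau$ is incorrect: since $\tau\neq\tau^P$, the face $\tau$ is a proper wall of $\mathfrak t^*_+$, so $G_\tau\supsetneq T$ is nonabelian, and $\tau$ always lies in the central dual Lie algebra of $G_\tau$; this last fact is exactly what makes Lemma~\ref{Glink} applicable in $R^\tau$ in the first removal step.
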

 \begin{proof}
Consider the cross section $R^{\tau}$ where $G_{\tau}$ acts. Note that $\tau$ lies in the central dual Lie algebra
of $G_{\tau}$.  Using Lemmas~\ref{Glink} and \ref{link}, we have
$$\pi_1\big(\phi^{-1}(O)/\Gh\big)\cong\pi_1\big(\phi^{-1}(O)/\Gh-\phi^{-1}(G\cdot \tau)/\Gh\big).$$
For other non-principal faces $\tau'$'s,  we use the cross section
theorem and Lemma~\ref{link} to inductively remove
$\phi^{-1}(G\cdot\tau')/\Gh$'s. If there are singular faces on
$O\cap\tau^P$,  then we use Lemmas~\ref{Tlinksame} and \ref{link} to
remove the rest (deforming may also be needed). If further detail is
prefered, one may refer to the proof of Lemma 6.18 in \cite{L2}.
\end{proof}

   \begin{lemma}\label{Gisom'}
Let $(M, \omega)$ be a connected Hamiltonian $G$-manifold with
proper moment map $\phi$, where $G$ is nonabelian.   Let $\Gh\subset
G$ be a connected subgroup which contains the identity component of
each stabilizer group. Let $c\in\tau$ be a value, where $\tau$ is a
face of
   $\mathfrak t^*_+$ such that  $\tau\neq\tau^P$, and let $a$ be a generic value on
    $\tau^P$ very near $c$. Then
 $\pi_1\big(\phi^{-1}(G\cdot c)/\Gh\big)\cong\pi_1\big(\phi^{-1}(G\cdot a)/\Gh\big)$.
   \end{lemma}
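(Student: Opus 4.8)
The goal is to compare $\pi_1$ of the two quotients $\phi^{-1}(G\cdot c)/\Gh$ and $\phi^{-1}(G\cdot a)/\Gh$, where $c$ lies on a non-principal face $\tau$ and $a$ is a nearby generic value on the principal face $\tau^P$. The natural strategy, mirroring the abelian case (Lemma~\ref{Tisom'}) and the analogous step in \cite{L2}, is to introduce a small open invariant neighborhood $O$ of $c$ in $\mathfrak g^*$ containing $a$, take $\phi^{-1}(O)/\Gh$ as a common ``ambient'' space, and realize each of $\phi^{-1}(G\cdot c)/\Gh$ and $\phi^{-1}(G\cdot a)/\Gh$ as a deformation retract of $\phi^{-1}(O)/\Gh$ after removing appropriate strata.

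First I would use properness of $\phi$ together with Theorem~\ref{retract} (in the form stated for cross sections in the second bulleted use of the cross section theorem) to get a $G$-invariant neighborhood $\mathcal N$ of $\phi^{-1}(G\cdot c)$ in $M$ that $G$-equivariantly deformation retracts to $\phi^{-1}(G\cdot c)$; taking $O$ small enough we may arrange $\phi^{-1}(O)\subset\mathcal N$ and that $\phi^{-1}(O)$ itself equivariantly retracts to $\phi^{-1}(G\cdot c)$. Passing to the $\Gh$-quotient (which is allowed since the retraction is $G$-equivariant and $\Gh\subset G$), this gives
$$\pi_1\big(\phi^{-1}(O)/\Gh\big)\cong\pi_1\big(\phi^{-1}(G\cdot c)/\Gh\big).$$
On the other side, Lemma~\ref{remove} supplies
$$\pi_1\big(\phi^{-1}(O)/\Gh\big)\cong\pi_1\big(\phi^{-1}(O)/\Gh-\phi^{-1}(G\cdot B)/\Gh\big),$$
where $B$ consists of the values in $O\cap\mathfrak t^*_+$ off the open generic chamber through $a$. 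The complement $\phi^{-1}(O)/\Gh-\phi^{-1}(G\cdot B)/\Gh$ fibers over (equivalently, $\Gh$-equivariantly retracts onto) a piece lying over the generic chamber, which in turn equivariantly deformation retracts onto $\phi^{-1}(G\cdot a)/\Gh$ since all generic values in one chamber have $G$-equivariantly diffeomorphic (hence retract-equivalent) fibers $\phi^{-1}(G\cdot a')$. Chaining these isomorphisms gives the claim.

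The main obstacle is the bookkeeping in the removing step, i.e.\ justifying Lemma~\ref{remove} cleanly and making sure the retraction after removal genuinely lands on $\phi^{-1}(G\cdot a)/\Gh$: one must remove, in the correct order (largest stabilizer last), the strata sitting over $\tau$, then over the other non-principal faces $\tau'$ met by $O$, and finally over any singular faces of $O\cap\tau^P$. Each removal requires that the relevant link be connected and simply connected, which is exactly what Lemmas~\ref{Glinksame}, \ref{Glink0}, \ref{Tlinksame}, \ref{Tlink0} and \ref{link} are set up to deliver --- the link of a stratum of $\phi^{-1}(G\cdot\tau')/\Gh$ in $\big(G\cdot R^{\tau'}\big)/\Gh$ agrees with the link computed in the cross section $R^{\tau'}$ for the $(G_{\tau'}\cap\Gh)$-action, where $\tau'$ is central and the orbits are isotropic so the local normal form applies. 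I would therefore organize the proof as: (i) set up $O$ and the retraction identity on the $c$-side; (ii) invoke Lemma~\ref{remove} for the removal; (iii) exhibit the retraction of the complement onto $\phi^{-1}(G\cdot a)/\Gh$; (iv) concatenate. One should also double-check the edge case where $\tau$ is the central face $C$, which is handled directly by Lemma~\ref{Glink} rather than via a cross section, but this is subsumed in the induction of Lemma~\ref{remove}.
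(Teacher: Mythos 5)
Your proposal is correct and takes essentially the same route as the paper's proof: shrink the invariant neighborhood $O$ so that $\phi^{-1}(O)$ equivariantly deformation retracts onto $\phi^{-1}(G\cdot c)$ (giving the isomorphism on the $c$-side after passing to the $\Gh$-quotient), invoke Lemma~\ref{remove} to delete $\phi^{-1}(G\cdot B)/\Gh$, and identify the fundamental group of the remaining piece over the generic chamber with that of $\phi^{-1}(G\cdot a)/\Gh$. Your extra remarks (ordering of the removals, the central-face edge case) are refinements of steps the paper delegates to Lemma~\ref{remove}, not a different argument.
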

\begin{proof}
In Lemma~\ref{remove}, we take $O$ small enough such that
$\phi^{-1}(O)$ equivariantly deformation retracts to
$\phi^{-1}(G\cdot c)$. Then
$$\pi_1\big(\phi^{-1}(O)/\Gh\big)\cong\pi_1\big(\phi^{-1}(G\cdot
c)/\Gh\big).$$ Since
$$\pi_1\big(\phi^{-1}(O)/\Gh-\phi^{-1}(G\cdot
B)/\Gh\big)\cong\pi_1\big(\phi^{-1}(G\cdot a)/\Gh\big),$$ the claim
follows from Lemma~\ref{remove}.
\end{proof}

\begin{proof} [Proof of Theorem~\ref{hatG} for nonabelian $G$] We use
Lemma~\ref{Glink}, the cross section theorem and Lemma~\ref{link} to
inductively remove $\phi^{-1}(G\cdot \tau)/\Gh$ from $M/\Gh$
for the faces $\tau$'s other than the principal face $\tau^P$ which contain values of $\phi$.
 Assume we have now
$\pi_1(M/\Gh)\cong\pi_1\big(\phi^{-1}(G\cdot\tau^P)/\Gh \big)$. Then
by Lemma~\ref{Gisom}, $\pi_1\big(\phi^{-1}(G\cdot\tau^P)/\Gh
\big)\cong\pi_1\big( \phi^{-1}(G\cdot a)/\Gh\big)$ for $a\in\tau^P$.
The claim that the $\pi_1\big( \phi^{-1}(G\cdot a)/\Gh\big)$'s are
isomorphic for all $a\in\mbox{im}(\phi)$ follows from
Lemma~\ref{Gisom'}.
\end{proof}

\end{document}